\documentclass[11pt]{amsart}
\usepackage[T1]{fontenc}
\usepackage[utf8]{inputenc}
\usepackage{lmodern}
\usepackage[margin=1in]{geometry}
\usepackage{amsmath,amssymb,amsthm,mathtools}
\usepackage{tikz-cd}
\usepackage{xcolor}
\usepackage{microtype}
\usepackage[numbers,square]{natbib}
\usepackage[hidelinks]{hyperref}
\newcommand{\doi}[1]{\url{https://doi.org/#1}}
\usepackage{enumitem}
\usepackage{longtable,booktabs,array}
\usepackage{pdfcomment}

\newcommand{\Q}{\mathbb Q}

\newcommand{\Z}{\mathbb Z}

\newcommand{\Ql}{\mathbb Q_\ell}
\newcommand{\Zl}{\mathbb Z_\ell}
\newcommand{\Fl}{\mathbb F_\ell}
\newcommand{\cM}{\mathcal M}
\newcommand{\M}{\textcolor{black}{\mathcal M}}
\newcommand{\cX}{\textcolor{black}{\mathcal X}}
\newcommand{\cC}{\mathcal C}

\newcommand{\cH}{\mathcal H}
\newcommand{\cP}{\mathcal P}
\newcommand{\cG}{\mathcal G}
\newcommand{\cU}{\mathcal U}
\newcommand{\cA}{\mathcal A}
\newcommand{\p}{\mathfrak p}
\newcommand{\g}{\mathfrak g}
\renewcommand{\u}{\mathfrak u}
\newcommand{\LL}{\mathbb L}
\newcommand{\et}{\mathrm{\acute et}}
\newcommand{\cts}{\mathrm{cts}}
\newcommand{\nab}{\mathrm{nab}}
\newcommand{\geom}{\mathrm{geom}}
\DeclareMathOperator{\Gr}{Gr}
\DeclareMathOperator{\Sp}{Sp}
\DeclareMathOperator{\GSp}{GSp}

\DeclareMathOperator{\Ad}{Ad}

\DeclareMathOperator{\Lie}{Lie}
\DeclareMathOperator{\Spec}{Spec}
\DeclareMathOperator{\Sec}{Sec}
\DeclareMathOperator{\Conf}{Conf}

\DeclareMathOperator{\Hom}{Hom}
\DeclareMathOperator{\pr}{pr}
\newtheorem{theorem}{Theorem}[section]
\newtheorem*{theorem*}{Theorem}
\newtheorem{proposition}[theorem]{Proposition}
\newtheorem{lemma}[theorem]{Lemma}
\newtheorem{corollary}[theorem]{Corollary}
\theoremstyle{definition}

\theoremstyle{remark}
\newtheorem{remark}[theorem]{Remark}
\definecolor{revisionred}{RGB}{0,0,0}

\title{The section conjecture for generic pointed curves}
\author{Ma Luo}
\address{School of Mathematical Sciences, Key Laboratory of MEA
(Ministry of Education), Shanghai Key Laboratory of PMMP,
East China Normal University, Shanghai, China}
\email{mluo@math.ecnu.edu.cn}
\author{Tatsunari Watanabe}
\address{Mathematics Department, Embry-Riddle Aeronautical University,
Prescott, Arizona, USA}
\email{watanabt@erau.edu}
\thanks{Corresponding author: Tatsunari Watanabe
(\texttt{watanabt@erau.edu}).}
\subjclass[2020]{Primary 14H30; Secondary 14G05, 14H10, 14F35}
\thanks{The first author is supported in part by Shanghai Pilot Program for Basic Research and by Science and Technology Commission of Shanghai Municipality (No. 22DZ2229014).}
\date{}

\begin{document}

\begin{abstract}
Let $k/\mathbb Q$ be finitely generated, let $g\geq3$ and
$n\geq1$, and let $\mathcal M_{g,n/k}$ be the moduli stack
of smooth proper genus-$g$ curves with $n$ distinct ordered
markings. Put $K=k(\mathcal M_{g,n/k})$.
We prove Grothendieck's section conjecture for the generic
$n$-pointed genus-$g$ curve over $K$.
Over both the moduli stack and its function field, the
continuous sections of the arithmetic fundamental-group
extension of the universal complete curve are, up to
geometric conjugacy, exactly the $n$ tautological sections.
The same classification holds for every geometrically
pro-$\ell$ quotient, for every prime $\ell$.
Together with Hain's determination of the rational points,
these classifications give examples over fields finitely
generated over $\mathbb Q$ with any prescribed positive
number of rational points for which the full section
conjecture holds.
The proof builds on Hain's weighted-completion and
nonabelian cohomology methods, with a hyperelliptic
calculation providing the genus-three input.
An integral normalizer lemma and moving-mark arguments
pass from unipotent sections to geometrically pro-$\ell$
sections. Configuration spaces and Mochizuki's open Hom
theorem yield the full profinite classification, and
generic descent identifies the section classes over the
moduli stack with those over $K$.
\end{abstract}

\maketitle
\setcounter{tocdepth}{1}
\tableofcontents
\section{Introduction}
\label{sec:introduction}

\textcolor{black}{We prove Grothendieck's section conjecture for generic
pointed curves of genus $g\geq3$ over fields finitely
generated over $\Q$.} The result follows from a
classification of sections of the arithmetic fundamental-group
extension of the universal complete curve: the tautological
sections exhaust its section classes, both for the full
profinite geometric kernel and for each of its pro-$\ell$
quotients.

Grothendieck's section conjecture predicts that the rational
points of a smooth proper geometrically connected curve
of genus $g\geq2$ over a field finitely generated over $\Q$ are determined
by its arithmetic fundamental group. Each rational point
induces a section of the arithmetic fundamental-group
extension, well defined up to conjugation by the geometric
fundamental group. The conjecture asserts that this
construction is a bijection. For the universal curve, the
markings supply distinguished sections. The question is
whether any further group-theoretic section classes exist.

Let $k/\Q$ be finitely generated, let $g\geq3$ and $n\geq1$,
and write
\[
 \M=\cM_{g,n/k},\qquad \cX=\cC_{g,n/k},\qquad
 \pi:\cX\longrightarrow \M
\]
for the moduli stack of smooth proper genus-$g$ curves with
$n$ distinct ordered markings and its universal complete
curve. Thus $\cX$ includes the images of the tautological
sections $s_1,\ldots,s_n$; the additional-marking space
$\cM_{g,n+1/k}$ is their complement.
Put $K=k(\M)$ and $X_K=\cX\times_{\M}\Spec K$, and let
$x_1,\ldots,x_n\in X_K(K)$ be the marked points.
\textcolor{black}{The field $K$ is itself finitely generated
over $\Q$.}
\textcolor{black}{Throughout the paper, $\pi_1$ denotes the \'etale
fundamental group.}
Choose compatible geometric basepoints and put
\[
 \Delta=\textcolor{black}{\pi_1}(\cX_{\bar\eta}),\qquad
 E=\textcolor{black}{\pi_1}(\cX),\qquad
 \Gamma=\textcolor{black}{\pi_1}(\M).
\]
For a prime $\ell$, set
$N=\Delta^{(\ell)}$ and
$E_\ell=E/\ker(\Delta\to N)$.
We consider the exact sequences
\[
 1\longrightarrow\Delta\longrightarrow E
   \longrightarrow\Gamma\longrightarrow1,
 \qquad
 1\longrightarrow N\longrightarrow E_\ell
   \longrightarrow\Gamma\longrightarrow1.
\]
In the second sequence only the geometric kernel has been
replaced by its maximal pro-$\ell$ quotient.
Write $\Sec_{\M}(\Delta)$ and $\Sec_{\M}(N)$ for their continuous
section classes modulo conjugation by the indicated kernels.
The notation $\Sec_K(\Delta)$ and $\Sec_K(N)$ refers to the
corresponding extensions for $X_K/K$.

\begin{theorem*}[Main theorem]
Let $k/\Q$ be finitely generated, $g\geq3$, and $n\geq1$.
Every class in $\Sec_{\M}(\Delta)$ is represented by exactly
one of $s_1,\ldots,s_n$. For every prime $\ell$, the natural
maps in the commutative diagram
\[
\begin{tikzcd}[column sep=large]
 \Sec_{\M}(\Delta) \arrow[r] \arrow[d,"\mathrm{res}"'] &
 \Sec_{\M}(N) \arrow[d,"\mathrm{res}"] \\
 \Sec_K(\Delta) \arrow[r] & \Sec_K(N)
\end{tikzcd}
\]
are bijections. Each of these four sets consists of the
$n$ distinct tautological classes.
Moreover, the natural maps
\[
 X_K(K)\longrightarrow\Sec_K(\Delta)
 \longrightarrow\Sec_K(N)
\]
are bijections. In particular, Grothendieck's section
conjecture holds for $X_K/K$.
\end{theorem*}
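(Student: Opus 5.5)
The plan is to classify sections first at the coarsest level, the relative unipotent completion, then lift to the pro-$\ell$ quotient, then to the full profinite group, and finally descend from $\M$ to $K$.

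\textbf{Step 1 (unipotent level).} Over $\M$, with $\ell$ fixed, I will use Hain's weighted completion of $\Gamma$ relative to $\rho:\Gamma\to\GSp(H_{\Ql})$. I will also use the corresponding relative completion $\cG_E$ of $E$, whose kernel is the unipotent completion $\cP$ of $\Delta$. A section of $E\to\Gamma$ induces a section of $\cG_E\to\cG_\Gamma$. Hain's nonabelian cohomology computation then identifies the set of such sections, modulo $\cP(\Ql)$-conjugacy, with the $n$ tautological classes. This is done step by step along the lower central series. On $\Gr^W_{-1}$ the obstruction class lives in $H^1(\Gamma,H)$, which is spanned by the classes of $x_j-x_i$ and the canonical class. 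Weight considerations then force the $\Gr^W_{-2}$ level, that is $H^1(\Gamma,\Lambda^2H/\Q(1))$, to single out exactly the points $s_j$. For $g\ge4$ I will take this input from Hain's results. For $g=3$ the weight-two cohomology computation is not covered there. I will supply it by restricting to the hyperelliptic locus, where the relevant classes can be computed explicitly and are seen to be nonzero. This is the step I expect to be the main obstacle. What I would try first is the hyperelliptic-involution eigenspace decomposition of $H^1(\Gamma_{\rm hyp},\Lambda^2 H)$.

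\textbf{Step 2 (pro-$\ell$ and distinctness).} Let $s$ be a section of $E_\ell\to\Gamma$. By Step 1 it agrees with some $s_j$ after conjugation by an element of $\cP(\Ql)$. I need an integral conjugator in $N$, not just one in $\cP(\Ql)$. I will prove a normalizer lemma for this: the normalizer of $s_j(\Gamma)$ in $\cP(\Ql)$ meets $N$ appropriately, since the centralizer of the image of a finite-index subgroup of $\Gamma$ in $\cP$ is trivial. This should yield $s\sim s_j$ in $N$. Passing to open subgroups of $\Gamma$, that is to level covers, should handle the problem that conjugacy in $\cP(\Ql)$ only controls $s$ up to torsion. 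The tautological classes are pairwise distinct because their images in $H^1(\Gamma,H)$ differ by the nonzero classes $x_i-x_j$.

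\textbf{Step 3 (profinite and over $K$).} Given $s\in\Sec_{\M}(\Delta)$, its image for each $\ell$ is some $s_{j(\ell)}$. I will show that $j$ does not depend on $\ell$ by comparing the images in $H^1(\Gamma,\hat H)$. To get conjugacy in $\Delta$ itself, I will use moving-mark arguments. The section $s$ induces a section of the configuration-space extension $\pi_1(\cM_{g,n+1})\to\Gamma$ over the complement of the marks, or it is cuspidal at $s_j$. Mochizuki's Hom theorem, which makes open homomorphisms between these fundamental groups geometric, then rules out non-geometric behaviour, so $s$ equals $s_j$ up to $\Delta$-conjugacy. For the descent, $\pi_1(\Spec K)\to\Gamma$ is surjective, and a $K$-section spreads out over some open $U\subset\M$ by continuity and finiteness arguments. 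The same classification applies over $U$, because Steps 1 and 2 only use a finite-index or open substack, and the weighted completion is unchanged under removing codimension $\ge1$ loci up to Tate twists. This gives bijectivity of the restriction maps. Finally, Hain's theorem that $X_K(K)=\{x_1,\dots,x_n\}$ gives the bijection $X_K(K)\to\Sec_K(\Delta)\to\Sec_K(N)$.
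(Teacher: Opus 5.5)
Your outline follows the paper's architecture, but at each stage the step that does the real work is either missing or would not work as stated.

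\textbf{Step 2 (integrality).} If $\Phi\circ s=u^{-1}s_{j,P}u$ with $u\in P$, all you learn is that $u^{-1}\alpha_j(\gamma)(u)\in N$ for every $\gamma$. In other words, the coset $uN$ is $\Gamma$-fixed in $P/N$. Triviality of the centralizer of $s_{j,P}(\Gamma)$, or of a finite-index subgroup, makes $u$ unique but says nothing about $u\in N$. Passing to open subgroups only weakens the hypothesis, since fixed points can only grow. Indeed, the graded pieces $\Lambda_r\otimes\Ql/\Zl$ of $P/N$ can carry nonzero torsion invariants, because on their $\Sp(H)$-trivial parts $\Gamma$ acts through a power of the cyclotomic character. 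The paper uses two ingredients your plan lacks:
\begin{itemize}
\item The moving-mark family $C\setminus\{x_i:i\neq j\}\to\M$. Its fundamental group surjects onto $N$ and acts through $s_j$ by \emph{inner} automorphisms. Evaluating the cocycle there turns invariance of $uN$ into $u^{-1}Nu\subseteq N$.
\item The integral normalizer lemma $\{u\in P:u^{-1}Nu\subseteq N\}=N$. It is proved by correcting $u$ step by step along the lower central series, using $N\cap L^rP=L^rN$ (from Labute's integral presentation). It also uses that $\Gr_L^\bullet N\otimes\Fl$ has trivial center, which gives saturation: $[x,\Lambda_1]\subseteq\Lambda_{r+1}$ forces $x\in\Lambda_r$.
\end{itemize}

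\textbf{Step 1 (unipotent level).} This step also omits real work. Hain's calculation for $g\geq4$ constrains the weight-$(-1)$ part of a graded section. Nothing is selected in weight $-2$, since $H^1(\Gamma,\Lambda^2H/\Ql(1))=0$. After that, the arithmetic freedom at each even weight $-r\leq-6$ is a torsor under $H^1(G_k,T_r)$, where $T_r=(\Gr^W_{-r}\p)^{\Sp(H)}$. These parameters must be killed. The paper does this by showing that the projected obstruction equals $c_j\smile\inf(t)$, and that cup product followed by bracket into $H^2(\Gamma,\Gr^W_{-r-1}\p)$ is injective. In genus three, the extra candidate with all $a_i=0$ satisfies Hain's relations $a_i^2=a_i$ and $a_ia_{i'}=0$. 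The paper removes it with the weight-$(-4)$ identity $\sum_ia_i^4=1$ on the hyperelliptic locus, not with a weight-two computation.

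\textbf{Step 3 (full profinite).} You never reach the setting of Mochizuki's theorem. It concerns \emph{open} $G_k$-homomorphisms $\pi_1(Y)\to\pi_1(Z)$, with $Y$ a smooth variety and $Z$ a hyperbolic curve over a sub-$\ell$-adic field. Neither $s$ nor a lift to $\pi_1(\mathcal M_{g,n+1/k})$ has this form. Your dichotomy ``lifts to the open curve or is cuspidal'' is also unjustified. The paper proceeds as follows.
\begin{itemize}
\item It pulls $s$ back along $\Conf_n(C)\to\M$ for a fixed curve $C/k$, getting a $G_k$-homomorphism $\pi_1(\Conf_n(C))\to\pi_1(C)$. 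It extends this to $f\colon\pi_1(C^n)\to\pi_1(C)$.
\item It proves that $f$ is open, and this is where all primes are used. The pro-$\ell$ classification at every $\ell$ makes $f\circ\delta_*$ act trivially on $H^1(\Delta,\Fl)$ for all $\ell$, so it is an automorphism.
\item Only then does the bijection between full and pro-$\ell_0$ open Hom sets upgrade pro-$\ell_0$ conjugacy with $(\pr_j)_*$ to $\Delta$-conjugacy.
\item This conjugacy holds only on the image of $\pi_1(\Conf_n(C))$. Spreading it to all of $\Gamma$ uses three facts: the geometric configuration subgroup is normal (it is the kernel of $\Gamma\to\pi_1(\mathcal M_{g/k})$); its $s_j$-action contains $\operatorname{Inn}(\Delta)$; and $Z(\Delta)=1$.
\end{itemize}
Your argument that $j(\ell)$ is independent of $\ell$ also fails. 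Since $H^1(\Gamma,\hat H)=\prod_\ell H^1(\Gamma,H_{\Zl})$, comparing images there relates nothing across primes. In the paper, $j(\ell)=j$ is a consequence of the full conjugacy.

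\textbf{Descent.} Spreading out does not work as stated. $\pi_1(\cX)$ surjects onto $G_k$, so it is not topologically finitely generated. Continuity therefore does not make a $K$-section factor through $\pi_1(U)$ for a single open $U$. Moreover, deleting a divisor genuinely changes $\pi_1$ and its weighted completion. The missing input is Stix's theorem that sections have trivial ramification at codimension-one points of good reduction whose residue field is finitely generated over $\Q$. Together with purity of the branch locus, and a level cover for the stack, it shows directly that $G_K\to\pi_1(X_K)\to\pi_1(\cX)$ kills $\ker(G_K\to\Gamma)$.
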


The theorem combines Theorems~\ref{thm:proell-classification}
and~\ref{thm:profinite-classification} with
Corollary~\ref{cor:generic-section-conjecture}.
No level structure is required on $\M$; extensions to full
symplectic level covers and certain topologically ample
families are given in Remark~\ref{rem:ample-families}.

\textcolor{black}{To the authors' knowledge, prior to the
recent announcement of a proof of the $p$-adic section
conjecture by OpenAI~\cite{OpenAIPadic}, no examples had
been established of smooth proper hyperbolic curves
over fields finitely generated over $\Q$ with a nonempty
set of rational points for which the full section
conjecture holds.
Our results provide such examples in every genus
$g\geq3$ and with any prescribed positive number
$n$ of rational points.}

Hain proved that $X_K(K)=\{x_1,\ldots,x_n\}$ for $g\geq3$
\textcolor{black}{\cite[\textcolor{black}{Thm.}~1, p.~711]{HainRational}. He also established the}
unipotent section conjecture for the generic pointed curve
when $g\geq5$, under the hypotheses of his Theorem~3, and
proved nonsplitting of the full fundamental-group extension
for the unmarked generic curve when $g\geq5$
\cite[\textcolor{black}{Thms.}~2--3]{HainRational}.
Our main result classifies the full profinite and
geometrically pro-$\ell$ section classes in the pointed
case, including genera three and four. The rational-point count is Hain's.
For the section classifications, we first use his methods,
together with the genus-three input described below, to
classify unipotent sections over the moduli stack for
$g\geq3$. We then establish the geometrically pro-$\ell$
and full profinite classifications over the moduli stack
and use generic descent to obtain the corresponding
classifications over $K$.

\textcolor{black}{The geometrically pro-$\ell$ section
conjecture fails in general: Hoshi constructed proper
hyperbolic curves over number fields admitting geometrically
pro-$p$ sections that do not arise from rational points,
including examples with infinitely many section classes
\cite[Thms.~A and~B]{HoshiNongeometric}.
For the generic pointed curves considered here, however,
every geometrically pro-$\ell$ section of $X_K/K$ arises
from a marked point, for every prime $\ell$.}

We use weighted completion and Hain's graded description
of nonabelian cohomology
\cite{HainMatsumoto,HainRational,HainNab} to compute the
unipotent section classes over the moduli stack.
Hain's geometric calculation determines the possible
weight-$-1$ maps for $g\geq4$; in genus three, restriction
to the hyperelliptic locus excludes the remaining candidate
\cite{WatanabeHyper2019,WatanabeHyper2025,LuoWatanabeBirman}.
His cup-product obstruction eliminates the higher-weight
arithmetic classes, leaving exactly the $n$ tautological
classes.

To pass to pro-$\ell$ conjugacy, we prove an integral
normalizer lemma. Let $N$ be a pro-$\ell$ surface group
of genus at least two, and let $P$ be the group of
$\Ql$-points of its unipotent completion. The natural
map $N\to P$ is a closed topological embedding, and
\[
 u^{-1}Nu\subseteq N\quad\Longrightarrow\quad u\in N
 \qquad(u\in P).
\]
The proof uses the integral lower central series and
the triviality of the center of the associated graded
Lie algebra modulo $\ell$.
Moving the $j$th marking realizes every inner automorphism
of $N$, so a unipotent conjugator relating a section to
$s_j$ satisfies this condition. This gives the pro-$\ell$
classification for every prime $\ell$.

For the full profinite classification, we pull back to
the configuration space of a fixed curve and extend across
the collision diagonals. An all-prime cohomological argument
along the diagonal supplies the openness required by
Mochizuki's open Hom theorem
\cite[\textcolor{black}{Thm.}~A]{MochizukiLocal}, which then lifts conjugacy
from a single geometrically pro-$\ell$ quotient to the full
fundamental group. Normality of the configuration subgroup
and the triviality of the geometric kernel's center
give the corresponding conjugacy over $\M$.

Finally, good reduction and purity identify section classes
over $\M$ with those over its generic point. We follow
Stix's specialization method \cite{StixSections} and its
use in \cite{WatanabePositive}, treating both full profinite
and geometrically pro-$\ell$ sections.

\textcolor{black}{The classifications also hold over full level covers and over
smooth bases whose classifying morphisms induce isomorphisms on
geometric fundamental groups with such a cover. This includes
Hain's generic ample linear sections of dimension at least two
\cite[\textcolor{black}{Prop.}~11.3]{HainRational}. These are examples of his
topologically ample families; Remark~\ref{rem:ample-families}
specifies the hypothesis used for this extension.}

Sections~\ref{sec:conventions}
and~\ref{sec:weighted-completion} introduce the fundamental
groups and completions. Section~\ref{sec:integral-conjugation}
proves the integral normalizer lemma.
Sections~\ref{sec:moving} and~\ref{sec:unipotent-sections}
treat moving markings and the unipotent classification.
\textcolor{black}{Section~\ref{sec:descent} establishes
generic descent. Section~\ref{sec:full} develops the
ingredients for the full profinite classification,
and Section~\ref{sec:main-proofs} proves the main
results and their consequences.}

\section{Fundamental groups and section conventions}
\label{sec:conventions}

\textcolor{black}{We fix the fundamental-group extensions and conjugacy
conventions used throughout the paper.}

\subsection{The universal curve and fundamental-group extensions}
\label{subsec:universal-extensions}

Let $k$ be a field finitely generated over $\Q$, and let
$g\geq2$ and $n\geq0$. Write
\[
 \M=\cM_{g,n/k},\qquad \cX=\cC_{g,n/k},\qquad
 \pi:\cX\longrightarrow \M
\]
for the moduli stack of smooth proper genus-$g$ curves with
$n$ distinct ordered markings and its universal complete curve.
For $n\geq1$, the markings define tautological sections
$s_1,\ldots,s_n$ of $\pi$. The additional-marking space
$\cM_{g,n+1/k}$ is the open substack of $\cX$ obtained by removing
their images; when $n=0$, it is the whole of $\cX$.

Fix an algebraic closure $\bar k$ of $k$ and a geometric point
$\bar\eta$ of $\M_{\bar k}$. Let $C=\cX_{\bar\eta}$ be the
corresponding geometric fiber, and choose a geometric point
$\bar x$ of $C$. Put
\[
 \Delta=\textcolor{black}{\pi_1}(C,\bar x),\qquad
 E=\textcolor{black}{\pi_1}(\cX,\bar x),\qquad
 \Gamma=\textcolor{black}{\pi_1}(\M,\bar\eta).
\]
All homomorphisms between profinite groups are understood to
be continuous.

\begin{lemma}\label{lem:universal-homotopy}
For every $n\geq0$, the universal complete curve induces a
short exact sequence
\begin{equation}\label{eq:profinite-extension}
 1\longrightarrow\Delta\longrightarrow E
 \xrightarrow{\pi_*}\Gamma\longrightarrow1.
\end{equation}
The corresponding sequence after base change to $\bar k$
is also exact.
\end{lemma}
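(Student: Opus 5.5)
The plan is to apply the homotopy exact sequence for proper smooth morphisms with geometrically connected fibers, SGA1 Exp.~X Cor.~1.4, in its version for Deligne--Mumford stacks (Zoonekynd, or Noohi's treatment of fundamental groups of algebraic stacks). The key inputs are these.

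First, $\pi:\cX\to\M$ is representable, proper and smooth. Its fibers are smooth proper curves of genus $g$, hence geometrically connected. This holds for every $n\geq0$, because $\cX$ is the universal complete curve and the markings have not been removed.

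Second, $\M$ is a connected, locally noetherian Deligne--Mumford stack. It is normal, being smooth over $k$, and it is geometrically connected over $k$.

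For stacks we reduce to schemes. Choose a finite \'etale Galois cover $\M'\to\M$ by a scheme, for instance a full level-$m$ structure with $m\geq3$ after adjoining $\mu_m$ to $k$; pulling back gives $\cX'=\cX\times_{\M}\M'$. For the proper smooth morphism of schemes $\cX'\to\M'$, SGA1 gives the exact sequence
\[
 \pi_1(C)\to\pi_1(\cX')\to\pi_1(\M')\to1.
\]
Left exactness, i.e.\ injectivity of $\Delta\to E$, holds for curves of genus $g\geq2$. One argument uses a section: after a further finite \'etale cover, or after passing to $\cM_{g,1}$, there is a section. A cleaner argument is to quote the standard result that in characteristic zero, for a family of hyperbolic curves, the kernel is the full $\pi_1$ of the fiber. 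Concretely, one compares with topology: $\pi_1^{\mathrm{top}}(C^{an})\to\pi_1^{\mathrm{top}}(\cX'^{an})$ is injective by the long exact homotopy sequence of the fibration, since $\pi_2$ of the base vanishes up to the relevant aspherical covers and the surface group is center-free. Good residual finiteness then transfers injectivity to the profinite completions. That transfer needs the base fundamental group to be good in Serre's sense, which holds for mapping class groups only up to known subtleties. So the more robust route I would take is: Grothendieck's argument gives exactness except on the left, and the outer representation $\Gamma\to\mathrm{Out}(\Delta)$ together with the triviality of the center of $\Delta$ yields injectivity. Indeed, $E\to\Gamma\times_{\mathrm{Out}(\Delta)}\mathrm{Aut}(\Delta)$ identifies $\ker(\Delta\to E)$ with something central in $\Delta$, hence trivial.

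Next we pass from $\M'$ to $\M$. The covers $\cX'\to\cX$ and $\M'\to\M$ are Galois with the same group $G$, giving a morphism of extensions of $G$. A five-lemma style diagram chase then yields exactness of \eqref{eq:profinite-extension}.

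For the $\bar k$-version, we repeat the argument over $\bar k$, since $\M_{\bar k}$ is still connected. Alternatively, combine with the exact sequences $1\to\pi_1(\M_{\bar k})\to\Gamma\to G_k\to1$ and $1\to\pi_1(\cX_{\bar k})\to E\to G_k\to1$, which hold because both stacks are geometrically connected and of finite type over $k$ of characteristic zero.

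The main obstacle is injectivity on the left. I expect to handle it through the center-free property of $\Delta$ for $g\geq2$, applied to the kernel of $\Delta\to E$, which is normal in $\Delta$ and acted on trivially by the outer action. Making this rigorous requires identifying that kernel as central. I would first try the standard reference (Stix, \emph{Rational points and arithmetic of fundamental groups}, Prop.~on the fundamental short exact sequence for hyperbolic families) and fall back on the topological comparison via Riemann existence if needed.
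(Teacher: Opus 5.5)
Your treatment of exactness at $E$ and $\Gamma$ is fine and matches the paper. You can apply the homotopy sequence to the representable proper smooth $\pi$, or use your reduction to a finite \'etale Galois scheme cover $\mathcal M'\to\M$, which works because $\pi_1(\mathcal X')\to E$ is an open embedding. The gap is the injectivity of $\Delta\to E$, and the argument you call the ``more robust route'' is circular. Conjugation inside $E$ only lets $E$ act on the \emph{image} $\Delta/K$ of $\Delta$, where $K=\ker(\Delta\to E)$. The outer representation $\Gamma\to\mathrm{Out}(\Delta)$ is normally read off from the very exact sequence you are trying to prove. So there is no map $E\to\Gamma\times_{\mathrm{Out}(\Delta)}\mathrm{Aut}(\Delta)$ until an action of $E$ on $\Delta$ itself has been constructed by other means. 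Once you have a homomorphism $E\to\mathrm{Aut}(\Delta)$ whose restriction along $\Delta\to E$ is the inner action, $K\subseteq Z(\Delta)=1$ is immediate. Producing that homomorphism is the whole content; centerlessness alone gives nothing. The section route does not work either. For $n=0$ there is in general no section even after finite \'etale base change. Passing to $\cM_{g,1}$ goes the wrong way, since injectivity for a pulled-back family does not imply it for the original one. In any case, a section only splits $E\to\Gamma$ and by itself says nothing about $K$.

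The paper avoids this by citing Hain's fiber-injectivity lemma \cite[Lem.~4.1]{HainRational} for the family over the level-$3$ fine moduli scheme $T=\cM_{g,n/k'}[3]$. It then composes with $\pi_1(X_T)\hookrightarrow E$, which is injective because $X_T\to\cX$ is a connected finite \'etale cover. Your topological fallback would also close the gap, and it needs no goodness of the base group. Over $\mathbb C$ the discrete sequence $1\to\Pi_g\to\pi_1^{\mathrm{top}}((\mathcal X')^{\mathrm{an}})\to\pi_1^{\mathrm{top}}((\mathcal M')^{\mathrm{an}})\to1$ is exact. Hence $\pi_1^{\mathrm{top}}((\mathcal X')^{\mathrm{an}})$ genuinely acts on $\Pi_g$, and so on $\widehat{\Pi_g}$ through the profinite group $\mathrm{Aut}(\widehat{\Pi_g})$. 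This action extends continuously to $\pi_1(\mathcal X'_{\mathbb C})$, and its restriction to $\widehat{\Pi_g}\cong\Delta$ is inner. Its kernel therefore lies in $Z(\Delta)=1$ by Lemma~\ref{lem:profinite-center}. The remaining inputs are Riemann existence, invariance of $\pi_1$ under extension of algebraically closed fields in characteristic zero, and $\pi_1(\mathcal X'_{\bar k})\hookrightarrow\pi_1(\mathcal X')\hookrightarrow E$. Together they give the injectivity both over $k$ and over $\bar k$. This is exactly the independent source of the action that your outer-representation argument lacks.
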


\begin{proof}
The morphism $\pi$ is representable, smooth and proper,
with geometrically connected fibers. Its homotopy sequence
is therefore exact at $E$ and $\Gamma$
\cite[Secs.~A.1 and~A.4, pp.~99 and~102]{NoohiFundamental}.

For injectivity on the left, put $k'=k(\mu_3)$ and choose
a primitive cube root of unity. The corresponding symplectic
level-$3$ cover
\[
 T=\cM_{g,n/k'}[3]\longrightarrow \M
\]
is finite \'etale, and $T$ is a smooth, geometrically connected
fine moduli scheme over $k'$
\cite[\textcolor{black}{Sec.}~5.1.2, p.~719]{HainRational}.
With compatible basepoints, Hain's fiber-injectivity lemma
\cite[\textcolor{black}{Lem.}~4.1, p.~716]{HainRational} applied to
$X_T=\cX\times_{\M} T\to T$ gives
\[
 \Delta\hookrightarrow\textcolor{black}{\pi_1}(X_T)\hookrightarrow E.
\]
The second map is injective because $X_T\to \cX$ is a connected
finite \'etale cover. Their composite is the original fiber
homomorphism, proving its injectivity. The same argument
applies after base change to $\bar k$.
\end{proof}

Write
\[
 \textcolor{black}{\Gamma^{\geom}}=\textcolor{black}{\pi_1}(\M_{\bar k},\bar\eta),
 \qquad G_k=\operatorname{Gal}(\bar k/k).
\]
The arithmetic fundamental-group sequence of the base is
\begin{equation}\label{eq:arithmetic-base-extension}
 1\longrightarrow\textcolor{black}{\Gamma^{\geom}}\longrightarrow\Gamma
 \longrightarrow G_k\longrightarrow1;
\end{equation}
see \cite[\textcolor{black}{Sec.}~5, p.~717]{HainRational}.
\subsection{Geometrically pro-\texorpdfstring{$\ell$}{ell}
and prounipotent kernels}
\label{subsec:geometric-kernels}

Fix a prime $\ell$. Let
\[
 N=\Delta^{(\ell)},\qquad
 R_\ell=\ker(\Delta\longrightarrow N),\qquad
 E_\ell=E/R_\ell,
\]
where $\Delta^{(\ell)}$ denotes the maximal pro-$\ell$
quotient of $\Delta$. The subgroup $R_\ell$ is closed and
characteristic in $\Delta$, hence normal in $E$.
Quotienting \eqref{eq:profinite-extension} by $R_\ell$ gives
the geometrically pro-$\ell$ extension
\begin{equation}\label{eq:proell-extension}
 1\longrightarrow N\longrightarrow E_\ell
 \longrightarrow\Gamma\longrightarrow1.
\end{equation}
Only the geometric kernel has been replaced by its pro-$\ell$
quotient; the quotient group remains $\Gamma$.

Let $\cP$ be the continuous unipotent completion of $\Delta$
over $\Ql$, and put
\[
 P=\cP(\Ql),\qquad \p=\Lie\cP.
\]
If $\cP=\varprojlim_\alpha U_\alpha$ is expressed as an
inverse limit of finite-dimensional unipotent $\Ql$-groups,
we give
\[
 P=\varprojlim_\alpha U_\alpha(\Ql)
\]
the inverse-limit topology, with each $U_\alpha(\Ql)$ carrying
its $\ell$-adic topology. All homomorphisms to $P$ are understood
to be continuous for this topology.

The completion homomorphism factors uniquely as
\[
 \Delta\longrightarrow N\xrightarrow{i}P.
\]
Indeed, its image in each $U_\alpha(\Ql)$ is compact and
therefore pro-$\ell$, so each quotient map factors continuously
through $N$. These factorizations are compatible with the
transition maps. Thus $\cP$ is also the continuous unipotent
completion of $N$; see
\cite[\textcolor{black}{App.}~A.2, pp.~159--160]{HainMatsumoto}.
Under the characteristic-zero comparison with a compact
genus-$g$ surface, this agrees with the unipotent completion
used in Subsection~\ref{subsec:integral-lcs}, by
\cite[\textcolor{black}{Thm.}~A.6, p.~161]{HainMatsumoto}.
Proposition~\ref{prop:intersection} will show that $i$ is a
closed topological embedding.

Section~\ref{sec:weighted-completion} constructs the prounipotent extension
\begin{equation}\label{eq:unipotent-extension}
 1\longrightarrow P\longrightarrow E_P
 \longrightarrow\Gamma\longrightarrow1
\end{equation}
by pulling back the weighted-completion extension, following
\cite[\textcolor{black}{Prop.}~15.1]{HainRational}.
It also constructs the continuous comparison homomorphism
$\Phi:E_\ell\to E_P$, with
\[
\begin{tikzcd}[column sep=small]
1 \arrow[r] & N \arrow[r] \arrow[d,"i"] &
E_\ell \arrow[r] \arrow[d,"\Phi"] &
\Gamma \arrow[r] \arrow[d,equal] & 1 \\
1 \arrow[r] & P \arrow[r] &
E_P \arrow[r] & \Gamma \arrow[r] & 1.
\end{tikzcd}
\]
The topology on $E_P$ is the pullback topology from that
construction. Neither extension requires the choice of a
section.

\subsection{Section classes and comparison maps}
\label{subsec:section-classes}

For an extension of topological groups
$1\to A\to B\to\Gamma\to1$, a section is a continuous
homomorphism $s:\Gamma\to B$ whose composite with
$B\to\Gamma$ is the identity. We identify two sections when
they are conjugate by an element of the kernel $A$.
With this convention, write
\[
\begin{aligned}
 \Sec_{\M}(\Delta)&=
 \{\text{continuous sections of }E\to\Gamma\}/\Delta,\\
 \Sec_{\M}(N)&=
 \{\text{continuous sections of }E_\ell\to\Gamma\}/N,\\
 \Sec_{\M}(P)&=
 \{\text{continuous sections of }E_P\to\Gamma\}/P.
\end{aligned}
\]
The quotient homomorphism and $\Phi$ induce comparison maps
\begin{equation}\label{eq:section-comparisons}
 \Sec_{\M}(\Delta)\longrightarrow\Sec_{\M}(N)
 \xrightarrow{\Phi_{\M}}\Sec_{\M}(P).
\end{equation}
We also write
\[
 H^1_{\nab}(\Gamma,\cP)(\Ql)=\Sec_{\M}(P)
\]
for the nonabelian cohomology set attached to
\eqref{eq:unipotent-extension}. This notation refers to the
specified extension and does not presuppose a splitting.

An algebraic section of $\cX\to \M$ over $k$ induces a class in
$\Sec_{\M}(\Delta)$: choosing a path in the geometric fiber
identifies its value at $\bar\eta$ with the chosen basepoint,
and changing this path changes the induced section by
$\Delta$-conjugacy. We use the same notation for an algebraic
section and a chosen induced section of the fundamental-group
extension. In particular, the tautological sections give
classes in all three sets in \eqref{eq:section-comparisons}.

The subscript $\M$ indicates sections over the moduli stack.
For $g\geq3$, its generic point is $\Spec K$, where $K=k(\M)$,
since the generic curve has no automorphisms;
see \cite[\textcolor{black}{Prop.}~5.1]{HainRational}.
Sections over this generic point concern the extensions
pulled back along $G_K\to\Gamma$, with compatible basepoints, where
$G_K=\operatorname{Gal}(\bar K/K)$, and will be denoted by
$\Sec_K(\Delta)$, $\Sec_K(N)$ and $\Sec_K(P)$.
The relation between these sets and the corresponding sets
over $\M$ is treated in Section~\ref{sec:descent}.

\subsection{Difference cocycles and conjugacy}
\label{subsec:difference-cocycles}

\textcolor{black}{To compare a section with a tautological one, we express
their difference as a cocycle.}
Assume $n\geq1$ and fix $j\in\{1,\ldots,n\}$.
Choose the section $s_j:\Gamma\to E_\ell$ induced by the
$j$th marking, and put $s_{j,P}=\Phi\circ s_j$.
Conjugation by these sections defines actions, both denoted
by $\alpha_j$, on $N$ and $P$:
\[
\begin{aligned}
 \alpha_j(\gamma)(h)&=s_j(\gamma)h s_j(\gamma)^{-1}
     &&(h\in N),\\
 \alpha_j(\gamma)(p)&=s_{j,P}(\gamma)p s_{j,P}(\gamma)^{-1}
     &&(p\in P).
\end{aligned}
\]
Since $\Phi|_N=i$, these actions satisfy
\[
 i\bigl(\alpha_j(\gamma)(h)\bigr)
 =\alpha_j(\gamma)\bigl(i(h)\bigr).
\]

If $s:\Gamma\to E_\ell$ is another section, define
\[
 z(\gamma)=s(\gamma)s_j(\gamma)^{-1}.
\]
The two factors have the same image in $\Gamma$, so
$z(\gamma)\in N$. The homomorphism property of $s$ gives
the cocycle identity
\begin{equation}\label{eq:difference-cocycle}
 z(\gamma\textcolor{black}{\gamma'})
 =z(\gamma)\alpha_j(\gamma)\bigl(z(\textcolor{black}{\gamma'})\bigr)
 \qquad(\gamma,\textcolor{black}{\gamma'}\in\Gamma).
\end{equation}
Conversely, every continuous map $z:\Gamma\to N$ satisfying
\eqref{eq:difference-cocycle} defines a section by
$s(\gamma)=z(\gamma)s_j(\gamma)$.

Our conjugacy convention is
$s^v(\gamma)=v^{-1}s(\gamma)v$ for $v\in N$.
Keeping the reference section $s_j$ fixed, the cocycle
associated with $s^v$ is
\[
\begin{aligned}
 z^v(\gamma)
 &=s^v(\gamma)s_j(\gamma)^{-1}\\
 &=v^{-1}z(\gamma)s_j(\gamma)v\,s_j(\gamma)^{-1}\\
 &=v^{-1}z(\gamma)\alpha_j(\gamma)(v),
\end{aligned}
\]
where we used $s(\gamma)=z(\gamma)s_j(\gamma)$ and the
definition of $\alpha_j$.
Thus $[s]=[s_j]$ in $\Sec_{\M}(N)$ if and only if
$s(\gamma)=v^{-1}s_j(\gamma)v$ for one element $v\in N$
and every $\gamma\in\Gamma$. Equivalently,
\[
 z(\gamma)=v^{-1}\alpha_j(\gamma)(v)
 \qquad\text{for every }\gamma\in\Gamma.
\]
The same conventions apply with $P$ and $s_{j,P}$ in place
of $N$ and $s_j$. In particular,
$\Phi_{\M}([s])=[s_{j,P}]$ if and only if there is one
element $u\in P$ such that
\begin{equation}\label{eq:comparison-coboundary}
 i\bigl(z(\gamma)\bigr)
 =u^{-1}\alpha_j(\gamma)(u)
 \qquad\text{for every }\gamma\in\Gamma.
\end{equation}
Indeed, this identity is equivalent to
$\Phi(s(\gamma))=u^{-1}s_{j,P}(\gamma)u$.
Section~\ref{sec:moving} will show that, in this situation,
the conjugating element can be taken in $i(N)$.
\section{Weighted completion and graded sections}
\label{sec:weighted-completion}

Retain the notation of Section~\ref{sec:conventions}.
Thus $k/\Q$ is finitely generated, $g\geq2$, $n\geq0$,
and $\ell$ is a prime. We recall the completion constructions
and section comparison used below, following
\cite{HainMatsumoto,HainRational,HainNab}.
The surface-group input used here is proved in
Section~\ref{sec:integral-conjugation}.

\subsection{Monodromy and weighted completion}
\label{subsec:weighted-monodromy}

Put
\[
 H=H^1_{\et}(C,\Ql(1))\cong H_1(\p),
 \qquad R=\GSp(H).
\]
\textcolor{black}{The cup product gives the alternating pairing
$\langle-,-\rangle:\Lambda^2H\to\Ql(1)$.} The composition of the monodromy representation
$\rho_{\M}:\Gamma\to R(\Ql)$ with the similitude character
$R(\Ql)\to\Ql^\times$ is the $\ell$-adic cyclotomic character. Its restriction to
$\textcolor{black}{\Gamma^{\geom}}$ has Zariski-dense image in $\Sp(H)$.
Moreover, the cyclotomic character of $k$ has infinite image:
the algebraic closure of $\Q$ in $k$ is a number field,
and $k$ is regular over that field.
Consequently $\rho_{\M}$ is Zariski dense in $R$;
see \cite[\textcolor{black}{Sec.}~8, p.~728]{HainRational}.
We use the central cocharacter
\[
 \omega:\mathbb G_m\longrightarrow R,
 \qquad t\longmapsto t^{-1}\operatorname{id}_H.
\]
Thus $H$ has weight $-1$ and $\Ql(1)$ has weight $-2$.

An extension of $R$ by a prounipotent group is
\emph{negatively weighted} if the weights of the induced
$\omega$-action on the abelianization of its unipotent
radical are negative. The weighted completion of
$(\Gamma,\rho_{\M},\omega)$ is an extension
\[
 1\longrightarrow\cU_{\M}\longrightarrow\cG_{\M}
 \longrightarrow R\longrightarrow1
\]
together with a continuous Zariski-dense lift
$\widetilde\rho_{\M}:\Gamma\to\cG_{\M}(\Ql)$ of $\rho_{\M}$.
It is universal among continuous lifts of $\rho_{\M}$ to
negatively weighted extensions of $R$.
Define $\cG_{\cX}$ similarly using
$\rho_{\cX}=\rho_{\M}\circ\pi_*:E\to R(\Ql)$, and write
$\widetilde\rho_{\cX}:E\to\cG_{\cX}(\Ql)$ for its completion map.
For $T=\M,\cX$, put
\[
 \g_T=\Lie\cG_T,\qquad \u_T=\Lie\cU_T.
\]
These definitions are those of
\cite[\textcolor{black}{Def.}~4.1]{HainMatsumoto} and
\cite[\textcolor{black}{Sec.}~6.4]{HainRational}.

Every finite-dimensional representation of a negatively
weighted extension has a natural weight filtration.
It can be constructed by choosing a lift of $\omega$ and
summing its eigenspaces of weights at most the given index;
the filtration is independent of the lift.
Morphisms are strict for this filtration, and
$\Gr^W_\bullet$ is exact. These statements extend to
pro-objects; see \cite[\textcolor{black}{Thm.}~3.12]{HainMatsumoto}.
In particular,
\[
 W_0\g_T=\g_T,\qquad W_{-1}\g_T=\u_T,
 \qquad \Gr^W_0\g_T=\Lie R,
\]
and the associated graded Lie algebras carry natural
$R$-actions. Throughout, Lie algebra homomorphisms and
graded sections are continuous morphisms of the relevant
pro-objects.

\textcolor{black}{We will also use the cohomological comparison
of Hain--Matsumoto
\cite[\textcolor{black}{Thms.}~4.6 and~4.9]{HainMatsumoto}.}
For a finite-dimensional
$R$-module $V$ of negative weight, it gives
\begin{equation}\label{eq:weighted-cohomology-comparison}
 \begin{aligned}
 H^1(\u_{\M},V)^R&\xrightarrow{\sim}H^1(\Gamma,V),\\
 H^2(\u_{\M},V)^R&\lhook\joinrel\longrightarrow H^2(\Gamma,V).
 \end{aligned}
\end{equation}
Here $\u_{\M}$ acts trivially on $V$, Lie algebra cohomology
is continuous, and the right-hand side is continuous
cohomology of the profinite group.

\subsection{Weighted and geometric completion sequences}
\label{subsec:weighted-curve-sequences}

We compare the weighted completions of the arithmetic
fundamental groups with the relative completions of their
geometric kernels. We first collect the properties of
weighted completion used below.

\begin{proposition}[{\cite[\textcolor{black}{Thms.}~3.9 and~3.12,
\textcolor{black}{Props.}~4.13--4.14]{HainMatsumoto}}]
\label{prop:weighted-properties}
\leavevmode
\begin{enumerate}
\renewcommand{\labelenumi}{\textup{(\roman{enumi})}}

\item
Weighted completion is functorial for continuous
homomorphisms compatible with the reductive representations
and central cocharacters.

\item
Every finite-dimensional representation $V$ of a negatively
weighted extension has a canonical weight filtration.
The action on $\Gr^W_mV$ factors through the reductive
quotient and has weight $m$.
Every equivariant homomorphism $f:V\to V'$ is strict:
\[
 f(W_mV)=f(V)\cap W_mV'.
\]
Thus $\Gr^W_\bullet$ is exact.
These properties extend to projective limits of
finite-dimensional representations.

\item
Given an exact sequence of profinite groups
\[
 1\longrightarrow\Pi_0\longrightarrow\widetilde\Pi
 \longrightarrow\Pi\longrightarrow1,
\]
let $\widetilde{\mathcal G}$ and $\mathcal G$ be the
weighted completions for a continuous Zariski-dense
representation of $\Pi$ and its pullback, using the
same central cocharacter. Then
\[
 (\Pi_0)^{\mathrm{un}}_{/\Ql}
 \longrightarrow\widetilde{\mathcal G}
 \longrightarrow\mathcal G\longrightarrow1
\]
is exact, where $(\Pi_0)^{\mathrm{un}}_{/\Ql}$ is the
continuous unipotent completion of $\Pi_0$.
\end{enumerate}
\end{proposition}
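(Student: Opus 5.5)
This proposition is quoted from Hain--Matsumoto, so the plan is to check that each item is in the cited source and fits our setting, not to reprove weighted completion.

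For (i), I would argue from the universal property. Let $\phi:\Pi_1\to\Pi_2$ be a continuous homomorphism. Suppose there is a homomorphism $R_1\to R_2$ of reductive groups that intertwines the representations and carries the central cocharacter of $R_1$ to that of $R_2$. Then the composite $\Pi_1\to\mathcal G_2(\Ql)$ is a continuous lift of $\rho_1$ to a negatively weighted extension of $R_1$. This holds because the pullback of $\mathcal G_2$ along $R_1\to R_2$ has unipotent radical $\mathcal U_2$, with the same $\omega$-weights. Universality then gives a unique morphism $\mathcal G_1\to\mathcal G_2$. This is \cite[Thm.~3.9]{HainMatsumoto}.

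For (ii), the approach is as follows.
\begin{itemize}
\item Fix a Levi lift $\tilde\omega$ of $\omega$; it exists by Levi decomposition of the proalgebraic group.
\item Define $W_mV$ as the sum of the $\tilde\omega$-eigenspaces of weight at most $m$.
\item The unipotent radical lowers weight, since $\mathfrak u$ has negative weights. Hence $W_mV$ is stable under the whole group, and $\Gr^W_mV$ is a representation on which $\mathcal U$ acts trivially, with weight $m$.
\item Independence of the lift follows because any two Levi lifts are conjugate by an element of $\mathcal U$, which preserves the filtration.
\item Strictness follows because an equivariant map commutes with $\tilde\omega$, so it respects the eigenspace decompositions. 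Exactness of $\Gr^W_\bullet$ is then formal.
\item Passing to projective limits uses the exactness of $\varprojlim$ on finite-dimensional $\Ql$-vector spaces (the Mittag-Leffler condition).
\end{itemize}
This is \cite[Thm.~3.12]{HainMatsumoto}.

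Item (iii) is the substantive one and I expect it to be the main obstacle. Right exactness at $\mathcal G$ comes from functoriality in (i): the map $\widetilde{\mathcal G}\to\mathcal G$ exists, and its image is Zariski dense because $\widetilde\Pi\to\Pi$ is surjective. It is closed, so it is surjective.

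For exactness in the middle, let $\mathcal K$ be the kernel of $\widetilde{\mathcal G}\to\mathcal G$.
\begin{itemize}
\item Since $\rho$ is pulled back from $\Pi$, the group $\Pi_0$ maps into $\mathcal U$-type unipotent elements. So its image in $\widetilde{\mathcal G}$ lies in the unipotent group $\mathcal K$, giving the map from $(\Pi_0)^{\mathrm{un}}$ by the universal property of unipotent completion.
\item To show that the image $\mathcal N$ is all of $\mathcal K$, I would use that $\mathcal N$ is normal: $\Pi_0$ is normal and $\widetilde\Pi$ is Zariski dense.
\item The quotient $\widetilde{\mathcal G}/\mathcal N$ is then a negatively weighted extension of $R$. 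The composite $\widetilde\Pi\to(\widetilde{\mathcal G}/\mathcal N)(\Ql)$ kills $\Pi_0$, so it factors through a continuous lift of $\rho$ from $\Pi$.
\item Universality of $\mathcal G$ yields a map $\mathcal G\to\widetilde{\mathcal G}/\mathcal N$ splitting the projection. Zariski density shows it is an isomorphism, so $\mathcal K=\mathcal N$.
\end{itemize}
The delicate point is checking that $\mathcal N$ is closed and normal as a pro-algebraic subgroup, so that the quotient is again a negatively weighted extension. Here I would follow \cite[Props.~4.13--4.14]{HainMatsumoto}.
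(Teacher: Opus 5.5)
Your proposal is correct and takes the paper's approach: for (i) and (ii) the paper simply cites Hain--Matsumoto, and elsewhere it describes the weight filtration via eigenspaces of a lift of $\omega$, exactly as you do; for (iii) it uses your quotient argument, namely that the image of $(\Pi_0)^{\mathrm{un}}_{/\Ql}$ is normal in $\widetilde{\mathcal G}$ and the quotient has the universal property of $\mathcal G$. One wording fix: $\Pi_0$ lands in $\mathcal K$ because $\widetilde\Pi\to\widetilde{\mathcal G}(\Ql)\to\mathcal G(\Ql)$ agrees with $\widetilde\Pi\to\Pi\to\mathcal G(\Ql)$, not merely because its image consists of unipotent elements.
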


The third assertion uses the quotient argument of
Hain--Matsumoto's Prop.~4.14:
the image of $(\Pi_0)^{\mathrm{un}}_{/\Ql}$ is normal
in $\widetilde{\mathcal G}$, and the quotient has the
universal property of $\mathcal G$.

Functoriality gives the homomorphism
$\cG_{\cX}\to\cG_{\M}$. We claim that its kernel is precisely
the continuous unipotent completion $\cP$ of the
geometric fiber:
\begin{equation}\label{eq:weighted-curve-extension}
 1\longrightarrow\cP\longrightarrow\cG_{\cX}
 \longrightarrow\cG_{\M}\longrightarrow1.
\end{equation}
Right exactness follows from
Proposition~\ref{prop:weighted-properties}.
For injectivity, we use Hain's exactness criterion
\cite[\textcolor{black}{Prop.}~6.11, p.~725]{HainRational}.
The action on $H_1(\cP)=H$ factors through $R$, and
$H$ is finite-dimensional and pure of weight $-1$.

Moreover, $\cP$ has trivial center for $g\geq2$.
Indeed, write $L^\bullet$ for the closed lower central
series of $\cP$ and $\p$, as recalled in
Subsection~\ref{subsec:integral-lcs}.
The first nonzero lower central class of a nonzero
central element of $\p$ would be central in
$\Gr_L^\bullet\p$, contrary to
Lemma~\ref{lem:lie-center} and the surface Lie algebra
presentation. Logarithm gives the corresponding statement
for $\cP$. Hain's criterion therefore applies to
\eqref{eq:profinite-extension}.
Since the criterion concerns profinite groups, it applies
directly to the fundamental groups of our stacks.

The induced weight filtration on $\p$ satisfies
\begin{equation}\label{eq:weight-lower-central-fiber}
 W_{-r}\p=L^r\p\qquad(r\geq1).
\end{equation}
To see this, the bracket induces a surjection
$H^{\otimes r}\to\Gr_L^r\p$, so $\Gr_L^r\p$ is pure
of weight $-r$.
On each finite-dimensional quotient $\p/L^m\p$,
the lower central filtration therefore has the defining
properties of the weight filtration.
Passing to the inverse limit proves the identity;
compare \cite[\textcolor{black}{Rem.}~8.5, p.~730]{HainRational}.

On $P=\cP(\Ql)$, we use the closed lower central series
\[
 L^1P=P,\qquad L^{r+1}P=\overline{[P,L^rP]},
\]
with closure taken in the inverse-limit $\ell$-adic topology.
Then
\[
 L^rP=(L^r\cP)(\Ql)=\exp(L^r\p).
\]
Indeed, logarithm and exponential identify the lower
central series on finite-dimensional unipotent quotients;
passing to the inverse limit and taking closures gives
the assertion.

Taking Lie algebras in \eqref{eq:weighted-curve-extension}
and applying strictness gives
\begin{equation}\label{eq:weighted-graded-extension}
 0\longrightarrow\Gr^W_\bullet\p
 \longrightarrow\Gr^W_\bullet\g_{\cX}
 \longrightarrow\Gr^W_\bullet\g_{\M}\longrightarrow0.
\end{equation}

We now introduce the geometric completions.
For $T=\M,\cX$, the monodromy representation
\[
 \textcolor{black}{\pi_1}(T_{\bar k})\longrightarrow\Sp(H)(\Ql)
\]
is continuous and Zariski dense.
Its \emph{continuous relative completion} is a
proalgebraic extension
\[
 1\longrightarrow\cU_T^{\geom}
 \longrightarrow\cG_T^{\geom}
 \longrightarrow\Sp(H)\longrightarrow1
\]
with prounipotent kernel, together with a continuous
Zariski-dense lift
\[
 \textcolor{black}{\pi_1}(T_{\bar k})
 \longrightarrow\cG_T^{\geom}(\Ql).
\]
It is universal among continuous lifts of geometric
monodromy to prounipotent extensions of $\Sp(H)$;
see \cite[\textcolor{black}{Sec.}~6.2, pp.~720--721]{HainRational}.
No negative-weight condition is imposed in this
construction. The weight filtrations on these geometric
completions will instead come from the arithmetic action.

Put
\[
 \g_T^{\geom}=\Lie\cG_T^{\geom},
 \qquad
 \u_T^{\geom}=\Lie\cU_T^{\geom}.
\]
The inverse image of $\Sp(H)$ in $\cG_T$ is a
prounipotent extension of $\Sp(H)$, and the restriction
of the arithmetic completion map to
$\textcolor{black}{\pi_1}(T_{\bar k})$ takes values in this inverse image.
The universal property therefore gives a natural
homomorphism $\cG_T^{\geom}\to\cG_T$.
These homomorphisms fit into the commutative diagram
\[
\begin{tikzcd}
 \textcolor{black}{\pi_1}(T_{\bar k}) \arrow[r] \arrow[d] &
 \textcolor{black}{\pi_1}(T) \arrow[d] \\
 \cG_T^{\geom}(\Ql) \arrow[r] &
 \cG_T(\Ql).
\end{tikzcd}
\]

The geometric fundamental-group extension similarly gives
\begin{equation}\label{eq:geometric-completion-extension}
 1\longrightarrow\cP\longrightarrow\cG_{\cX}^{\geom}
 \longrightarrow\cG_{\M}^{\geom}\longrightarrow1.
\end{equation}
Continuous relative completion is right exact by the
same quotient argument as in
Proposition~\ref{prop:weighted-properties}.
The map from $\cP$ is injective because its composite
with $\cG_{\cX}^{\geom}\to\cG_{\cX}$ is the injection in
\eqref{eq:weighted-curve-extension}.
This is the argument of
\cite[\textcolor{black}{Prop.}~7.1, pp.~726--727]{HainRational}.

To equip the geometric sequence with weights, we use
the arithmetic action.
By \cite[\textcolor{black}{Prop.}~8.4, p.~730]{HainRational},
the conjugation action of $\Gamma$ on the geometric
fundamental group induces a $\cG_{\M}$-action on
$\g_{\M}^{\geom}$ and $\u_{\M}^{\geom}$, making them
pro-objects of the category of $\cG_{\M}$-modules.
The natural map $\g_{\M}^{\geom}\to\g_{\M}$ is equivariant.

The two completion sequences give the diagram
\begin{equation}\label{eq:geometric-arithmetic-comparison}
\begin{tikzcd}[column sep=small]
0 \arrow[r] & \p \arrow[r] \arrow[d,equal] &
\g_{\cX}^{\geom} \arrow[r] \arrow[d] &
\g_{\M}^{\geom} \arrow[r] \arrow[d] & 0 \\
0 \arrow[r] & \p \arrow[r] &
\g_{\cX} \arrow[r] & \g_{\M} \arrow[r] & 0.
\end{tikzcd}
\end{equation}
Both rows are exact and the map on kernels is the
identity, so the right-hand square is Cartesian:
\[
 \g_{\cX}^{\geom}
 \cong\g_{\cX}\times_{\g_{\M}}\g_{\M}^{\geom}.
\]
Give $\g_{\cX}$ its adjoint action and
$\g_{\M}^{\geom}$ the action obtained through
$\cG_{\cX}\to\cG_{\M}$.
Equivariance of the maps to $\g_{\M}$ makes this fiber
product a pro-$\cG_{\cX}$-module.
Thus the geometric sequence has compatible weight
filtrations, and its associated graded sequence is
exact and $R$-equivariant.

For $g\geq3$, we have
$W_{-1}\g_{\M}^{\geom}=\u_{\M}^{\geom}$
by \cite[\textcolor{black}{Rem.}~8.5, p.~730]{HainRational}.
Since $\p$ has only negative weights, the geometric
extension gives the same identity for $\cX$.
Consequently,
\[
 0\longrightarrow\Gr^W_\bullet\p
 \longrightarrow\Gr^W_\bullet\u_{\cX}^{\geom}
 \longrightarrow\Gr^W_\bullet\u_{\M}^{\geom}
 \longrightarrow0
\]
is the exact sequence of graded Lie algebras used in
Subsection~\ref{subsec:geometric-graded-sections}.

Finally, let $\cA_k$ be the weighted completion of $G_k$
with respect to the cyclotomic character and the
cocharacter $t\mapsto t^{-2}$.
The arithmetic fundamental-group sequence induces
\[
 \cG_{\M}^{\geom}\longrightarrow\cG_{\M}
 \longrightarrow\cA_k\longrightarrow1,
\]
which is exact by
\cite[\textcolor{black}{Prop.}~8.1, p.~729]{HainRational}.
In particular, the geometric image is normal.
The natural map $\u_{\M}^{\geom}\to\u_{\M}$ is
$\cG_{\M}$-equivariant, so its image is stable under $\g_{\M}$.
Strictness identifies the associated graded of this
image with the image of the associated graded map;
the latter is therefore a graded ideal.
No injectivity of $\cG_{\M}^{\geom}\to\cG_{\M}$ is required.
\subsection{Rational points of the completions and the pullback extension}
\label{subsec:weighted-pullback}

\textcolor{black}{To obtain an extension of topological groups over $\Gamma$,
we check exactness on $\Ql$-points.
Since the two completions have the same reductive quotient
$R$, \eqref{eq:weighted-curve-extension} gives a surjection
$\u_{\cX}\to\u_{\M}$ of pronilpotent Lie algebras.
Under the duality between pro-vector spaces and ordinary
vector spaces, this becomes an injection. Splitting that injection and dualizing gives
a pro-linear right inverse. Thus the original map is
surjective on inverse-limit vector spaces, and functorial
logarithm and exponential give a surjection}
$\cU_{\cX}(\Ql)\to\cU_{\M}(\Ql)$.

Levi's theorem gives surjectivity of
$\cG_T(\Ql)\to R(\Ql)$ for $T=\M,\cX$;
see \cite[Sec.~6.5]{HainRational}.
Thus both rows in the diagram
\[
\begin{tikzcd}[column sep=small]
1 \arrow[r] & \cU_{\cX}(\Ql) \arrow[r] \arrow[d,two heads] &
\cG_{\cX}(\Ql) \arrow[r] \arrow[d] &
R(\Ql) \arrow[r] \arrow[d,equal] & 1 \\
1 \arrow[r] & \cU_{\M}(\Ql) \arrow[r] &
\cG_{\M}(\Ql) \arrow[r] & R(\Ql) \arrow[r] & 1
\end{tikzcd}
\]
are exact, and the left vertical map is surjective by
the preceding argument. A diagram chase shows that
$\cG_{\cX}(\Ql)\to\cG_{\M}(\Ql)$ is surjective.
Since its kernel is $\cP(\Ql)=P$, we obtain the exact sequence
\begin{equation}\label{eq:weighted-points-extension}
 1\longrightarrow P\longrightarrow\cG_{\cX}(\Ql)
 \longrightarrow\cG_{\M}(\Ql)\longrightarrow1.
\end{equation}
This is the rational-point argument underlying
\cite[Prop.~15.1]{HainRational}.

The continuous linear right inverse $\u_{\M}\to\u_{\cX}$
obtained by duality, together with logarithm, exponential,
and compatible Levi splittings, gives a continuous
set-theoretic right inverse to
$\cG_{\cX}(\Ql)\to\cG_{\M}(\Ql)$.
This surjective homomorphism is therefore open.

Give rational points of proalgebraic groups their
inverse-limit $\ell$-adic topologies, and define
\begin{equation}\label{eq:EP-pullback-definition}
 E_P=\cG_{\cX}(\Ql)\times_{\cG_{\M}(\Ql)}\Gamma,
\end{equation}
where the map from $\Gamma$ is $\widetilde\rho_{\M}$.
The pullback topology and \eqref{eq:weighted-points-extension}
give the extension \eqref{eq:unipotent-extension}, with
kernel $P$ carrying the topology specified in Section~\ref{sec:conventions}.
The homomorphism
\[
 E\longrightarrow E_P,
 \qquad \textcolor{black}{x}\longmapsto
 \bigl(\widetilde\rho_{\cX}(\textcolor{black}{x}),
       \pi_*(\textcolor{black}{x})\bigr)
\]
restricts on $\Delta$ to its continuous unipotent completion
map. It therefore kills $R_\ell$ and descends continuously
to
\[
 \Phi:E_\ell\longrightarrow E_P.
\]
It restricts to $i:N\to P$ on kernels and induces the
identity on $\Gamma$. This construction uses no reference
section and applies also when $n=0$.

\subsection{Nonabelian cohomology and graded sections}
\label{subsec:completion-section-comparison}

We describe the comparison between continuous sections,
sections of weighted completions, and graded Lie algebra
sections used below. For a more detailed treatment,
see \cite{HainRational,HainNab}.

Recall from \eqref{eq:weight-lower-central-fiber} that
$W_{-m}\p=L^m\p$ for $m\geq1$. Write
\[
 W_{-m}\cP:=\exp(W_{-m}\p)=L^m\cP,
 \qquad
 W_{-m}P:=(W_{-m}\cP)(\Ql)\textcolor{black}{=L^mP}.
\]
For $r\geq1$, put
\[
 \cP_r=\cP/W_{-r-1}\cP,
 \qquad
 P_r=\cP_r(\Ql),
 \qquad
 \widehat{\cG}_r=\cG_{\cX}/W_{-r-1}\cP.
\]
Since $W_{-r-1}\cP$ is characteristic in $\cP$, it is
normal in $\cG_{\cX}$, and
\eqref{eq:weighted-curve-extension} induces an extension
\[
 1\longrightarrow\cP_r
 \longrightarrow\widehat{\cG}_r
 \longrightarrow\cG_{\M}
 \longrightarrow1.
\]
The group $\widehat{\cG}_r$ remains a negatively weighted
extension of $R$. Its kernel $\cP_r$ is finite-dimensional
and unipotent, since its Lie algebra is a quotient of the
free nilpotent Lie algebra of class $r$ on $H$.

By Subsection~\ref{subsec:weighted-pullback}, the associated
sequence of $\Ql$-points is exact. Define
\[
 E_r=\widehat{\cG}_r(\Ql)
       \times_{\cG_{\M}(\Ql)}\Gamma.
\]
With the pullback topology, this gives a commutative
diagram with exact rows:
\[
\begin{tikzcd}[column sep=small]
1 \arrow[r] & P_r \arrow[r] \arrow[d,equal] &
E_r \arrow[r] \arrow[d] &
\Gamma \arrow[r] \arrow[d,"\widetilde\rho_{\M}"] & 1 \\
1 \arrow[r] & P_r \arrow[r] &
\widehat{\cG}_r(\Ql) \arrow[r] &
\cG_{\M}(\Ql) \arrow[r] & 1.
\end{tikzcd}
\]
The right-hand square is Cartesian. The quotient
identifications
\[
 P_r\cong P/W_{-r-1}P,
 \qquad
 E_r\cong E_P/W_{-r-1}P
\]
hold as topological groups by the surjectivity and
openness argument of
Subsection~\ref{subsec:weighted-pullback}.

Let $H^1_{\nab}(\cG_{\M},\cP_r)$ denote Hain's
nonabelian-cohomology ind-affine scheme associated to
$\widehat{\cG}_r\to\cG_{\M}$.
Its $\Ql$-points are the $P_r$-conjugacy classes of
proalgebraic sections
\cite[\textcolor{black}{Thm.}~4.6, p.~593]{HainNab}.
Write $H^1_{\nab}(\Gamma,\cP_r)(\Ql)$ for the
$P_r$-conjugacy classes of continuous sections of
$E_r\to\Gamma$.
\textcolor{black}{We use analogous notation for the full extensions.
These section sets require no reference section; the
comparison below uses Hain's description of rational
points and makes no finite-type assertion.}

\begin{proposition}[Comparison of section sets]
\label{prop:completion-graded-section-comparison}
For every $r\geq1$, there are natural bijections
\begin{equation}\label{eq:completion-graded-section-comparison}
\begin{aligned}
 H^1_{\nab}(\Gamma,\cP_r)(\Ql)
 &\cong H^1_{\nab}(\cG_{\M},\cP_r)(\Ql)\\
 &\cong
 \left\{\begin{array}{c}
 \text{$R$-equivariant graded Lie algebra sections of}\\
 \Gr^W_\bullet(\g_{\cX}/W_{-r-1}\p)
       \longrightarrow\Gr^W_\bullet\g_{\M}
 \end{array}\right\},
\end{aligned}
\end{equation}
where graded maps are understood in the category of
pro-vector spaces.
These bijections are compatible with truncation.
Moreover,
\begin{equation}\label{eq:inverse-limit-section-classes}
 H^1_{\nab}(\Gamma,\cP)(\Ql)
 \cong\varprojlim_r H^1_{\nab}(\Gamma,\cP_r)(\Ql),
\end{equation}
and the analogous comparison holds for the full kernel
$\cP$, with graded map
$\Gr^W_\bullet\g_{\cX}\to\Gr^W_\bullet\g_{\M}$.
\end{proposition}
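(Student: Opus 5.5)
The plan is to prove the three identifications in order: continuous sections against proalgebraic sections, proalgebraic sections against graded Lie algebra sections, and finally the passage to the limit in $r$.

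\emph{Step 1: continuous sections versus proalgebraic sections.} Start with a continuous section $s:\Gamma\to E_r$. Composing with the projection $E_r\to\widehat{\cG}_r(\Ql)$ gives a continuous homomorphism lifting $\widetilde\rho_{\M}$.
\begin{itemize}
\item The Zariski closure of its image is a negatively weighted extension of $R$. It is a quotient of an extension of $R$ by a subgroup of the unipotent group $\cP_r$ together with $\cU_{\M}$.
\item The universal property of weighted completion (Proposition~\ref{prop:weighted-properties}(i)) therefore factors this homomorphism through $\cG_{\M}$. This produces a homomorphism $\sigma:\cG_{\M}\to\widehat{\cG}_r$.
\item Composed with $\widehat{\cG}_r\to\cG_{\M}$, $\sigma$ restricts to $\widetilde\rho_{\M}$ on $\Gamma$. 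Zariski density of $\widetilde\rho_{\M}$ then forces $\sigma$ to be a section.
\end{itemize}
Conversely, a proalgebraic section $\sigma$ pulls back along the Cartesian square defining $E_r$ to the continuous section $\gamma\mapsto(\sigma(\widetilde\rho_{\M}(\gamma)),\gamma)$.

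The two constructions are mutually inverse on the nose, again by Zariski density of $\Gamma$ in $\cG_{\M}$. They respect $P_r$-conjugacy because $P_r=\cP_r(\Ql)$ acts compatibly on both sides. This gives the first bijection. It is essentially Hain's argument \cite{HainRational,HainNab}. The point needing care is that the unipotent kernel of the closure of the image is negatively weighted. I would verify this from the weights of $\Gr^W_\bullet\p$ (all $\leq-1$, by \eqref{eq:weight-lower-central-fiber}) and of $\u_{\M}$.

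\emph{Step 2: proalgebraic sections versus graded Lie algebra sections.} A section $\sigma$ differentiates to a Lie algebra section $d\sigma:\g_{\M}\to\g_{\cX}/W_{-r-1}\p$. This map is a morphism of $\cG_{\M}$-modules, and by Proposition~\ref{prop:weighted-properties}(ii) it is strict. Taking $\Gr^W_\bullet$ therefore yields an $R$-equivariant graded section.

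To go back, choose a lift $\tilde\omega$ of $\omega$ to $\cG_{\M}$ and a compatible lift to $\widehat{\cG}_r$. These give natural splittings $\g\cong\Gr^W_\bullet\g$ and identify $\cG_{\M}\cong R\ltimes\exp(\Gr^W_{<0}\g_{\M})$, and similarly for $\widehat{\cG}_r$. A graded equivariant section then exponentiates to a section of $\widehat{\cG}_r\to\cG_{\M}$.

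Conjugating by $P_r$ changes $\sigma$ within its class. Hain's theorem \cite[Thm.~4.6]{HainNab} identifies the $\Ql$-points of $H^1_{\nab}$ with sections commuting with a fixed lift of $\omega$. Each $P_r$-orbit contains exactly one such section, because the centralizer of $\tilde\omega$ in $P_r$ is trivial: $\cP_r$ has only negative weights. This is the step I expect to be the main obstacle. It requires showing two things:
\begin{itemize}
\item any section can be conjugated by $P_r$ to one that is $\tilde\omega$-equivariant, which is a conjugacy-of-tori argument in the unipotent extension;
\item uniqueness up to this normalization, which follows from the weight argument above.
\end{itemize}
I would prove the first by induction along $W_{-m}\cP_r$, killing the obstruction at each graded piece using that $H^1(\mathbb G_m,-)=0$ for nonzero weights.

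\emph{Step 3: compatibility with truncation and the inverse limit.} All constructions above are functorial in the quotients $\cP_{r+1}\to\cP_r$, so the bijections commute with truncation.

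For \eqref{eq:inverse-limit-section-classes}, the surjectivity part is direct. A compatible family of classes can be normalized to $\omega$-equivariant representatives. By the uniqueness in Step 2 these representatives are uniquely determined, hence strictly compatible, and they glue to a continuous section of $E_P=\varprojlim E_r$. Continuity holds because $P=\varprojlim P_r$ topologically and $E_r\cong E_P/W_{-r-1}P$.

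Injectivity uses the same rigidity. If two sections agree in every truncation, their normalized representatives coincide at each level, hence coincide in the limit.

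The statement for the full kernel $\cP$ follows by applying Steps 1 and 2 to $\cG_{\cX}\to\cG_{\M}$ directly, or equivalently by passing to the limit over $r$.
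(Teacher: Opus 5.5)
Your proposal is correct and follows the paper's proof. The first bijection uses the same universal-property argument, and the inverse limit uses the same normalization by a lift of $\omega$: the trivial centralizer in $P_r$ gives a unique normalizing conjugator, hence strictly compatible representatives and compatible conjugators. The only difference is that you sketch Hain's graded-section description (via torus conjugacy and the weight splitting) where the paper simply cites it; when writing out Step~3, take the lifts $\widehat\omega_r$ to be projections of a single lift to $\cG_{\cX}$ (e.g.\ from a Levi section), so that a normalized section at level $r+1$ projects to a normalized section at level $r$.
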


\begin{proof}
A continuous section of $E_r\to\Gamma$ determines a
continuous lift
$t:\Gamma\to\widehat{\cG}_r(\Ql)$ of
$\widetilde\rho_{\M}$.
The universal property of weighted completion extends
$t$ uniquely to a proalgebraic homomorphism
$\sigma:\cG_{\M}\to\widehat{\cG}_r$.
Its composite with the projection to $\cG_{\M}$ is the
identity, since it agrees with the identity on the
Zariski-dense image of $\Gamma$.
Restriction along $\widetilde\rho_{\M}$ is the inverse
construction, and both constructions respect
$P_r$-conjugacy. This proves the first bijection, as in
\cite[\textcolor{black}{Sec.}~5, p.~596]{HainNab}.
The second is Hain's graded-section description
\cite[\textcolor{black}{Cor.}~4.7, p.~593]{HainNab}, given by
$[\sigma]\mapsto\Gr^W_\bullet d\sigma$.
Both comparisons commute with truncation.

For the inverse-limit assertion, we work with proalgebraic
sections, using the same universal-property comparison for
the full extension. Choose a Levi section $R\to\cG_{\cX}$,
giving compatible cocharacter lifts $\widehat\omega_r$
and $\widetilde\omega$.
By \cite[\textcolor{black}{Lem.}~4.2 and \textcolor{black}{Cor.}~4.5,
pp.~590--593]{HainNab}, each finite-level class has a
unique normalized representative $\sigma_r$ satisfying
\[
 \sigma_r\circ\widetilde\omega=\widehat\omega_r,
\]
with a unique normalizing conjugator.
The projection of $\sigma_{r+1}$ is normalized and
represents the same class as $\sigma_r$, so uniqueness
makes these representatives compatible.

Every finite-dimensional algebraic quotient of $\cG_{\cX}$
has nilpotent image of $\cP$, and therefore kills
$W_{-r-1}\cP=L^{r+1}\cP$ for sufficiently large $r$.
Hence
\[
 \cG_{\cX}\cong\varprojlim_r\widehat{\cG}_r,
\]
and compatible normalized representatives define a full
section. This proves surjectivity in
\eqref{eq:inverse-limit-section-classes}.

For injectivity, let $s$ and $t$ be full sections whose
finite-level classes agree. Their truncations $s_r,t_r$
have a common normalized representative $\sigma_r$, so
there are unique $u_r,\textcolor{black}{\widetilde u_r}\in P_r$ satisfying
\[
 u_rs_ru_r^{-1}=\sigma_r,
 \qquad
 \textcolor{black}{\widetilde u_r}t_r\textcolor{black}{\widetilde u_r}^{-1}=\sigma_r.
\]
Projection preserves these identities. By uniqueness,
the transition map $P_{r+1}\to P_r$ sends $u_{r+1}$ to
$u_r$ and $\textcolor{black}{\widetilde u_{r+1}}$ to $\textcolor{black}{\widetilde u_r}$. Hence there are elements
$u,\textcolor{black}{\widetilde u}\in P=\varprojlim_r P_r$ whose images in $P_r$ are
$u_r,\textcolor{black}{\widetilde u_r}$, respectively, for every $r$.
Taking inverse limits gives $usu^{-1}=\textcolor{black}{\widetilde u} t\textcolor{black}{\widetilde u}^{-1}$,
so $s$ and $t$ are conjugate by $\textcolor{black}{\widetilde u}^{-1}u\in P$.
This proves injectivity.

Finally, for each fixed weight $m$, the natural map
\[
 \Gr^W_m\g_{\cX}\longrightarrow
 \Gr^W_m(\g_{\cX}/W_{-r-1}\p)
\]
is an isomorphism whenever $-r-1<m$.
A compatible family of finite-level graded sections
therefore determines a unique full graded map, weight
by weight. It is $R$-equivariant and a right inverse
because these properties hold at every finite level.
Bracket preservation follows by choosing a level at
which the two input weights and their sum have stabilized.
This establishes the full graded comparison.
\end{proof}

\section{Integral surface groups and conjugation}
\label{sec:integral-conjugation}

We prove that an element $u\in P$ satisfying
$u^{-1}Nu\subseteq N$ belongs to $N$. This will allow us
to recover pro-$\ell$ conjugacy from unipotent conjugacy
in Section~\ref{sec:moving}.

\subsection{Surface groups and the integral lower central series}
\label{subsec:integral-lcs}

\textcolor{black}{Let $g\geq2$, and let $\Pi_g$ be the discrete
surface group of genus $g$, with presentation}
\[
 \Pi_g=\left\langle a_1,b_1,\ldots,a_g,b_g\ \middle|\
       \prod_{j=1}^g[a_j,b_j]=1\right\rangle,
 \qquad [x,y]=xyx^{-1}y^{-1}.
\]
Fix a prime $\ell$, and put
\[
 N=\Pi_g^{(\ell)},\qquad
 \cP=(\Pi_g)^{\mathrm{un}}_{/\Ql},\qquad
 P=\cP(\Ql),\qquad \p=\Lie\cP.
\]
\textcolor{black}{Using the comparison of
Subsection~\ref{subsec:geometric-kernels}, identify these
with the corresponding objects of Section~\ref{sec:conventions}.}
Write $\eta:\Pi_g\to N$ and $\rho:\Pi_g\to P$ for the completion
homomorphisms. 
\textcolor{black}{We retain the topology on $P$ specified in
Subsection~\ref{subsec:geometric-kernels}.}

\textcolor{black}{We use $L^\bullet$ for the ordinary lower central series
of $\Pi_g$ and the closed lower central series of $N$:}
\[
 L^1\Pi_g=\Pi_g,\qquad L^{r+1}\Pi_g=[\Pi_g,L^r\Pi_g],
 \qquad
 L^1N=N,\qquad L^{r+1}N=\overline{[N,L^rN]}.
\]
\textcolor{black}{Brackets denote commutator subgroups, and closures
in $N$ are taken in its pro-$\ell$ topology.}
\textcolor{black}{For $\cP$ and $\p$, use the closed lower central
series of Subsection~\ref{subsec:weighted-curve-sequences},
so that}
{\color{black}
\[
 L^r\p=\Lie(L^r\cP),\qquad
 L^rP=(L^r\cP)(\Ql)=\exp(L^r\p).
\]
}
\textcolor{black}{Put $Q_r=L^r\Pi_g/L^{r+1}\Pi_g$, and write
$\Gr_L^\bullet$ for the associated graded Lie algebras.}

\textcolor{black}{For a commutative ring $A$, write $\LL_A$
for the free graded Lie algebra over $A$ on the indicated
degree-one generators. In the following presentation,
$(\theta)$ denotes the Lie ideal generated by $\theta$.} \textcolor{black}{Labute's integral calculation \cite[\textcolor{black}{Thm.}, p.~17]{Labute} gives a natural}
graded Lie algebra isomorphism
\begin{equation}\label{eq:labute}
 \Gr_L^\bullet\Pi_g
 \cong
 \frac{\LL_{\Z}(A_1,B_1,\ldots,A_g,B_g)}{(\theta)},
 \qquad
 \theta=\sum_{j=1}^g[A_j,B_j],
\end{equation}
\textcolor{black}{and every $Q_r$ is free abelian of finite rank.}
\textcolor{black}{Here $A_j$ and $B_j$ are the classes of $a_j$
and $b_j$, and $\theta$ is the primitive initial form of the
surface relator.}

\begin{proposition}[Integral lower central series]
\label{prop:intersection}
The homomorphism $\rho$ extends uniquely to a continuous
homomorphism $i:N\to P$, which is a closed topological embedding.
For every $r\geq1$, the completion homomorphisms identify
$\Gr_L^r i$ with
\[
 \Gr_L^rN
 \cong Q_r\otimes_{\Z}\Zl
 \hookrightarrow Q_r\otimes_{\Z}\Ql
 \cong \Gr_L^rP.
\]
Identifying $N$ with its image in $P$, we have
\[
 N\cap L^rP=L^rN
 \qquad(r\geq1).
\]
\end{proposition}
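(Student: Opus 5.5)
The plan is to reduce everything to the finite nilpotent quotients $\Pi_g/L^{r+1}\Pi_g$, where Labute's presentation \eqref{eq:labute} supplies free abelian graded pieces. Existence of $i$ was already established in Subsection~\ref{subsec:geometric-kernels}: the completion map factors through $N$. Uniqueness holds because $\eta(\Pi_g)$ is dense in $N$ and $P$ is Hausdorff. Once injectivity is known, the closed-embedding statement is formal: $N$ is compact and $P$ is Hausdorff, so a continuous injection is a homeomorphism onto a closed subset.

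\emph{Step 1: identifying $\Gr_L^rN$.} Put $G_r=\Pi_g/L^{r+1}\Pi_g$. This is a finitely generated nilpotent group, and by Labute its lower central quotients $Q_1,\dots,Q_r$ are free abelian of finite rank, so $G_r$ is torsion free.

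Since $L^{r+1}N$ is the closure of $\eta(L^{r+1}\Pi_g)$, we get $N/L^{r+1}N\cong G_r^{(\ell)}$. I would then choose a Mal'cev basis of $G_r$ adapted to the lower central series. Every element of $G_r$ is then written uniquely as an ordered product $\prod x_i^{n_i}$ with $n_i\in\Z$, and group multiplication is given by polynomial maps in the exponents.

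The pro-$\ell$ completion is the same coordinate system with $n_i\in\Zl$. This follows because the subgroups $G_r^{\ell^m}$-type congruence subgroups are cofinal, or equivalently from Lazard's theory. It gives $\Gr_L^sG_r^{(\ell)}\cong Q_s\otimes\Zl$ for $s\le r$; in particular pro-$\ell$ completion is exact on the lower central filtration here. I expect this to be the main technical point. The key inputs are that $G_r$ is residually a finite $\ell$-group, which holds for finitely generated torsion-free nilpotent groups, and that the closure of $L^sG_r$ in $G_r^{(\ell)}$ is its $\Zl$-completion with no extra torsion.

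\emph{Step 2: identifying $\Gr_L^rP$.} In the same way, $P/L^{r+1}P$ is the $\Ql$-points of the Mal'cev completion of $G_r$. This is the unipotent group with the same coordinates and $n_i\in\Ql$, so $\Gr_L^sP\cong Q_s\otimes\Ql$. Naturality of the coordinate description shows that $\Gr_L^s i$ is the base change of the inclusion $\Zl\hookrightarrow\Ql$. Because $Q_s$ is free, this map is injective.

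\emph{Step 3: the intersection formula.} Let $x\in N\cap L^rP$, and suppose $x\in L^sN$ for some $s<r$. Its class in $\Gr_L^sN$ maps to zero in $\Gr_L^sP$, since $x\in L^{s+1}P$. By the injectivity of Step 2, $x\in L^{s+1}N$. Inducting from $s=1$ gives $x\in L^rN$; the reverse inclusion $L^rN\subseteq N\cap L^rP$ is clear.

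Injectivity of $i$ now follows: the kernel lies in $\bigcap_rL^rN$, which is trivial because $N$ is a finitely generated pro-$\ell$ group. Hence $i$ is a closed embedding by the compactness argument above.
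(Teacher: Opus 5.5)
Your proof is correct, but it obtains the graded identification by a different route from the paper.

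\textbf{Your route.} You import the structure theory of finitely generated torsion-free nilpotent groups. This means Mal'cev--Hall coordinates on $G_r=\Pi_g/L^{r+1}\Pi_g$ adapted to the lower central series. It also means that $G_r^{(\ell)}$ is the same coordinate space with $\Zl$-exponents (via Gruenberg's residual-$\ell$ property, cofinality of power subgroups, or Lazard). Finally, it uses the analogous description of the Mal'cev completion over $\Ql$. From these you read off $\Gr_L^sN\cong Q_s\otimes_{\Z}\Zl$ and $\Gr_L^sP\cong Q_s\otimes_{\Z}\Ql$ directly, and injectivity of $\Gr_L^s i$ is then visible in coordinates.

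\textbf{The paper's route.} The paper never computes $\Gr_L^rN$ independently. It uses only the cheap surjection $q_r\colon Q_r\otimes_{\Z}\Zl\to L^rN/L^{r+1}N$, which holds because the image is dense and the source is compact. It combines this with the rational isomorphism $\chi_r\colon Q_r\otimes_{\Z}\Ql\cong L^rP/L^{r+1}P$. That isomorphism comes from matching Labute's presentation with Hain's presentation of $\Gr_L^\bullet\p_{\Q}$, then base-changing via Hain--Matsumoto. The composite $\chi_r^{-1}\circ j_r\circ q_r$ agrees with the scalar inclusion on the dense image of $Q_r$, hence everywhere by continuity. That inclusion is injective because $Q_r$ is free. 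This one observation forces both conclusions at once: $q_r$ is an isomorphism and $j_r$ is injective.

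\textbf{Comparison.} The fact you flag as your main technical input, and only sketch, is exactness of pro-$\ell$ completion on the lower central filtration of $G_r$. In the paper this is a by-product of the composite argument, not an imported hypothesis. Your route gives a somewhat stronger and more transparent statement at each finite level. In coordinates the truncated map $N/L^{r+1}N\to P/L^{r+1}P$ is just $G_r\otimes\Zl\hookrightarrow G_r\otimes\Ql$, so it is itself injective. The cost is reliance on heavier external facts about pro-$\ell$ and Mal'cev completions, which the paper's argument avoids. Your Step~3 induction and the final injectivity and closed-embedding argument coincide with the paper's.
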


\begin{proof}
\textcolor{black}{By \cite[\textcolor{black}{Lem.}~A.7, p.~161]{HainMatsumoto},
$\rho$ extends uniquely to a continuous homomorphism
$i:N\to P$. Since $N$ is compact and $P$ is Hausdorff,
$i(N)$ is closed. Continuity and compatibility with
commutators, together with closedness of the lower central
terms of $P$, give
\begin{equation}\label{eq:filtration-inclusion}
 i(L^rN)\subseteq L^rP \qquad(r\geq1).
\end{equation}}

\textcolor{black}{Put
\[
 \cP_{\Q}=(\Pi_g)^{\mathrm{un}}_{/\Q},
 \qquad \p_{\Q}=\Lie\cP_{\Q},
\]
and let $\rho_{\Q}:\Pi_g\to\cP_{\Q}(\Q)$ be the completion
homomorphism. For each $r\geq1$, define
\[
 \lambda_{r,\Q}:Q_r\otimes_{\Z}\Q
 \longrightarrow\Gr_L^r\p_{\Q},
 \qquad
 \bar x\otimes t\longmapsto
 t\log\rho_{\Q}(x)\bmod L^{r+1}\p_{\Q},
\]
where $x\in L^r\Pi_g$.
The Baker--Campbell--Hausdorff formula makes these maps
a graded Lie algebra homomorphism whose degree-one part
is the canonical identification
$H_1(\Pi_g,\Q)\cong\p_{\Q}/L^2\p_{\Q}$.
By \eqref{eq:labute} and
\cite[\textcolor{black}{Prop.}~9.2, p.~735]{HainRational}, both graded
Lie algebras have the natural presentation
$\LL_{\Q}(A_1,B_1,\ldots,A_g,B_g)/(\theta)$.
Since $\lambda_{\bullet,\Q}$ fixes the degree-one
generators, it is an isomorphism.}

\textcolor{black}{For each $m\geq1$, the universal property
identifies the unipotent completion of $\Pi_g/L^m\Pi_g$
with $\cP_{\Q}/L^m\cP_{\Q}$ over $\Q$ and with
$\cP/L^m\cP$ over $\Ql$.
These quotients are finite-dimensional: their Lie algebras
are quotients of a free nilpotent Lie algebra on finitely
many generators.
By \cite[\textcolor{black}{Prop.}~A.1, p.~158]{HainMatsumoto}, there are
compatible isomorphisms
\[
 \cP/L^m\cP
 \cong
 (\cP_{\Q}/L^m\cP_{\Q})\times_{\Q}\Ql.
\]
Taking Lie algebras and kernels of successive quotient
maps gives
\[
 (\Gr_L^r\p_{\Q})\otimes_{\Q}\Ql
 \xrightarrow{\sim}\Gr_L^r\p.
\]
Compatibility with the completion homomorphisms therefore
gives the isomorphism
\[
 \lambda_{r,\Ql}:Q_r\otimes_{\Z}\Ql
 \xrightarrow{\sim}\Gr_L^r\p,
 \qquad
 \bar x\otimes t\longmapsto
 t\log\rho(x)\bmod L^{r+1}\p.
\]
}

\textcolor{black}{Since $L^rP=\exp(L^r\p)$, the
Baker--Campbell--Hausdorff formula gives the isomorphism
\begin{equation}\label{eq:rational-comparison}
 \textcolor{black}{\chi_r}:
 Q_r\otimes_{\Z}\Ql
 \xrightarrow[\lambda_{r,\Ql}]{\sim}
 \Gr_L^r\p
 \xrightarrow[\overline{\exp}]{\sim}
 L^rP/L^{r+1}P.
\end{equation}
This is a topological isomorphism for the usual $\ell$-adic
topology on the source and the quotient topology on the
target: lifting a basis gives a continuous linear section
of $L^r\p\to\Gr_L^r\p$, and logarithm and exponential
are continuous. Explicitly,
\[
 \textcolor{black}{\chi_r}(\bar x\otimes t)
 =\exp\bigl(t\log\rho(x)\bigr)\bmod L^{r+1}P.
\]
}

\textcolor{black}{Density of $\eta(\Pi_g)$ in $N$ and
continuity of commutators give, by induction on $r$,
\begin{equation}\label{eq:closure-discrete}
 \overline{\eta(L^r\Pi_g)}=L^rN.
\end{equation}}

\textcolor{black}{The induced map $Q_r\to L^rN/L^{r+1}N$
has dense image. Since its target is an abelian
pro-$\ell$ group and $Q_r$ is free abelian of finite rank,
it extends to a continuous $\Zl$-linear homomorphism
\[
 q_r:Q_r\otimes_{\Z}\Zl\longrightarrow L^rN/L^{r+1}N.
\]
Its image is compact and dense, so $q_r$ is surjective.}
By \eqref{eq:filtration-inclusion}, $i$ induces a continuous
homomorphism
\[
 j_r:L^rN/L^{r+1}N\longrightarrow L^rP/L^{r+1}P.
\]
Consider
\begin{equation}\label{eq:key-composite}
 Q_r\otimes_{\Z}\Zl
 \xrightarrow{q_r}L^rN/L^{r+1}N
 \xrightarrow{j_r}L^rP/L^{r+1}P
 \xrightarrow{\textcolor{black}{\chi_r}^{-1}}Q_r\otimes_{\Z}\Ql.
\end{equation}
\textcolor{black}{For $x\in L^r\Pi_g$ with class $\bar x\in Q_r$,
compatibility of the completion maps gives
\[
 j_rq_r(\bar x\otimes1)
 =\rho(x)L^{r+1}P
 =\textcolor{black}{\chi_r}(\bar x\otimes1).
\]
Thus the composite in \eqref{eq:key-composite} agrees
with the scalar inclusion on the dense image of $Q_r$,
hence everywhere by continuity.
This inclusion is injective because $Q_r$ is free abelian.
Since $q_r$ is surjective, it is an isomorphism and
$j_r$ is injective. We obtain}
\begin{equation}\label{eq:graded-injection}
 L^rN/L^{r+1}N
 \cong Q_r\otimes_{\Z}\Zl
 \hookrightarrow Q_r\otimes_{\Z}\Ql
 \cong L^rP/L^{r+1}P.
\end{equation}

\textcolor{black}{By \eqref{eq:filtration-inclusion} and
injectivity of $j_r$, the equality $i^{-1}(L^rP)=L^rN$
implies $i^{-1}(L^{r+1}P)=L^{r+1}N$.
Since the case $r=1$ is immediate, the equality holds
for every $r\geq1$.}

Finally, $\bigcap_{r\geq1}L^rN=\{1\}$.
Indeed, every nontrivial element $x\in N$ maps to a nontrivial
element of some finite $\ell$-group quotient $F$.
Since $F$ is nilpotent, $L^mF=\{1\}$ for some $m\geq1$.
The quotient map kills $L^mN$, so $x\notin L^mN$.
An element of $\ker i$ belongs to $i^{-1}(L^rP)=L^rN$
for every $r$ and is therefore trivial.
Thus $i$ is injective. Being a continuous injection from
a compact space into a Hausdorff space, it is a topological
embedding with closed image.
\end{proof}
\textcolor{black}{The graded comparisons preserve brackets,
so \eqref{eq:labute} gives
\begin{equation}\label{eq:Zl-presentation}
 \Gr_L^\bullet N
 \cong
 \frac{\LL_{\Zl}(A_1,B_1,\ldots,A_g,B_g)}{(\theta)}.
\end{equation}
This is also the case $(n,r)=(0,1)$ of
\cite[\S2, (2.8), pp.~201--202]{NakamuraTakaoUeno},
where all generators have degree one and the closed
weight filtration is the ordinary closed lower central
series.}

\begin{remark}
The argument applies to every prime $\ell$, including
$\ell=2$, and to every $r\geq1$.
The filtration is the ordinary closed lower central
series, not the lower $\ell$-central or Zassenhaus series.
Logarithms and exponential maps are used over $\Ql$;
no assertion that $\log N$ is a $\Zl$-Lie algebra is needed.
The integral injectivity follows from
\eqref{eq:key-composite}, before the intersection formula
is deduced.
\end{remark}
\subsection{\textcolor{black}{The center of the graded surface Lie algebra}}
\label{subsec:graded-centerlessness}

\textcolor{black}{Asada proves that the graded surface Lie algebra
has trivial center both over $\Z$ and after reduction modulo
every prime, using an argument of Labute
\cite[\S3.2]{AsadaFiltration}.
The integral statement is also recorded by
Nakamura--Takao--Ueno
\cite[\S2, immediately after (2.8.1)]{NakamuraTakaoUeno}.
We include a proof over an arbitrary field using Bergman's
Diamond Lemma.}

\begin{lemma}[Normal words for the surface relation]
\label{lem:surface-normal-words}
Let $g\geq2$ and let $F$ be any field. The images of the words
containing no adjacent pair $A_1B_1$, including the empty word,
form an $F$-basis of
\[
 \frac{F\langle A_1,B_1,\ldots,A_g,B_g\rangle}
 {\left(\sum_{j=1}^g(A_jB_j-B_jA_j)\right)},
\]
where the denominator denotes a two-sided ideal.
We call these words normal.
\end{lemma}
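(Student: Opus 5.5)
We apply Bergman's Diamond Lemma to the free associative algebra $F\langle A_1,B_1,\ldots,A_g,B_g\rangle$ with the single reduction rule
\[
 A_1B_1\longmapsto B_1A_1-\sum_{j=2}^g(A_jB_j-B_jA_j).
\]
This rule is the relation solved for its leading word $A_1B_1$. Choose a semigroup ordering compatible with the rule, for instance deg-lex with $A_1>B_1>A_j,B_j$ for $j\geq2$, or any ordering in which $A_1B_1$ is the largest degree-two monomial appearing in the relation. Every word on the right-hand side has degree two and differs from $A_1B_1$. For deg-lex with $A_1$ the largest letter, every such word is strictly smaller: the words beginning with $B_1$, $A_j$ or $B_j$ for $j\geq2$ all have a smaller first letter. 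Degree-lexicographic orderings are semigroup orderings and satisfy the descending chain condition. The ordering hypothesis of the Diamond Lemma therefore holds.

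The irreducible words are exactly those containing no subword $A_1B_1$, that is, the normal words. By the Diamond Lemma, they form a basis of the quotient by the two-sided ideal generated by $A_1B_1-(\text{right-hand side})$. This ideal equals the ideal generated by $\sum_j(A_jB_j-B_jA_j)$. It remains to check that all ambiguities resolve. There are no inclusion ambiguities, since only one rule exists. An overlap ambiguity would need a word $w=xyz$ in which $xy=A_1B_1$ and $yz=A_1B_1$ with $y$ nonempty. The only possible overlap is $y=B_1=A_1$, which is impossible because $A_1\neq B_1$. Thus the leading word $A_1B_1$ has no self-overlap, the set of ambiguities is empty, and the lemma follows.

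The only step needing care is that the ordering is a genuine semigroup partial order with DCC compatible with the reduction. Here $g\geq2$ matters only in that the right-hand side involves the other generators; for $g=1$ the same argument works as well. No step depends on the field $F$, since all coefficients are $\pm1$.
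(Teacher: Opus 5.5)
Your proposal is correct and takes essentially the same approach as the paper: you use the single reduction rule $A_1B_1\mapsto B_1A_1-\sum_{j\geq2}(A_jB_j-B_jA_j)$ under the degree-lexicographic order with $A_1$ the largest letter. There is no inclusion ambiguity because there is only one rule, and no overlap ambiguity because $A_1B_1$ has no proper self-overlap, so Bergman's Diamond Lemma gives the normal-word basis.
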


\begin{proof}
Order the letters with $A_1$ largest and use the
degree-lexicographic order on words. The defining relation gives
the reduction rule
\[
 A_1B_1\longmapsto
 B_1A_1-\sum_{j=2}^g(A_jB_j-B_jA_j).
\]
\textcolor{black}{Each word on the right is smaller than
$A_1B_1$ in degree-lexicographic order, so reduction
terminates. There is a single rule, hence no inclusion
ambiguity; there is no overlap ambiguity because
$A_1B_1$ has no proper self-overlap.
Bergman's Diamond Lemma
\cite[\textcolor{black}{Thm.~1.2}]{BergmanDiamond}
therefore gives the asserted basis.}
\end{proof}

We use this normal-word basis to show that a central element
of the surface Lie algebra must vanish.

\begin{lemma}[\textcolor{black}{Trivial center over a field}]\label{lem:lie-center}
Let $g\geq2$ and let $F$ be any field. Put
\[
 \mathfrak s_F
 =
 \frac{\LL_F(A_1,B_1,\ldots,A_g,B_g)}{(\theta)},
 \qquad
 \theta=\sum_{j=1}^g[A_j,B_j],
\]
where $(\theta)$ denotes the Lie ideal generated by $\theta$.
Then $Z(\mathfrak s_F)=0$. In particular,
\[
 Z\bigl(\Gr_L^\bullet N\otimes_{\Zl}\Fl\bigr)=0.
\]
\end{lemma}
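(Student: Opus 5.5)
We show that a central element of $\mathfrak s_F$ vanishes by embedding $\mathfrak s_F$ into the associative algebra
\[
 U=F\langle A_1,B_1,\ldots,A_g,B_g\rangle\Big/\Bigl(\sum_{j}(A_jB_j-B_jA_j)\Bigr),
\]
which is its universal enveloping algebra, and then computing in the normal-word basis of Lemma~\ref{lem:surface-normal-words}. The enveloping algebra of $\LL_F(V)/(\theta)$ is $T(V)/(\theta)$ in every characteristic. Poincaré--Birkhoff--Witt holds for Lie algebras over a field that are free as modules, so $\mathfrak s_F\to U$ is injective. Only $F$-linear independence of the normal words is used, so positive characteristic causes no extra trouble here.

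Let $z\in\mathfrak s_F$ be central; since $\mathfrak s_F$ is graded and the bracket with generators is homogeneous, each homogeneous component of $z$ is central, so assume $z$ homogeneous of degree $d\geq1$. Centrality gives $zX=Xz$ in $U$ for every generator $X$. Write $z$ in normal form, $z=\sum_w c_w w$ over normal words $w$ of length $d$. The key choice is a generator whose left and right multiplications preserve normality.

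Take $X=A_2$; this uses $g\geq2$. For a normal word $w$, both $A_2w$ and $wA_2$ are again normal, since no $A_1B_1$ pattern can be created at the junction. Hence $\sum c_w A_2w=\sum c_w wA_2$ is an equality of linear combinations of distinct normal words, and comparing coefficients gives $c_w=c_{w'}$ whenever $A_2w=w'A_2$. Choose $w$ with $c_w\neq0$. Iterating the relation, $A_2^kw=w_kA_2^k$ with $c_{w_k}=c_w$. For $k>d$, $A_2^kw$ is a word of length $k+d$ whose first $k$ letters are $A_2$; its right factor of length $k$ must then consist only of $A_2$'s as well, so $w=A_2^d$. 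Thus $z=cA_2^d$.

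For $d\geq2$, $A_2^d$ does not lie in the Lie image: it is $\ad(A_2)^{d-1}A_2=0$ under the standard symmetrization argument, and more concretely $[A_2,A_2]=0$, so the degree-$d$ part of $\mathfrak s_F$ spanned by $A_2$ alone vanishes. A cleaner alternative is to test against $X=B_2$, which likewise preserves normality on both sides: $B_2A_2^d\neq A_2^dB_2$ as distinct normal words, forcing $c=0$. This handles $d\geq1$ uniformly, including $d=1$, where $[A_2,B_2]\neq0$ by the basis lemma.

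The main obstacle I expect is the PBW injectivity in arbitrary characteristic and keeping track of which words are normal after multiplication. The choice of $A_2,B_2$ (rather than $A_1,B_1$) is made exactly to avoid rewriting. The final assertion follows from \eqref{eq:Zl-presentation}: $\Gr_L^\bullet N\otimes\Fl\cong\mathfrak s_{\Fl}$, since the Labute presentation and the free-module property of each graded piece are compatible with base change.
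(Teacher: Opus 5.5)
Your proposal is correct and follows essentially the paper's proof. You embed $\mathfrak s_F$ in $U(\mathfrak s_F)$ by PBW and use the normal-word basis with $X=A_2$, $Y=B_2$, whose one-sided multiplications preserve normality, to force a commuting element first into $F[A_2]$ and then into $F$; the positive grading then gives zero. The only differences are that you use a coefficient-iteration argument where the paper cancels initial letters, and that your intermediate remark about $\operatorname{ad}(A_2)^{d-1}A_2=0$ and ``the degree-$d$ part spanned by $A_2$'' is not a valid justification (symmetrization-type reasoning fails in characteristic $p$, where $A_2^p$ is primitive in $U$). You should drop that remark and keep the $B_2$ test, which is exactly the paper's step.
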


\begin{proof}
The universal properties of the free Lie algebra and of the
universal enveloping algebra give
\[
 U(\mathfrak s_F)
 \cong
 \frac{F\langle A_1,B_1,\ldots,A_g,B_g\rangle}
 {\left(\sum_{j=1}^g(A_jB_j-B_jA_j)\right)}.
\]
Lemma~\ref{lem:surface-normal-words} gives a basis of normal words
for this algebra. Put $X=A_2$ and $Y=B_2$.
Prepending or appending either $X$ or $Y$ to a normal word
preserves normality.

\textcolor{black}{Suppose $f\in U(\mathfrak s_F)$ satisfies
$fX=Xf$, and write $f=\sum_{d\geq0}f_d$ in homogeneous
normal form, with only finitely many $f_d$ nonzero. Since multiplication by $X$ on either side
preserves normality, each equality $f_dX=Xf_d$ holds
in the free associative algebra.
For $d>0$, comparison of initial letters gives
$f_d=Xh$, with $h$ homogeneous of degree $d-1$.
Cancelling the initial $X$ gives $hX=Xh$, so induction
yields $f_d=c_dX^d$ for some $c_d\in F$.
Thus $f\in F[X]$.}

\textcolor{black}{If $f$ also commutes with $Y$, write
$f=\sum_{d\geq0}c_dX^d$, with only finitely many nonzero
coefficients. Then
\[
 0=fY-Yf
   =\sum_{d\geq1}c_d(X^dY-YX^d).
\]
The words $X^dY$ and $YX^d$ with $d\geq1$ are normal
and pairwise distinct, so their linear independence gives
$c_d=0$ for every $d\geq1$. Hence $f\in F$.}

By the Poincar\'e--Birkhoff--Witt theorem, the canonical map
$\mathfrak s_F\to U(\mathfrak s_F)$ is injective
\cite[\textcolor{black}{Thm.}~13.1 and \textcolor{black}{Cor.}~13.3, p.~74]{EtingofPBW}.
The image of a central element commutes with both $X$ and $Y$,
and hence is a scalar by the preceding argument.
\textcolor{black}{The canonical map preserves the internal grading induced by
word length, and $\mathfrak s_F$ has only positive degrees.
Its image therefore contains no nonzero scalar.} Thus $Z(\mathfrak s_F)=0$.
This argument uses no restriction on the characteristic of $F$.

Finally, Proposition~\ref{prop:intersection} and scalar
extension of the free Lie presentation
\eqref{eq:Zl-presentation} give
\[
 \begin{aligned}
 \Gr_L^\bullet N\otimes_{\Zl}\Fl
 &\cong
 \bigoplus_{r\geq1}(Q_r\otimes_{\Z}\Fl)\\
 &\cong
 \frac{\LL_{\Fl}(A_1,B_1,\ldots,A_g,B_g)}{(\theta)}
 =
 \mathfrak s_{\Fl}.
 \end{aligned}
\]
Taking $F=\Fl$ proves the final assertion, including $\ell=2$.
\end{proof}
\subsection{Saturation of the adjoint condition}

\textcolor{black}{The saturation property below follows from
Asada's argument in \cite[\S\S2.1--2.2]{AsadaFiltration}.
We give a direct proof in the form needed here.}
Put
\[
 \Lambda_r=Q_r\otimes_{\Z}\Zl,
 \qquad
 \Lambda=\bigoplus_{r\geq1}\Lambda_r.
\]
By Proposition~\ref{prop:intersection}, we identify $\Lambda$
with $\Gr_L^\bullet N$ and
$\Lambda_r\otimes_{\Zl}\Ql$ with $\Gr_L^rP$,
compatibly with Lie brackets.
\textcolor{black}{Each $\Lambda_r$ is a free $\Zl$-module
of finite rank.}

\begin{lemma}[Saturation of the adjoint condition]
\label{lem:saturation}
Let $r\geq1$. If
$x\in\Lambda_r\otimes_{\Zl}\Ql$ satisfies
\[
 [x,\Lambda_1]\subseteq\Lambda_{r+1},
\]
then $x\in\Lambda_r$.
\end{lemma}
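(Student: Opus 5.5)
Write $x=\ell^{-m}y$ with $y\in\Lambda_r$ and $m\geq0$ minimal. We argue by contradiction, assuming $m\geq1$. Then $y\notin\ell\Lambda_r$, because otherwise $m$ could be lowered. The hypothesis reads
\[
 [y,\Lambda_1]\subseteq\ell^m\Lambda_{r+1}\subseteq\ell\Lambda_{r+1}.
\]
Bracketing is $\Zl$-bilinear and each $\Lambda_s$ is a free $\Zl$-module of finite rank, so we may reduce modulo $\ell$. Let $\bar y$ be the image of $y$ in $\Lambda_r\otimes_{\Zl}\Fl$. Then $\bar y\neq0$, since $\Lambda_r$ is free and $y\notin\ell\Lambda_r$. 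Moreover $[\bar y,\bar\Lambda_1]=0$ in $\Lambda_{r+1}\otimes_{\Zl}\Fl$.

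By \eqref{eq:Zl-presentation} and the last display of the proof of Lemma~\ref{lem:lie-center}, $\Lambda\otimes_{\Zl}\Fl\cong\mathfrak s_{\Fl}$ as graded Lie algebras, and this identification sends $\Lambda_1\otimes\Fl$ onto the degree-one span of $A_1,\dots,B_g$. The key observation is that $\mathfrak s_{\Fl}$ is generated as a Lie algebra by its degree-one part. Hence any element that commutes with every degree-one element is central, by the Jacobi identity and induction on the bracket length of generators: if $[\bar y,a]=[\bar y,b]=0$, then $[\bar y,[a,b]]=[[\bar y,a],b]+[a,[\bar y,b]]=0$. It follows that $\bar y\in Z(\mathfrak s_{\Fl})$, which is zero by Lemma~\ref{lem:lie-center}. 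This contradicts $\bar y\neq0$, so $m=0$ and $x\in\Lambda_r$.

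The one point that needs care is whether the identification of $\Gr_L^\bullet N\otimes\Fl$ with $\mathfrak s_{\Fl}$ is compatible with brackets and grading, including the bracket $\Lambda_r\times\Lambda_1\to\Lambda_{r+1}$ used above. We also need the bracket on $\Lambda\otimes\Ql=\Gr_L^\bullet P$ to restrict to the integral bracket on $\Lambda$. Both are asserted in the paragraph preceding the lemma and in Proposition~\ref{prop:intersection}, so the reduction step is just $\Zl$-bilinearity combined with torsion-freeness of $\Lambda_{r+1}$. Torsion-freeness guarantees that $\ell^{-m}[y,\lambda]\in\Lambda_{r+1}$ is equivalent to $[y,\lambda]\in\ell^m\Lambda_{r+1}$. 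The main obstacle is therefore only bookkeeping. The substantive input, triviality of the center modulo $\ell$ for every prime including $\ell=2$, has already been supplied by Lemma~\ref{lem:lie-center}.
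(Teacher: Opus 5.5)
Your proposal is correct and is essentially the paper's own proof: clear the minimal power of $\ell$ to get $y\in\Lambda_r\setminus\ell\Lambda_r$, and reduce the bracket condition modulo $\ell$. Then use generation in degree one and the Jacobi identity to make $\bar y$ central in $\Lambda\otimes_{\Zl}\Fl\cong\mathfrak s_{\Fl}$, which contradicts Lemma~\ref{lem:lie-center}.
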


\begin{proof}
\textcolor{black}{Suppose $x\notin\Lambda_r$.
Since $\Lambda_r$ is free of finite rank over $\Zl$,
there is a least integer $a\geq0$ such that
$y:=\ell^a x\in\Lambda_r$.
Then $a>0$, and minimality gives $y\notin\ell\Lambda_r$.
Thus its reduction $\bar y\in\Lambda_r/\ell\Lambda_r$
is nonzero.}

The bracket hypothesis gives
\[
 [y,\Lambda_1]
 \subseteq\ell^a\Lambda_{r+1}
 \subseteq\ell\Lambda_{r+1}.
\]
\textcolor{black}{Hence $\bar y$ commutes with the degree-one
part of $\Lambda\otimes_{\Zl}\Fl$.
Since this Lie algebra is generated in degree one by
\eqref{eq:Zl-presentation}, the Jacobi identity shows that
$\bar y$ is central, contradicting Lemma~\ref{lem:lie-center}.}
\end{proof}

\subsection{Integral conjugation}

The saturation lemma allows us to correct a conjugating
element successively by elements of $N$. Closedness of
$N$ in $P$ then completes the argument.

\begin{proposition}[Integral conjugation]\label{prop:normalizer}
For $g\geq2$,
\[
 \{u\in P:u^{-1}Nu\subseteq N\}=N.
\]
In particular, $N_P(N)=N$.
\end{proposition}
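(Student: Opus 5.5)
The inclusion $N\subseteq\{u\in P:u^{-1}Nu\subseteq N\}$ is clear. For the reverse inclusion, fix $u\in P$ with $u^{-1}Nu\subseteq N$. We will construct inductively elements $v_r\in N$ such that $w_r:=uv_r^{-1}$ (or a suitable product) lies in $L^{r}P$ and still normalizes $N$ in the weak sense $w_r^{-1}Nw_r\subseteq N$. The set of such elements is stable under right multiplication by $N$: if $w^{-1}Nw\subseteq N$ and $v\in N$, then $(wv)^{-1}N(wv)=v^{-1}(w^{-1}Nw)v\subseteq N$. So we may correct $u$ freely by elements of $N$.

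\emph{Base step.} We first get $u$ into $L^2P$. Since $P/L^2P\cong\Lambda_1\otimes\Ql$ is abelian, conjugation by $u$ acts trivially there, so the first-degree condition yields nothing. Instead, write the class $x$ of $u$ in $\Gr^1_LP=\Lambda_1\otimes\Ql$. For $h\in N$, the commutator $u^{-1}huh^{-1}=(u^{-1}hu)h^{-1}$ lies in $N\cap L^2P=L^2N$ by Proposition~\ref{prop:intersection}, and its class in $\Gr^2$ is $\pm[x,\bar h]$. Hence $[x,\Lambda_1]\subseteq\Lambda_2$, and Lemma~\ref{lem:saturation} with $r=1$ gives $x\in\Lambda_1$. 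Since $q_1$ is surjective, we choose $v_1\in N$ with the same class, and $uv_1^{-1}\in L^2P$.

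\emph{Inductive step.} Suppose $w=u v^{-1}\in L^rP$ with $v\in N$ and $w^{-1}Nw\subseteq N$. Let $x\in\Lambda_r\otimes\Ql$ be its class. For $h\in N$, the element $w^{-1}hwh^{-1}$ lies in $N$ and in $L^{r+1}P$, hence in $L^{r+1}N$, and its image in $\Gr^{r+1}$ is $\pm[x,\bar h]$ (BCH, using that $w\in L^rP$). Thus $[x,\Lambda_1]\subseteq\Lambda_{r+1}$, Lemma~\ref{lem:saturation} gives $x\in\Lambda_r=L^rN/L^{r+1}N$, and we pick $v'\in L^rN$ with that class, so $wv'^{-1}\in L^{r+1}P$. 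The new correcting product is $v'v$. Since $v'\in L^rN$, the partial products $v_r$ form a Cauchy sequence in $N$ for the filtration $L^\bullet N$. These subgroups form a basis of neighbourhoods of $1$: $N$ is topologically finitely generated pro-$\ell$, so the $L^rN$ are open, and $\bigcap_r L^rN=1$. By compactness, $v_r\to v_\infty\in N$.

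\emph{Conclusion and main obstacle.} Then $uv_\infty^{-1}\in\bigcap_r L^rP=\{1\}$, since $P=\varprojlim P/L^rP$, so $u=v_\infty\in N$. The closedness of $N$ in $P$ and continuity of $i$ from Proposition~\ref{prop:intersection} justify passing to the limit inside $P$. The statement $N_P(N)=N$ follows immediately. The step we expect to need the most care is the identification of the class of the commutator $w^{-1}hwh^{-1}$ in $\Gr^{r+1}_LP$ with $\pm[x,\bar h]$, together with the verification that it lands in the integral lattice $\Lambda_{r+1}$. This requires using $N\cap L^{r+1}P=L^{r+1}N$ exactly, rather than only the rational statement. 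Openness of $L^rN$ in $N$ should be checked as well; if it fails, we would argue instead via the quotients $P/L^rP$ and take limits there.
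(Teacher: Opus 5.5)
Your argument is essentially the paper's. You correct $u$ by elements of $N$ one lower-central degree at a time: the identity $N\cap L^{r+1}P=L^{r+1}N$ shows that the class $x$ of the current conjugator satisfies $[x,\Lambda_1]\subseteq\Lambda_{r+1}$, and Lemma~\ref{lem:saturation} then makes $x$ integral. Correcting on the right instead of the left, and treating $r=1$ separately, are only cosmetic differences.

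One justification is wrong, however. The subgroups $L^rN$ are \emph{not} open in $N$ for $r\geq2$: $N/L^2N\cong\Zl^{2g}$ is infinite, and an open subgroup of a profinite group has finite index. So they do not form a basis of open neighbourhoods of $1$.

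What you actually need is weaker and true: every open normal subgroup $W\triangleleft N$ contains some $L^mN$, because $N/W$ is a finite $\ell$-group and hence nilpotent. This already makes $(v_r)$ convergent in $N$.

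Alternatively, as in the paper and as in your fallback, you can skip convergence in $N$ altogether. Since $uv_r^{-1}\in L^{r+1}P$ and each finite-dimensional quotient $U_\alpha(\Ql)$ of $P$ kills some $L^{t_\alpha}P$, you get $v_r\to u$ in $P$. Closedness of $N$ in $P$ then gives $u\in N$ directly.
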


\begin{proof}
Every element of $N$ conjugates $N$ onto itself.
Conversely, suppose that $u\in P$ satisfies
$u^{-1}Nu\subseteq N$.

Suppose inductively that
\[
 \textcolor{black}{u_r}\in L^rP,\qquad \textcolor{black}{u_r}^{-1}N\textcolor{black}{u_r}\subseteq N,
\]
initially with $r=1$ and $\textcolor{black}{u_1}=u$.
Let $x_r\in\Lambda_r\otimes_{\Zl}\Ql$ be the class of $\textcolor{black}{u_r}$
in $L^rP/L^{r+1}P$.
\textcolor{black}{For every $h\in N$, the induction hypothesis
and Proposition~\ref{prop:intersection} give
\[
 [\textcolor{black}{u_r}^{-1},h]\in N\cap L^{r+1}P=L^{r+1}N.
\]
Its class in $\Gr_L^{r+1}P$ is $-[x_r,\bar h]$,
where $\bar h\in\Lambda_1$ is the class of $h$.
Since $N\to\Lambda_1$ is surjective, we obtain
\[
 [x_r,\Lambda_1]\subseteq\Lambda_{r+1},
\]
and Lemma~\ref{lem:saturation} gives $x_r\in\Lambda_r$.}

Choose $n_r\in L^rN$ with class $x_r$, and put
$\textcolor{black}{u_{r+1}}=n_r^{-1}\textcolor{black}{u_r}$.
The classes of $n_r$ and $\textcolor{black}{u_r}$ agree in
$L^rP/L^{r+1}P$, so $\textcolor{black}{u_{r+1}}\in L^{r+1}P$.
Moreover,
\[
 \textcolor{black}{u_{r+1}}^{-1}N\textcolor{black}{u_{r+1}}
 =
 \textcolor{black}{u_r}^{-1}n_rNn_r^{-1}\textcolor{black}{u_r}
 =
 \textcolor{black}{u_r}^{-1}N\textcolor{black}{u_r}
 \subseteq N.
\]
Thus the construction continues.

Put $\textcolor{black}{v_r}=n_1\cdots n_r\in N$.
Iterating this recursion from $\textcolor{black}{u_1}=u$ gives
\begin{equation}\label{eq:successive-correction}
 \textcolor{black}{v_r}^{-1}u
 =n_r^{-1}\cdots n_1^{-1}u
 =\textcolor{black}{u_{r+1}}\in L^{r+1}P.
\end{equation}
\textcolor{black}{For each finite-dimensional unipotent quotient
$U_\alpha$ of $\cP$, nilpotence implies that the projection
$q_\alpha:P\to U_\alpha(\Ql)$ kills $L^{t_\alpha}P$
for some $t_\alpha\geq1$.
By \eqref{eq:successive-correction},
\[
 q_\alpha(\textcolor{black}{v_r})=q_\alpha(u)
 \qquad\text{whenever }r+1\geq t_\alpha.
\]
Thus $\textcolor{black}{v_r}\to u$ in the inverse-limit topology.
Since $\textcolor{black}{v_r}\in N$ and $N$ is closed in $P$ by
Proposition~\ref{prop:intersection}, we obtain $u\in N$.}

\end{proof}

\section{Moving markings and comparison of sections}
\label{sec:moving}

Throughout this section, $k/\Q$ is finitely generated,
$g\geq2$, $n\geq1$, and $\ell$ is a prime. Retain $\M$, $\cX$,
and the extensions of Section~\ref{sec:conventions}.
We use the comparison $\Phi:E_\ell\to E_P$ constructed
in Subsection~\ref{subsec:weighted-pullback}. It restricts
to $i:N\to P$ on kernels and induces the identity on $\Gamma$.
Moving a marking will relate its section action to inner
conjugation in $N$ and $P$.

\subsection{The moving-mark family}
\label{subsec:moving-family}

Fix a pointed curve $(C;x_1,\ldots,x_n)$ over $\bar k$
and an index $j\in\{1,\ldots,n\}$. Put
\[
 U_j=C\setminus\{x_i:i\ne j\}.
\]
The family $\pr_2:C\times U_j\to U_j$, with sections
\[
 y\longmapsto(x_i,y)\quad(i\ne j),\qquad
 y\longmapsto(y,y)\quad(i=j),
\]
defines the classifying morphism
\begin{equation}\label{eq:moving-map}
 i_j:U_j\longrightarrow \M_{\bar k},\qquad
 y\longmapsto
 (C;x_1,\ldots,x_{j-1},y,x_{j+1},\ldots,x_n).
\end{equation}
The markings are pairwise disjoint by the definition of $U_j$.
Thus the pullback of $\cX$ along $i_j$ is $C\times U_j$,
and the $j$th tautological section pulls back to the diagonal.

Use the point $x_j\in U_j$ and the corresponding pointed
curve $b\in \M_{\bar k}$ as basepoints, with $x_j\in C$
as the basepoint of the fiber. Choose compatible paths to
the basepoints of Section~\ref{sec:conventions}, and write
\[
 \Delta=\textcolor{black}{\pi_1}(C,x_j),\quad
 E=\textcolor{black}{\pi_1}(\cX,x_j),\quad
 \Gamma=\textcolor{black}{\pi_1}(\M,b),\quad
 \Gamma_j=\textcolor{black}{\pi_1}(U_j,x_j).
\]
Let $\phi_j:\Gamma_j\to\Gamma$ be induced by $i_j$
followed by $\M_{\bar k}\to \M$, and let
$\lambda_j:\Gamma_j\to\Delta$ be induced by $U_j\hookrightarrow C$.

The map $\lambda_j$ is surjective. Indeed, a connected
finite \'etale cover of $C$ is integral, and deleting the
finite inverse image of $C\setminus U_j$ leaves it connected.
The surjectivity criterion
\cite[\textcolor{black}{Lem.}~58.4.1, Tag~0BN6]{Stacks} therefore applies.
Consequently,
\[
 q_j:\Gamma_j\xrightarrow{\lambda_j}\Delta
 \twoheadrightarrow N=\Delta^{(\ell)}
\]
is also surjective.

\subsection{Compatible pullback extensions}
\label{subsec:moving-pullbacks}

Pulling back the canonical comparison along $\phi_j$ gives
\[
 \Phi_j:E_\ell\times_\Gamma\Gamma_j
 \longrightarrow E_P\times_\Gamma\Gamma_j,
 \qquad (\textcolor{black}{x},\beta)
 \longmapsto(\Phi(\textcolor{black}{x}),\beta).
\]
Both pullbacks have quotient $\Gamma_j$, and their kernels
are $N$ and $P$, respectively.

\begin{lemma}[Compatible product extensions]
\label{lem:moving-extension}
The product family gives topological isomorphisms
\begin{equation}\label{eq:moving-extension}
 \begin{aligned}
 E_\ell\times_\Gamma\Gamma_j&\cong N\times\Gamma_j,\\
 E_P\times_\Gamma\Gamma_j&\cong P\times\Gamma_j.
 \end{aligned}
\end{equation}
They induce the identity on the kernels and quotients.
Under these identifications, $\Phi_j$ is $i\times\mathrm{id}$,
so the comparison becomes
\[
\begin{tikzcd}[column sep=small]
1 \arrow[r] &
N \arrow[r] \arrow[d,"i"] &
N\times\Gamma_j \arrow[r,"\pr_2"]
  \arrow[d,"i\times\mathrm{id}"] &
\Gamma_j \arrow[r] \arrow[d,equal] & 1 \\
1 \arrow[r] &
P \arrow[r] &
P\times\Gamma_j \arrow[r,"\pr_2"] &
\Gamma_j \arrow[r] & 1.
\end{tikzcd}
\]
\end{lemma}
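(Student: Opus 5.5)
The plan is to build both identifications from the product family. The pullback of $\cX$ along $i_j$ is $C\times U_j$, so first compute $\pi_1(C\times U_j)$ and compare it with the fiber products. Everything is over $\bar k$, so the base change $\M_{\bar k}\to\M$ is harmless.

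\emph{Product decomposition.} By the Künneth formula for étale fundamental groups of a product of connected varieties over an algebraically closed field of characteristic zero, we have
\[
\pi_1(C\times U_j,(x_j,x_j))\cong\Delta\times\Gamma_j.
\]
This is valid in characteristic zero even for the noncompact factor $U_j$, since $C$ is proper. Under this isomorphism, $\pr_2$ induces the projection to $\Gamma_j$, and the fiber inclusion $C\times\{x_j\}$ induces the inclusion of $\Delta$.

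\emph{Map to the fiber product.} The cartesian square $C\times U_j\to\cX$ over $i_j:U_j\to\M$ gives a homomorphism
\[
\Delta\times\Gamma_j\longrightarrow E\times_\Gamma\Gamma_j .
\]
Both groups are extensions of $\Gamma_j$ by $\Delta$. The source sequence is split exact. The target sequence is exact by Lemma~\ref{lem:universal-homotopy} pulled back along $\phi_j$. The map is the identity on the kernel $\Delta$, since the fiber over the basepoint is the same curve $C$ with the chosen compatible paths. It is also the identity on the quotient. By the five lemma for profinite groups it is a continuous bijection of compact groups, hence a topological isomorphism. Quotienting by the characteristic subgroup $R_\ell$ gives $E_\ell\times_\Gamma\Gamma_j\cong N\times\Gamma_j$.

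\emph{Unipotent extension.} Set $\sigma(\beta)=\Phi(s(\beta))$, where $s:\Gamma_j\to E_\ell\times_\Gamma\Gamma_j$ is the splitting $\beta\mapsto(1,\beta)$ just obtained. Then $\sigma$ is a continuous section of $E_P\times_\Gamma\Gamma_j\to\Gamma_j$, so the pullback is a semidirect product $P\rtimes\Gamma_j$. It is a direct product exactly when $\sigma(\Gamma_j)$ centralizes $P$. The conjugation action of $\sigma(\beta)$ on $P$ is a continuous automorphism. It restricts on $i(N)$ to conjugation by $s(\beta)$, which is trivial on $N$ because the first extension is a direct product. The action of $E_P$ on $P$ is algebraic: it comes from $\cG_{\cX}$ acting on $\cP$ by conjugation. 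Since $i(\Pi_g)$ is Zariski dense in $\cP$, an algebraic automorphism of $\cP$ fixing it pointwise is the identity. Hence $\sigma(\Gamma_j)$ commutes with $P$, and $(p,\beta)\mapsto p\,\sigma(\beta)$ is an isomorphism $P\times\Gamma_j\to E_P\times_\Gamma\Gamma_j$. It is topological because the inverse $(e,\beta)\mapsto(e\,\sigma(\beta)^{-1},\beta)$ is continuous. By construction $\Phi_j$ becomes $i\times\mathrm{id}$.

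\emph{Main obstacle.} The delicate step is the Künneth identification with the correct basepoints. I would rely on the standard statement, for instance SGA1 X.1.7 in characteristic zero, or on tame/Künneth results for a proper curve times a smooth curve. One must also check that the diagonal basepoint paths are compatible with those chosen in Section~\ref{sec:conventions}. Otherwise the kernel identification is only up to an inner automorphism of $\Delta$, which would still yield isomorphisms but not the identity on kernels.
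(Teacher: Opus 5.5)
Your proposal is correct and follows essentially the paper's own proof. The SGA1 product formula and the short five lemma identify $E\times_\Gamma\Gamma_j$ with $\Delta\times\Gamma_j$, and quotienting by $R_\ell$ gives $N\times\Gamma_j$. The image under $\Phi$ of the constant section centralizes $P$, because it fixes the Zariski-dense completion image pointwise and acts through $\mathcal G_{\mathcal X}$, which yields $P\times\Gamma_j$ with $\Phi_j=i\times\mathrm{id}$.

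You compress one step: the identification $(E\times_\Gamma\Gamma_j)/R_\ell\cong E_\ell\times_\Gamma\Gamma_j$, which the paper checks with a second five-lemma diagram. Also, your "$i(\Pi_g)$" should be read as the completion image $\rho(\Pi_g)$, or equivalently the image of $\Delta$.
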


\begin{proof}
Since $C$ is proper over $\bar k$, the product formula
\cite[Expos\'e~X, \textcolor{black}{Cor.}~1.7]{SGA1} identifies
$\textcolor{black}{\pi_1}(C\times U_j,(x_j,x_j))$ with
$\Delta\times\Gamma_j$.
The pulled-back family induces the commutative diagram
\[
\begin{tikzcd}[column sep=small]
1 \arrow[r] &
\Delta \arrow[r] \arrow[d,equal] &
\Delta\times\Gamma_j \arrow[r,"\pr_2"]
  \arrow[d,"\psi_j"] &
\Gamma_j \arrow[r] \arrow[d,equal] & 1 \\
1 \arrow[r] &
\Delta \arrow[r] &
E\times_\Gamma\Gamma_j \arrow[r,"\pr_2"] &
\Gamma_j \arrow[r] & 1.
\end{tikzcd}
\]
Both rows are exact by Section~\ref{sec:conventions}.
The short five lemma makes $\psi_j$ an isomorphism;
it is a topological isomorphism because both groups are profinite.

Put $R_\ell=\ker(\Delta\to N)$, so $E_\ell=E/R_\ell$.
Embed $R_\ell$ in $E\times_\Gamma\Gamma_j$ by
$r\mapsto(r,1)$.
Since $\psi_j$ is the identity on $\Delta$, it induces
\[
 \overline\psi_j:
 (\Delta\times\Gamma_j)/(R_\ell\times\{1\})
 \longrightarrow (E\times_\Gamma\Gamma_j)/R_\ell.
\]
The quotient map $E\to E_\ell$ also induces
\[
 \nu_j:(E\times_\Gamma\Gamma_j)/R_\ell
 \longrightarrow E_\ell\times_\Gamma\Gamma_j,
 \qquad
 (e,\beta)R_\ell\longmapsto(eR_\ell,\beta).
\]
These maps give the diagram
\[
\begin{tikzcd}[column sep=small]
1 \arrow[r] &
N \arrow[r] \arrow[d,equal] &
(\Delta\times\Gamma_j)/(R_\ell\times\{1\})
  \arrow[r] \arrow[d,"\overline\psi_j"] &
\Gamma_j \arrow[r] \arrow[d,equal] & 1 \\
1 \arrow[r] &
N \arrow[r] \arrow[d,equal] &
(E\times_\Gamma\Gamma_j)/R_\ell
  \arrow[r] \arrow[d,"\nu_j"] &
\Gamma_j \arrow[r] \arrow[d,equal] & 1 \\
1 \arrow[r] &
N \arrow[r] &
E_\ell\times_\Gamma\Gamma_j \arrow[r] &
\Gamma_j \arrow[r] & 1.
\end{tikzcd}
\]
The first two rows are the quotient extensions, and the
third is a pullback extension. Thus all rows are exact.
The short five lemma shows that $\overline\psi_j$ and
$\nu_j$ are isomorphisms, and profiniteness makes them
topological isomorphisms.
Finally, the quotient homomorphism $\Delta\to N$ induces
\[
 (\Delta\times\Gamma_j)/(R_\ell\times\{1\})
 \xrightarrow{\sim}N\times\Gamma_j.
\]
This proves the first identification in
\eqref{eq:moving-extension}.

\textcolor{black}{Let
$\tau_j:N\times\Gamma_j\xrightarrow{\sim}
E_\ell\times_\Gamma\Gamma_j$
denote the resulting product isomorphism.}
The constant section $y\mapsto(x_j,y)$ induces
\[
 t_j:\Gamma_j\longrightarrow E_\ell\times_\Gamma\Gamma_j,
 \qquad t_j(\beta)=\tau_j(1,\beta).
\]
Its image centralizes the kernel $N$.
Set $t_{j,P}=\Phi_j\circ t_j$.
Conjugation by $t_{j,P}(\beta)$ therefore fixes $i(N)$
\textcolor{black}{pointwise. By the pullback definition of $E_P$, this action
is conjugation by the image of $t_{j,P}(\beta)$ in
$\cG_{\cX}(\Ql)$. Since $\cP$ is normal in $\cG_{\cX}$, the action
is an algebraic automorphism of $\cP$.}
Since $i(N)$ contains the Zariski-dense completion image
of $\Delta$, this automorphism is the identity on $\cP$.
Thus $t_{j,P}(\Gamma_j)$ centralizes $P$.

It follows that
\[
 \tau_{j,P}:P\times\Gamma_j
 \longrightarrow E_P\times_\Gamma\Gamma_j,
 \qquad (p,\beta)\longmapsto p\,t_{j,P}(\beta)
\]
is a homomorphism. Every element of the target has a unique
such expression, and the inverse is
\[
 \textcolor{black}{\xi}\longmapsto
 \bigl(
 \textcolor{black}{\xi}\,
 t_{j,P}(\pr_2(\textcolor{black}{\xi}))^{-1},
 \pr_2(\textcolor{black}{\xi})
 \bigr).
\]
where the first coordinate is identified with the kernel $P$.
Both maps are continuous, so $\tau_{j,P}$ is a topological
isomorphism. By construction,
\[
 \Phi_j\circ\tau_j
 =\tau_{j,P}\circ(i\times\mathrm{id}),
\]
which proves the required compatibility.
\end{proof}

\subsection{The tautological section and its inner action}
\label{subsec:moving-inner}

\textcolor{black}{The constant section used above gives the product splittings.
The tautological section $s_j$ pulls back to the diagonal. We now compute its action on the kernels.
Retain $s_j,s_{j,P}$ and their conjugation actions $\alpha_j$
from Subsection~\ref{subsec:difference-cocycles}.}

The diagonal $y\mapsto(y,y)$ has first projection
$U_j\hookrightarrow C$ and second projection the identity. \textcolor{black}{Recall from
Subsection~\ref{subsec:moving-family} that
$\Gamma_j=\pi_1(U_j,x_j)$ and that
$q_j:\Gamma_j\to N$ is the composite
$\Gamma_j\xrightarrow{\lambda_j}\Delta\twoheadrightarrow N$.}
Under the identifications of Lemma~\ref{lem:moving-extension},
the pullbacks of $s_j$ and $s_{j,P}$ are therefore
\begin{equation}\label{eq:diagonal-group}
 \begin{aligned}
 \textcolor{black}{\phi_j^*s_j:
 \Gamma_j\longrightarrow N\times\Gamma_j},
 &\qquad
 \beta\longmapsto(q_j(\beta),\beta),\\
 \textcolor{black}{\phi_j^*s_{j,P}:
 \Gamma_j\longrightarrow P\times\Gamma_j},
 &\qquad
 \beta\longmapsto(i(q_j(\beta)),\beta).
 \end{aligned}
\end{equation}

\begin{lemma}[The moving-mark action]\label{lem:inner-action}
For every $\beta\in\Gamma_j$,
\begin{equation}\label{eq:inner-moving}
 \begin{aligned}
 \alpha_j(\phi_j(\beta))&=\Ad_{q_j(\beta)}
     &&\text{on }N,\\
 \alpha_j(\phi_j(\beta))&=\Ad_{i(q_j(\beta))}
     &&\text{on }P.
 \end{aligned}
\end{equation}
Consequently, for every $h\in N$, some $\beta\in\Gamma_j$
induces $\Ad_h$ on $N$ and $\Ad_{i(h)}$ on $P$.
\end{lemma}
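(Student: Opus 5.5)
We will compute the action directly in the product coordinates of Lemma~\ref{lem:moving-extension}. The action $\alpha_j(\gamma)$ is conjugation by $s_j(\gamma)$ on the kernel $N\subseteq E_\ell$. For $\gamma=\phi_j(\beta)$, the element $s_j(\phi_j(\beta))\in E_\ell$ is the first component of the pullback section $\phi_j^*s_j(\beta)\in E_\ell\times_\Gamma\Gamma_j$. The kernel $N$ embeds in this pullback as $h\mapsto(h,1)$, and conjugation there is computed componentwise. Hence conjugating $h$ by $s_j(\phi_j(\beta))$ in $E_\ell$ is the same as conjugating $(h,1)$ by $\phi_j^*s_j(\beta)$ in $E_\ell\times_\Gamma\Gamma_j$.

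Next transport this through $\tau_j$. By \eqref{eq:diagonal-group}, $\phi_j^*s_j(\beta)$ corresponds to $(q_j(\beta),\beta)\in N\times\Gamma_j$, and $\tau_j$ is the identity on the kernel $N$. In the direct product,
\[
 (q_j(\beta),\beta)(h,1)(q_j(\beta),\beta)^{-1}
 =(q_j(\beta)hq_j(\beta)^{-1},1),
\]
which gives $\alpha_j(\phi_j(\beta))=\Ad_{q_j(\beta)}$ on $N$. The same computation with $\tau_{j,P}$, the second line of \eqref{eq:diagonal-group}, and the fact that $\tau_{j,P}$ restricts to the identity on $P$ gives $\Ad_{i(q_j(\beta))}$ on $P$. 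Alternatively, the $P$-statement follows from the $N$-statement by continuity and density: the automorphism $\alpha_j(\phi_j(\beta))\circ\Ad_{i(q_j(\beta))}^{-1}$ of $\cP$ is algebraic and fixes the Zariski-dense subgroup $i(N)$, exactly as in the proof of Lemma~\ref{lem:moving-extension}. The compatibility $\Phi\circ s_j=s_{j,P}$ together with $\Phi_j=i\times\mathrm{id}$ makes the two versions consistent.

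The step that needs the most care is the bookkeeping of basepoints and paths. The identification of the pulled-back tautological section with $\beta\mapsto(q_j(\beta),\beta)$ in \eqref{eq:diagonal-group} depends on using $x_j$ both as the fiber basepoint and as the basepoint of $U_j$. With those choices the diagonal passes through $(x_j,x_j)$, and no extra conjugating path appears. Since \eqref{eq:diagonal-group} is already recorded, I will simply invoke it. A different choice of path would only change $s_j$ within its $N$-conjugacy class, and hence change $\alpha_j$ by an inner automorphism, which does not affect the argument.

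For the final assertion, use the surjectivity of $q_j:\Gamma_j\to N$ proved in Subsection~\ref{subsec:moving-family}. Given $h\in N$, choose $\beta$ with $q_j(\beta)=h$. Then $\alpha_j(\phi_j(\beta))$ is $\Ad_h$ on $N$ and $\Ad_{i(h)}$ on $P$.
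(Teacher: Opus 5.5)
Your proposal is correct and follows the paper's argument: you compute conjugation on the kernel in the product coordinates of Lemma~\ref{lem:moving-extension} using the graphs in \eqref{eq:diagonal-group}, then use the surjectivity of $q_j$ for the final assertion. Your explicit reduction from conjugation by $s_j(\phi_j(\beta))$ in $E_\ell$ to conjugation in the pullback, and your alternative Zariski-density argument for $P$, are both valid; they only spell out steps the paper leaves implicit.
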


\begin{proof}
In either product extension, conjugation by $(a,\beta)$
on the kernel is
\[
 (a,\beta)(b,1)(a,\beta)^{-1}=(aba^{-1},1).
\]
Apply this to the two graphs in \eqref{eq:diagonal-group}.
The final assertion follows from the surjectivity of $q_j$.
\end{proof}

\subsection{Lifting unipotent conjugacy to pro-\texorpdfstring{$\ell$}{ell}
conjugacy}
\label{subsec:conjugacy-lifting}

We now combine the moving-mark action with
Proposition~\ref{prop:normalizer} to recover conjugacy in $N$.

\begin{proposition}[A singleton comparison fiber]
\label{prop:singleton}
Let $s:\Gamma\to E_\ell$ be a continuous section.
Its image $\Phi\circ s$ is conjugate to $s_{j,P}$ by an
element of $P$ if and only if $s$ is conjugate to $s_j$
by an element of $N$.
Equivalently, the comparison of section classes has the
singleton fiber $\{[s_j]\}$ over $[s_{j,P}]$.
\end{proposition}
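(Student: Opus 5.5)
The easy direction is formal. If $s(\gamma)=v^{-1}s_j(\gamma)v$ with $v\in N$, then applying $\Phi$ and using $\Phi|_N=i$ gives $\Phi(s(\gamma))=i(v)^{-1}s_{j,P}(\gamma)\,i(v)$. So $\Phi\circ s$ is $P$-conjugate to $s_{j,P}$.

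For the converse, let $z(\gamma)=s(\gamma)s_j(\gamma)^{-1}\in N$ be the difference cocycle of Subsection~\ref{subsec:difference-cocycles}. By \eqref{eq:comparison-coboundary}, the hypothesis supplies $u\in P$ with
\[
 i\bigl(z(\gamma)\bigr)=u^{-1}\alpha_j(\gamma)(u)\qquad(\gamma\in\Gamma).
\]
The plan has two steps. First, show that $u^{-1}Nu\subseteq N$ inside $P$, identifying $N$ with $i(N)$ via Proposition~\ref{prop:intersection}. Then Proposition~\ref{prop:normalizer} gives $u=i(v)$ for some $v\in N$. Injectivity of $i$ then turns the displayed identity into $z(\gamma)=v^{-1}\alpha_j(\gamma)(v)$ in $N$, which is exactly the criterion for $[s]=[s_j]$ in $\Sec_{\M}(N)$.

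The main step is the normalizing property, and for it I would use the moving-mark family. Fix $h\in N$. By Lemma~\ref{lem:inner-action}, choose $\beta\in\Gamma_j$ with $\alpha_j(\phi_j(\beta))=\Ad_{h'}$ on $N$ and $\Ad_{i(h')}$ on $P$, where $h'=q_j(\beta)$. Since $q_j$ is surjective, $h'$ may be taken to be any prescribed element of $N$. Put $\gamma=\phi_j(\beta)$. The displayed identity gives
\[
 i\bigl(z(\gamma)\bigr)=u^{-1}\,i(h')\,u\,i(h')^{-1}.
\]
Hence $u^{-1}i(h')u=i\bigl(z(\gamma)h'\bigr)\in i(N)$. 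As $h'$ ranges over all of $N$, this yields $u^{-1}i(N)u\subseteq i(N)$, as required.

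The only delicate points are bookkeeping. One must check that the actions $\alpha_j$ on $N$ and on $P$ are computed with the same basepoint conventions as in Subsection~\ref{subsec:moving-family}, so that Lemma~\ref{lem:inner-action} applies verbatim to the cocycle identity over $\Gamma$. One must also check that Proposition~\ref{prop:normalizer} is being applied to the same embedded copy $i(N)\subseteq P$. Both follow from the compatible choices of basepoints and paths fixed in Section~\ref{sec:moving}. The equivalence with the singleton-fiber formulation is then immediate from the definition of $\Phi_{\M}$.
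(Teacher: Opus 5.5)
Your proposal is correct and follows the paper's proof essentially step for step. The moving-mark action of Lemma~\ref{lem:inner-action} gives $u^{-1}i(N)u\subseteq i(N)$, Proposition~\ref{prop:normalizer} then yields $u=i(v)$ with $v\in N$, and injectivity of $i$ from Proposition~\ref{prop:intersection} turns the $P$-coboundary identity into the $N$-conjugacy $s(\gamma)=v^{-1}s_j(\gamma)v$.
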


\begin{proof}
The implication from $N$-conjugacy to $P$-conjugacy follows
from $\Phi|_N=i$.
Conversely, suppose that a fixed $u\in P$ satisfies
\[
 \Phi(s(\gamma))=u^{-1}s_{j,P}(\gamma)u
 \qquad(\gamma\in\Gamma).
\]
Since $s(\gamma)$ and $s_j(\gamma)$ have the same image
in $\Gamma$, their difference
\[
 z(\gamma)=s(\gamma)s_j(\gamma)^{-1}
\]
belongs to $N$.
Applying $\Phi$ gives
\[
 i(z(\gamma))=u^{-1}\alpha_j(\gamma)(u).
\]
For any $h\in N$, choose $\beta\in\Gamma_j$ with
$q_j(\beta)=h$.
By Lemma~\ref{lem:inner-action}, evaluation at
$\gamma=\phi_j(\beta)$ gives
\[
 i(z(\phi_j(\beta)))
 =u^{-1}i(h)u\,i(h)^{-1}.
\]
Consequently,
\[
 u^{-1}i(h)u=i\bigl(z(\phi_j(\beta))h\bigr)\in i(N).
\]
Thus $u^{-1}i(N)u\subseteq i(N)$.
Proposition~\ref{prop:normalizer}, identifying $N$ with
its image in $P$, gives $u=i(v)$ for some $v\in N$.
The compatibility of the actions then yields
\[
 i(z(\gamma))
 =i\bigl(v^{-1}\alpha_j(\gamma)(v)\bigr).
\]
Since $i$ is injective by Proposition~\ref{prop:intersection},
\[
 s(\gamma)=v^{-1}s_j(\gamma)v
 \qquad(\gamma\in\Gamma),
\]
as required.
\end{proof}

\section{Unipotent sections over the moduli stack}
\label{sec:unipotent-sections}

Throughout this section, assume $g\geq3$ and $n\geq1$.
\textcolor{black}{We compute the unipotent section classes over $\M$,
following the method of \cite[\textcolor{black}{Sec.}~10]{WatanabeHyper2019}
and \cite[\textcolor{black}{Secs.}~14 and~18]{HainRational}.
We first determine the geometric graded sections, then compute
the cohomology and obstructions governing their arithmetic lifts.}

\subsection{Geometric graded sections}
\label{subsec:geometric-graded-sections}

Retain the notation of
Subsection~\ref{subsec:weighted-curve-sequences}.
The geometric completion sequence gives the
$R=\GSp(H)$-equivariant exact sequence
\begin{equation}\label{eq:geometric-graded-extension}
 0\longrightarrow\Gr^W_\bullet\p
 \longrightarrow\Gr^W_\bullet\u_{\cX}^{\geom}
 \xrightarrow{\pi_*}\Gr^W_\bullet\u_{\M}^{\geom}
 \longrightarrow0.
\end{equation}
The tautological sections induce graded sections
$\sigma_1,\ldots,\sigma_n$. Put
\[
 \u_{T,\leq4}^{\geom}
 =\Gr^W_\bullet(\u_T^{\geom}/W_{-5}\u_T^{\geom})
 \quad(T=\M,\cX),
 \qquad
 \p_{\leq4}=\Gr^W_\bullet(\p/W_{-5}\p).
\]
The induced projection
\begin{equation}\label{eq:four-weight-geometric-projection}
 \pi_{\leq4}:\u_{\cX,\leq4}^{\geom}
 \longrightarrow\u_{\M,\leq4}^{\geom}
\end{equation}
has kernel $\p_{\leq4}$.

\begin{proposition}\label{prop:geometric-graded-sections}
The $R$-equivariant graded Lie algebra sections of
$\pi_{\leq4}$ are exactly those induced by
$\sigma_1,\ldots,\sigma_n$.
The sections of \eqref{eq:geometric-graded-extension}
are likewise exactly $\sigma_1,\ldots,\sigma_n$.
\end{proposition}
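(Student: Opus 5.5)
The plan is to follow Hain's geometric computation of graded sections \cite[Secs.~10--14]{HainRational} for $g\geq4$, and to use a hyperelliptic restriction for $g=3$, as in \cite{WatanabeHyper2019,WatanabeHyper2025,LuoWatanabeBirman}. First I treat weight $-1$. An $R$-equivariant graded section $\sigma$ of $\pi_{\leq4}$ is determined on generators by its weight $-1$ component
\[
 \Gr^W_{-1}\u_{\M}^{\geom}\longrightarrow\Gr^W_{-1}\u_{\cX}^{\geom}\cong\Gr^W_{-1}\u_{\M}^{\geom}\oplus H.
\]
For $n\geq1$ and $g\geq3$ one has $\Gr^W_{-1}\u_{\M}^{\geom}\cong\Lambda^3_0H\oplus H^{\oplus n}$, up to the standard identification, so the component into $H$ is an element of $\Hom_R(\Lambda^3_0H\oplus H^{n},H)$. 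Since $\Lambda^3_0H$ contains no copy of $H$, this space is spanned by the $n$ coordinate projections. Hence the weight $-1$ part is a vector $(c_1,\ldots,c_n)\in\Ql^n$, and $\sigma_j$ corresponds to the $j$th standard basis vector.

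The main step is to impose bracket compatibility in weights $-2$ through $-4$ and force $c$ to be a standard basis vector. I would use Hain's presentation of $\Gr^W_\bullet\u_{\M}^{\geom}$ through weight $-2$, with quadratic relations involving $\Lambda^2H$ and $\Ql(1)$ and the relations among the $H$-summands coming from the collision of markings. Computing $\sigma$ on brackets $[H_a,H_b]$ of the $a$th and $b$th marking summands, and comparing with the image in $\Gr^W_{-2}\p$, should give $c_a c_b=0$ for $a\neq b$ and $c_a^2=c_a$. The source of these conditions is the diagonal-class relations: the class of the divisor $x_a=x_b$ pulls back differently under the fiber inclusion. It remains to exclude $c=0$. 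For $g\geq4$, a weight $-3$ or $-4$ relation involving $\Lambda^3_0H$ (Hain's cubic relation computation) rules it out. For $g=3$ that computation leaves one candidate, and I would exclude it by pulling back to the hyperelliptic locus. There the restriction of $\Lambda^3_0H$ vanishes in weight $-1$, and the known non-existence of the extra graded section for the hyperelliptic universal curve \cite{WatanabeHyper2019,WatanabeHyper2025} rules it out. The hard part is this step: getting the weight $-2$ relations right and making the genus-three hyperelliptic comparison fully compatible with the truncation at $W_{-5}$.

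Once the weight $-1$ part equals that of some $\sigma_j$, uniqueness follows. Two graded Lie sections with the same values on generators agree on the whole algebra, provided $\u_{\M,\leq4}^{\geom}$ is generated in weight $-1$. Hain shows this for $g\geq3$, with the weight $-2$ central $\Ql(1)$ piece included via brackets of $H$-classes. So the sections of $\pi_{\leq4}$ are exactly $\sigma_1,\ldots,\sigma_n$, and these are distinct because their weight $-1$ parts differ. For the full sequence \eqref{eq:geometric-graded-extension}, every section truncates to a section of $\pi_{\leq4}$, hence to some $\sigma_j$. Generation in weight $-1$ of $\Gr^W_\bullet\u_{\M}^{\geom}$ then forces the section to equal $\sigma_j$ in all weights.
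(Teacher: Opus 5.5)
Your outline has the same architecture as the paper's proof. You parametrize the weight $-1$ map by $a\in\Ql^n$ using $\Hom_{\Sp(H)}(\Lambda_0^3H,H)=0$, impose Hain's bracket relations, restrict to the hyperelliptic locus for $g=3$, and use generation in weight $-1$ for uniqueness and for the full sequence. The first and last of these steps are fine.

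However, the step you call the main one points to the wrong ingredients, so as written it would not go through. In the paper, all three relations come from applying Hain's functor $\mathfrak d$ to $\sigma$: $a_i^2=a_i$, $a_ia_j=0$ for $i\neq j$, and, for $g\geq4$, $\sum_ia_i=1$. The functor $\mathfrak d$ keeps only weight $-1$ and the $\Lambda_0^2H$-isotypic part of weight $-2$. The trivial isotypic part, where the diagonal classes live, is discarded, and for $g\geq4$ nothing below weight $-2$ is used. In particular, $c=0$ is not excluded for $g\geq4$ by a weight $-3$ or $-4$ ``cubic relation''. The constant term in $\sum_ia_i=1$ comes from the $\Lambda_0^2H$-component of brackets $[\Lambda_0^3H,\Lambda_0^3H]$, on which $\sigma$ has no fiber component. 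This also explains the genus-three exception, which your sketch leaves unexplained. For $g=3$, the $91$-dimensional space $\Lambda^2(\Lambda_0^3H)$ is the sum of a trivial summand and the $90$-dimensional irreducible representation of highest weight $2\lambda_2$, where $\lambda_2$ is the second fundamental weight. It therefore contains no copy of $\Lambda_0^2H$, that constraint disappears, and $a=0$ survives.

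For $g=3$, the compatibility you flag as hard is exactly what the paper supplies. The hyperelliptic row through weight $-4$ is exact with the same kernel $\p_{\leq4}$, so the square is Cartesian and $\sigma$ induces $\sigma_{\mathrm{hyp}}$. This requires checking that on $\u_{\M}^{\geom}$, $\u_{\cX}^{\geom}$ and $\p$ the Hodge-theoretic weight filtration agrees with the arithmetic one. The hyperelliptic input is then the specific weight $-4$ identity $\sum_ia_i^4=1$ for $\sigma_{\mathrm{hyp}}$, whose weight $-1$ map is $(u_1,\ldots,u_n)\mapsto(\sum_ia_iu_i;u_1,\ldots,u_n)$. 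It is not a statement that the universal hyperelliptic curve has no extra sections. It does have them, for instance the hyperelliptic conjugates of the markings. The quartic identity excludes $a=0$, and the other candidates it allows are already excluded by the relations obtained over $\M$.
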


\begin{proof}
\textcolor{black}{Let $\check\theta:\Ql(1)\to\Lambda^2H$
be the dual polarization, also viewed as an element of
$(\Lambda^2H)(-1)$, and put
$\Lambda_0^3H=(\Lambda^3H)(-1)/(\check\theta\wedge H)$.}
\textcolor{black}{Let
$\sigma:\u_{\M,\leq4}^{\geom}\to\u_{\cX,\leq4}^{\geom}$
be an $R$-equivariant graded Lie algebra section of
$\pi_{\leq4}$.} By \cite[\textcolor{black}{Thm.}~9.11 and \textcolor{black}{Prop.}~10.2]{HainRational},
the weight-$-1$ map of $\sigma$ has the form
{\color{black}
\begin{equation}\label{eq:geometric-section-weight-one}
 (v;u_1,\ldots,u_n)\longmapsto
 \left(v;\sum_{i=1}^n a_i u_i,u_1,\ldots,u_n\right),
 \qquad a_i\in\Ql,
\end{equation}
}
\textcolor{black}{where $v\in\Lambda_0^3H$ and $u_i$
belongs to the $i$th marked copy $H_i$ of $H$,
identified with $H$.}
\textcolor{black}{Put
$\Lambda_0^2H=\Lambda^2H/\operatorname{im}\check\theta$.
Hain's functor $\mathfrak d$ forms the two-step graded
quotient retaining weight $-1$ and the
$\Lambda_0^2H$-isotypic component of weight $-2$.
By \cite[\textcolor{black}{Def.}~10.1 and \textcolor{black}{Prop.}~10.2]{HainRational},
its values on $\u_{\M}^{\geom}$ and $\u_{\cX}^{\geom}$ are
$\mathfrak d_{g,n}$ and $\mathfrak d_{g,n+1}$, respectively.
Applying $\mathfrak d$ to $\sigma$ therefore gives a section
$\mathfrak d(\sigma):\mathfrak d_{g,n}\to\mathfrak d_{g,n+1}$
of the natural projection
$\mathfrak d_{g,n+1}\to\mathfrak d_{g,n}$.}
His bracket calculation gives
{\color{black}
\[
 a_i^2=a_i,\qquad
 a_i a_j=0\quad(i\ne j),\qquad
 \sum_i a_i=1\quad(g\geq4);
\]
}
see \cite[\textcolor{black}{Sec.}~10.1, \textcolor{black}{Prop.}~10.4,
and \textcolor{black}{Lem.}~10.5]{HainRational}.
Thus the weight-$-1$ map is tautological when $g\geq4$.
For $g=3$, the only additional candidate has $a_i=0$
for every $i$.

\textcolor{black}{Let $\cH_{3,n/\bar k}$ be the substack
of $\M_{\bar k}$ parametrizing hyperelliptic curves of
genus three with $n$ ordered markings.
To exclude this candidate, restrict along}
\[
 \iota:\cH_{3,n/\bar k}\longrightarrow \M_{\bar k}
\]
and pull back the universal complete curve.
Denote the Lie algebras of the geometric relative
unipotent radicals of the hyperelliptic base and total
space by $\mathfrak v_{\M}^{\geom}$ and
$\mathfrak v_{\cX}^{\geom}$, respectively.
\textcolor{black}{We use the Hodge-theoretic weight
filtrations, transported to $\Ql$ by comparison.
On $\u_{\M}^{\geom}$, $\u_{\cX}^{\geom}$, and $\p$,
these agree with the weight filtrations induced by
the action of the arithmetic weighted completions: both satisfy $W_{-r}=L^r$ for
$r\geq1$, since their abelianizations are pure of
weight $-1$. This follows from the argument of
\cite[Rem.~8.5]{HainRational} and the geometric
exact sequence.}
Put
\[
 \mathfrak v_{T,\leq4}^{\geom}
 =\Gr^W_\bullet
   (\mathfrak v_T^{\geom}/W_{-5}\mathfrak v_T^{\geom})
 \qquad(T=\M,\cX).
\]
\textcolor{black}{The comparison maps preserve these
filtrations and induce} the diagram of
$\Sp(H)$-equivariant graded Lie algebras with exact rows
\cite[\textcolor{black}{Prop.}~8.1 and
\textcolor{black}{Cor.}~8.2]{LuoWatanabeBirman}:
\[
\begin{tikzcd}[column sep=small]
0 \arrow[r] &
\p_{\leq4} \arrow[r] \arrow[d,equal] &
\mathfrak v_{\cX,\leq4}^{\geom} \arrow[r] \arrow[d] &
\mathfrak v_{\M,\leq4}^{\geom}
  \arrow[l,bend right=40,
    "\textcolor{black}{\sigma_{\mathrm{hyp}}}"']
  \arrow[r] \arrow[d] & 0 \\
0 \arrow[r] &
\p_{\leq4} \arrow[r] &
\u_{\cX,\leq4}^{\geom} \arrow[r,"\pi_{\leq4}"] &
\u_{\M,\leq4}^{\geom}
  \arrow[l,bend left=40,"\textcolor{black}{\sigma}"]
  \arrow[r] & 0.
\end{tikzcd}
\]
The right square is Cartesian because the map on kernels
is the identity.
\textcolor{black}{Hence $\sigma$ induces an
$\Sp(H)$-equivariant graded section
$\sigma_{\mathrm{hyp}}$ of the upper projection.}

{\color{black}
The right vertical map is induced by $\iota$.
On weight $-1$, it is
\[
 \begin{aligned}
 \Gr^W_{-1}\mathfrak v_{\M}^{\geom}
   \cong\bigoplus_{i=1}^n H_i
 &\longrightarrow
 \Gr^W_{-1}\u_{\M}^{\geom}
   \cong\Lambda_0^3H\oplus\bigoplus_{i=1}^n H_i,\\
 (u_1,\ldots,u_n)
 &\longmapsto(0;u_1,\ldots,u_n).
 \end{aligned}
\]
Here the zero belongs to $\Lambda_0^3H$:
the unmarked hyperelliptic abelianization has weight $-2$,
and the comparison respects the marked summands
\cite[proof of \textcolor{black}{Prop.}~8.3]{WatanabeHyper2025}.
\textcolor{black}{By \eqref{eq:geometric-section-weight-one},
the induced section $\sigma_{\mathrm{hyp}}$ has
weight-$-1$ map}
\[
 (u_1,\ldots,u_n)\longmapsto
 \left(\sum_{i=1}^n a_i u_i;u_1,\ldots,u_n\right),
\]
where the first component now belongs to the fiber
$H_0=\Gr^W_{-1}\p$.

\textcolor{black}{Applied to $\sigma_{\mathrm{hyp}}$,}
the four-weight hyperelliptic calculation gives
\[
 \sum_i a_i^4=1;
\]
see \cite[proof of \textcolor{black}{Thm.}~8.4]{WatanabeHyper2019}
and its geometric formulation in
\cite[\textcolor{black}{Thm.}~9.4]{WatanabeHyper2025}.
Comparison and scalar extension from $\Q$ to $\Ql$
preserve this calculation
\cite[\textcolor{black}{Thm.}~6.3]{HainRational}
\cite[\textcolor{black}{Rem.}~5.5]{LuoWatanabeBirman}.
\textcolor{black}{If the weight-$-1$ map of $\sigma$
were the additional genus-three candidate, then every
$a_i$ would vanish.}
This contradicts the displayed relation.
}

Finally, $\Gr^W_\bullet\u_{\M}^{\geom}$ and its four-weight
quotient are generated in weight $-1$
\cite[\textcolor{black}{Thm.}~9.11 and \textcolor{black}{Rem.}~8.5]{HainRational}.
\textcolor{black}{Thus each section is uniquely determined
by its weight-$-1$ map. The $n$ remaining possibilities
in \eqref{eq:geometric-section-weight-one} are precisely
those induced by the tautological sections.}
\end{proof}

\subsection{Cohomology and cup products}
\label{subsec:moduli-cohomology-obstruction}

Put
\[
 V_r=\Gr_L^r\p=\Gr^W_{-r}\p,
 \qquad
 T_r=V_r^{\Sp(H)}.
\]
The module $T_{2t}$ is a direct sum of copies of $\Ql(t)$,
while the odd-weight invariant parts vanish.
Together with \cite[\textcolor{black}{Cor.}~9.3]{HainRational}, this gives
\begin{equation}\label{eq:low-weight-tate-vanishing}
 T_r=0\qquad
 \text{if $r$ is odd or $1\leq r<6$.}
\end{equation}
Let $\kappa_j\in H^1(\Gamma,H)$ be the
$\ell$-adic Abel--Jacobi class of the relative degree-zero
divisor class $(2g-2)s_j-\omega_{\cX/\M}$, where
$\omega_{\cX/\M}$ is the relative canonical class.
We use continuous cohomology for $\Gamma$ and $G_k$.

\textcolor{black}{The following lemma is the analogue over $\M$
of Hain's cohomology computation and cup-product injection
for the generic curve
\cite[\textcolor{black}{Prop.}~16.3 and \textcolor{black}{Cor.}~16.4]{HainRational}.}


\begin{lemma}\label{lem:moduli-cohomology}
\leavevmode
\begin{enumerate}[label=\textup{(\roman*)}]
\item
For every $s\geq1$, $H^0(\Gamma,V_s)=0$.
There are natural isomorphisms
\begin{equation}\label{eq:moduli-first-cohomology}
 \begin{aligned}
 H^1(\Gamma,H)&=\bigoplus_{j=1}^n\Ql\kappa_j,\\
 H^1(G_k,T_s)
 &\xrightarrow[\mathrm{inf}]{\sim}
 H^1(\Gamma,T_s)
 \xrightarrow{\sim}H^1(\Gamma,V_s)
 \qquad(s\geq2),
 \end{aligned}
\end{equation}
where the last map is induced by $T_s\hookrightarrow V_s$.
In particular, $H^1(\Gamma,V_s)=0$ when $s>1$ is odd.

\item
For $r\geq2$, cup product followed by the bracket
$[-,-]:H\otimes V_{2r}\to V_{2r+1}$ gives an injective
composite
\begin{equation}\label{eq:moduli-cup-injection}
 \begin{aligned}
 H^1(\Gamma,H)\otimes_{\Ql}H^1(\Gamma,V_{2r})
 &\xrightarrow{\;\smile\;}
 H^2(\Gamma,H\otimes V_{2r})\\
 &\xrightarrow{\;[-,-]_*\;}
 H^2(\Gamma,V_{2r+1}).
 \end{aligned}
\end{equation}
\end{enumerate}
\end{lemma}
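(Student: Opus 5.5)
The proof will reduce everything to cohomology of the weighted completion $\u_{\M}$ through the comparison \eqref{eq:weighted-cohomology-comparison}, and then adapt Hain's computation for the generic curve \cite[Prop.~16.3, Cor.~16.4]{HainRational} from $G_K$ to $\Gamma$.

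\textbf{Part (i).} For $H^0$: $V_s$ is pure of weight $-s<0$, and $\rho_{\M}(\Gamma)$ is Zariski dense in $R$, so $H^0(\Gamma,V_s)=V_s^R$. The central cocharacter $\omega$ acts on $V_s$ by the nontrivial character $t\mapsto t^{s}$, so $V_s^R=0$.

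For $H^1$: apply \eqref{eq:weighted-cohomology-comparison} to get $H^1(\Gamma,V)\cong\Hom_R(H_1(\u_{\M}),V)$. Only the weight-$(-s)$ part of $H_1(\u_{\M})$ contributes, because weights are strict.
\begin{itemize}
\item[] By Hain's structure results \cite[Secs.~8--9]{HainRational}, $H_1(\u_{\M})$ is built from two pieces. The geometric part is $\Lambda_0^3H\oplus\bigoplus_{i=1}^n H_i$ in weight $-1$. Since $W_{-1}\g^{\geom}_{\M}=\u^{\geom}_{\M}$ for $g\geq3$, it contributes nothing else. The arithmetic part is $H_1(\u_{\cA_k})$, a sum of copies of $\Ql(t)$ with $t\geq1$.
\item[] For $V=H$, only weight $-1$ matters. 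Since $\Hom_R(\Lambda^3_0H,H)=0$ for $g\geq3$, we get $\Hom_R(H_1(\u_{\M}),H)\cong\Ql^n$.
\item[] To identify the basis, pull back along the moving-mark maps $\phi_j$. Each $\kappa_j$ restricts nontrivially on the $j$th factor $H_j$ and trivially on the others, which gives $H^1(\Gamma,H)=\bigoplus_j\Ql\kappa_j$.
\item[] For $s\geq2$, the geometric generators have weight $-1\neq-s$, so only the Tate generators from $\cA_k$ contribute. Any $R$-map from $\Ql(t)$ lands in $\Sp(H)$-invariants, which gives $H^1(\Gamma,V_s)\cong H^1(\Gamma,T_s)$.
\item[] The same comparison holds for $G_k$ and $\cA_k$. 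Since $\cG_{\M}\to\cA_k$ is surjective and induces the identity on these Tate weight pieces, inflation is an isomorphism.
\item[] For odd $s$ we have $T_s=0$, so $H^1(\Gamma,V_s)=0$.
\end{itemize}

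\textbf{Part (ii).} Use the injection on $H^2$ in \eqref{eq:weighted-cohomology-comparison}. This reduces the claim to showing that the composite of the cup product and the bracket is injective on $H^1(\u_{\M},H)^R\otimes H^1(\u_{\M},V_{2r})^R$, with values in $H^2(\u_{\M},V_{2r+1})^R$. Then:
\begin{itemize}
\item[] Replace $V_{2r}$ by $T_{2r}\cong\Ql(r)^{m}$. The bracket $H\otimes T_{2r}\to V_{2r+1}$ is injective because $Z(\Gr_L\p)=0$ (Lemma~\ref{lem:lie-center}). So it suffices to prove injectivity of the cup product $H^1(\Gamma,H)\otimes H^1(G_k,\Ql(r))\to H^2(\Gamma,H(r))$, landing in $H^2(\Gamma,V_{2r+1})$.
\item[] Injectivity is detected on $H^2(\u_{\M})$ through the relations between the generators $H_j$ and the Tate generators of $\u_{\cA_k}$. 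The geometric image is a graded ideal (end of Subsection~\ref{subsec:weighted-curve-sequences}), and $\u_{\M}$ is an extension of $\u_{\cA_k}$ by the image of $\u^{\geom}_{\M}$.
\item[] Consequently, brackets of a weight-$(-1)$ generator $u_j\in H_j$ with a Tate generator $z$ of weight $-2r$ are free: $[H_j,z]$ is not killed by any quadratic relation. Dually, $\kappa_j\smile\zeta$ is nonzero in $H^2$, and these classes are independent over $j$ and over a basis of $H^1(G_k,\Ql(r))$.
\item[] To check this concretely, restrict along $\phi_j$ combined with a rational point of $\M$ over $k$ (or over a finite extension). This reduces to the cup product $H^1(\Gamma_j\rtimes G_k,H)\otimes H^1(G_k,\Ql(r))$. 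On this product-type group, Künneth gives injectivity, because $\kappa_j$ pulls back to the class of $H^1(U_j)$ tensored with $1\in H^0(G_k)$.
\end{itemize}

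\textbf{Main obstacle.} The hardest step is part (ii): proving that the restriction to the moving-mark family with a $k$-point does not lose the Künneth injectivity, and that distinct $j$ give independent images. I would handle this as Hain does in Cor.~16.4, using the splitting $\Gamma\to G_k$ given by a section point together with the product structure from Lemma~\ref{lem:moving-extension}.
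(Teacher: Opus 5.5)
Your part (i) is sound, though it takes a different route from the paper. You read everything off $H^1(\Gamma,V)\cong\Hom_R(H_1(\u_{\M}),V)$. In weight $-1$, $H_1(\u_{\M})$ is $\Lambda^3_0H\oplus\bigoplus_iH_i$; in weights $\le-2$ it agrees with $H_1(\u_{\cA_k})$, because the geometric image is generated in weight $-1$. You get the basis $\kappa_j$ by restricting along the moving-mark maps. The paper instead runs Hochschild--Serre for $1\to\Gamma^{\geom}\to\Gamma\to G_k\to1$, kills $H^1(\Gamma^{\geom},V_s^{\mathrm{nt}})^{G_k}$ by weights, and cites Hain for the basis.

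Part (ii), however, has a genuine gap at its key step, because your ``dually'' is backwards. The cup product $H^1(\u_{\M})\otimes H^1(\u_{\M})\to H^2(\u_{\M})$ is dual to taking quadratic parts of a minimal set of relations. So if the brackets $[u_j,z]$ were free, the products $c_j\smile\zeta$ would \emph{vanish}, as they do for a free Lie algebra. The truth is the reverse. Since the geometric image is an ideal generated in weight $-1$, $[z,u_j]$ equals an iterated bracket of length $2r+1$ of weight-$(-1)$ geometric generators. It is these relations, with quadratic part $[z,u_j]$, that make the products nonzero. Their independence in $H_2(\u_{\M})$ is exactly what has to be shown, and you do not show it.

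The fallback through a $k$-point and $\phi_j$ does not close the gap as written:
\begin{itemize}
\item $\pi_1(U_{j,\bar k})\rtimes G_k$ is not a product, so K\"unneth does not apply.
\item The pullback of $\kappa_j$ also carries a Kummer component in $H^1(G_k,H)$, which your description omits.
\item Independence over $j$ is left open.
\end{itemize}
It could be repaired by filtering $H^2$ with Hochschild--Serre and computing the $E^{1,1}$-component, but that argument is not in the proposal.

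The missing idea is a short spectral-sequence argument that needs no rational points. Take the Hochschild--Serre spectral sequence for $1\to\Gamma^{\geom}\to\Gamma\to G_k\to1$ with coefficients $H\otimes T_{2r}$. Its bottom row vanishes because $(H\otimes T_{2r})^{\Gamma^{\geom}}=0$. Hence $E_2^{1,1}=H^1\bigl(G_k,H^1(\Gamma^{\geom},H\otimes T_{2r})\bigr)$ injects into $H^2(\Gamma,H\otimes T_{2r})$.

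Next, $H^1(\Gamma^{\geom},H)\cong\Ql^n$ is a trivial $G_k$-module, because the coordinate projections of $H_1(\u^{\geom}_{\M})$ onto the marked summands $H_j\cong H$ are $R$-equivariant. Since $T_{2r}$ is also geometrically trivial, $E_2^{1,1}\cong H^1(\Gamma^{\geom},H)\otimes H^1(G_k,T_{2r})$. Compatibility of Hochschild--Serre with cup products then identifies the cup product with this embedding, after restriction on the first factor and inflation on the second.

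Finally, combine this with the $R$-split injectivity of the bracket $H\otimes T_{2r}\to V_{2r+1}$. Absolute irreducibility of $H$ forces the kernel to have the form $H\otimes B$. Elements of $B$ commute with the generating space $H$, so $B=0$ by centerlessness. This gives (ii).
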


\begin{proof}
Let $s\geq2$. Write
\[
 V_s=T_s\oplus V_s^{\mathrm{nt}},
\]
where $V_s^{\mathrm{nt}}$ is the sum of the nontrivial
$\Sp(H)$-isotypic components of $V_s$.
Both summands are $R$-stable, and
\[
 (V_s^{\mathrm{nt}})^{\Sp(H)}=0.
\]
For every finite-dimensional $R$-module $V$, relative
completion gives an arithmetic-equivariant identification
\[
 H^1(\textcolor{black}{\Gamma^{\geom}},V)
 \cong
 \Hom_{\Sp(H)}\bigl(H_1(\u_{\M}^{\geom}),V\bigr).
\]
The module $H_1(\u_{\M}^{\geom})$ is pure of weight $-1$
\cite[\textcolor{black}{Prop.}~8.4 and \textcolor{black}{Thm.}~9.11]{HainRational}.
Thus $H^1(\textcolor{black}{\Gamma^{\geom}},V_s^{\mathrm{nt}})$ has weight
$1-s<0$.
Its arithmetic action factors through the cyclotomic
character, whose image is infinite. Consequently,
\[
 H^1(\textcolor{black}{\Gamma^{\geom}},V_s^{\mathrm{nt}})^{G_k}=0.
\]

Apply the Hochschild--Serre spectral sequence
\[
 E_2^{p,q}
 =H^p\bigl(G_k,
   H^q(\textcolor{black}{\Gamma^{\geom}},V_s^{\mathrm{nt}})\bigr)
 \Longrightarrow H^{p+q}(\Gamma,V_s^{\mathrm{nt}}).
\]
Its five-term exact sequence begins
\[
 \begin{aligned}
 0\longrightarrow
 H^1\bigl(G_k,(V_s^{\mathrm{nt}})^{\textcolor{black}{\Gamma^{\geom}}}\bigr)
 &\longrightarrow H^1(\Gamma,V_s^{\mathrm{nt}})\\
 &\xrightarrow{\mathrm{res}}
 H^1(\textcolor{black}{\Gamma^{\geom}},V_s^{\mathrm{nt}})^{G_k}
 \longrightarrow
 H^2\bigl(G_k,(V_s^{\mathrm{nt}})^{\textcolor{black}{\Gamma^{\geom}}}\bigr).
 \end{aligned}
\]
Since $(V_s^{\mathrm{nt}})^{\textcolor{black}{\Gamma^{\geom}}}=0$,
the two outer cohomology groups vanish.
Restriction is therefore an isomorphism, and the preceding
weight calculation gives
\[
 H^1(\Gamma,V_s^{\mathrm{nt}})
 \xrightarrow[\mathrm{res}]{\sim}
 H^1(\textcolor{black}{\Gamma^{\geom}},V_s^{\mathrm{nt}})^{G_k}=0.
\]
It follows that inclusion induces an isomorphism
\[
 H^1(\Gamma,T_s)\xrightarrow{\sim}H^1(\Gamma,V_s).
\]

The module $T_s$ is geometrically trivial.
Moreover, $H_1(\u_{\M}^{\geom})$ has no trivial
$\Sp(H)$-summand, so relative completion gives
\[
 H^1(\textcolor{black}{\Gamma^{\geom}},T_s)
 \cong
 \Hom_{\Sp(H)}\bigl(H_1(\u_{\M}^{\geom}),T_s\bigr)=0.
\]
Consider the Hochschild--Serre spectral sequence
\[
 E_2^{p,q}
 =H^p\bigl(G_k,H^q(\textcolor{black}{\Gamma^{\geom}},T_s)\bigr)
 \Longrightarrow H^{p+q}(\Gamma,T_s).
\]
Since $T_s^{\textcolor{black}{\Gamma^{\geom}}}=T_s$, its five-term exact
sequence begins
\[
 \begin{aligned}
 0\longrightarrow H^1(G_k,T_s)
 &\xrightarrow{\mathrm{inf}}H^1(\Gamma,T_s)\\
 &\xrightarrow{\mathrm{res}}
 H^1(\textcolor{black}{\Gamma^{\geom}},T_s)^{G_k}
 \longrightarrow H^2(G_k,T_s).
 \end{aligned}
\]
The target of restriction is zero by the preceding
calculation. Exactness therefore shows that inflation
is an isomorphism:
\[
 H^1(G_k,T_s)
 \xrightarrow[\mathrm{inf}]{\sim}H^1(\Gamma,T_s).
\]
This proves the second line of
\eqref{eq:moduli-first-cohomology}.
The odd-degree vanishing follows from $T_s=0$ for odd $s$.

For $H$, the Zariski density of geometric monodromy in
$\Sp(H)$ gives $H^{\textcolor{black}{\Gamma^{\geom}}}=0$.
The Hochschild--Serre five-term sequence therefore identifies
\[
 H^1(\Gamma,H)
 \xrightarrow{\sim}H^1(\textcolor{black}{\Gamma^{\geom}},H)^{G_k}.
\]
On the other hand,
\[
 H^1(\textcolor{black}{\Gamma^{\geom}},H)
 \cong
 \Hom_{\Sp(H)}
 \left(\Lambda_0^3H\oplus\bigoplus_{j=1}^nH_j,H\right)
 \cong\Ql^n.
\]
The coordinate projections onto these copies are
$R$-equivariant, so the induced $G_k$-action on this
cohomology group is trivial. Thus
\[
 H^1(\Gamma,H)
 \cong H^1(\textcolor{black}{\Gamma^{\geom}},H)\cong\Ql^n.
\]
The classes $\kappa_j$ form a basis by
\cite[\textcolor{black}{Prop.}~12.1]{HainRational}.
Finally, Zariski density of $\Gamma\to R(\Ql)$ and the
negative weight of $V_s$ give $H^0(\Gamma,V_s)=0$
for every $s\geq1$.

We now prove (ii). Consider the restriction of the bracket
\[
 b_{2r}:H\otimes T_{2r}\longrightarrow V_{2r+1},
 \qquad h\otimes t\longmapsto[h,t].
\]
Since $H$ is absolutely irreducible as an $\Sp(H)$-module,
the kernel has the form $H\otimes B$ for a subspace
$B\subseteq T_{2r}$.
Every element of $B$ commutes with $H$, which generates
$\Gr_L^\bullet\p$.
Centerlessness, proved in Lemma~\ref{lem:lie-center},
therefore gives $B=0$.
Thus $b_{2r}$ is injective.
It splits as an $R$-module map, so
\[
 b_{2r,*}:H^2(\Gamma,H\otimes T_{2r})
 \lhook\joinrel\longrightarrow H^2(\Gamma,V_{2r+1})
\]
is also injective.

By (i), inclusion identifies $H^1(\Gamma,T_{2r})$
with $H^1(\Gamma,V_{2r})$.
Naturality of cup products therefore reduces
\eqref{eq:moduli-cup-injection} to the injectivity of
the ordinary cup product
\begin{equation}\label{eq:moduli-tate-cup}
 H^1(\Gamma,H)\otimes H^1(\Gamma,T_{2r})
 \xrightarrow{\;\smile\;}
 H^2(\Gamma,H\otimes T_{2r}).
\end{equation}

Consider the Hochschild--Serre spectral sequence
\[
 E_2^{p,q}
 =H^p\bigl(G_k,
   H^q(\textcolor{black}{\Gamma^{\geom}},H\otimes T_{2r})\bigr)
 \Longrightarrow H^{p+q}(\Gamma,H\otimes T_{2r}).
\]
Since $(H\otimes T_{2r})^{\textcolor{black}{\Gamma^{\geom}}}=0$,
its low-degree exact sequence gives an injection
\[
 H^1\bigl(G_k,
   H^1(\textcolor{black}{\Gamma^{\geom}},H\otimes T_{2r})\bigr)
 \lhook\joinrel\longrightarrow
 H^2(\Gamma,H\otimes T_{2r}).
\]
Using restriction for the first factor and inflation
for the second, the cup product in
\eqref{eq:moduli-tate-cup} is the composite
\[
 \begin{aligned}
 H^1(\Gamma,H)\otimes H^1(\Gamma,T_{2r})
 &\cong
 H^1(\textcolor{black}{\Gamma^{\geom}},H)\otimes H^1(G_k,T_{2r})\\
 &\cong
 H^1\bigl(G_k,
   H^1(\textcolor{black}{\Gamma^{\geom}},H)\otimes T_{2r}\bigr)\\
 &\cong
 H^1\bigl(G_k,
   H^1(\textcolor{black}{\Gamma^{\geom}},H\otimes T_{2r})\bigr)\\
 &\lhook\joinrel\longrightarrow
 H^2(\Gamma,H\otimes T_{2r}).
 \end{aligned}
\]
Here the second isomorphism uses the finite-dimensional
trivial $G_k$-module $H^1(\textcolor{black}{\Gamma^{\geom}},H)$,
and the third uses geometric triviality of $T_{2r}$.
Compatibility of Hochschild--Serre with cup products
identifies this composite with
\eqref{eq:moduli-tate-cup}.
Thus that cup product is injective.
Composing with $b_{2r,*}$ proves
\eqref{eq:moduli-cup-injection}.
\end{proof}

\subsection{Unipotent section classes}
\label{subsec:unipotent-section-classes}
We now combine the geometric classification with the
cohomology calculations to determine the unipotent section
classes. We first treat finite quotients of the fiber and
then pass to the full kernel.

For $r\geq1$, put
\begin{equation}\label{eq:arithmetic-graded-sections}
 \mathcal S_r=H^1_{\nab}(\Gamma,\cP_r)(\Ql),
 \qquad
 \cP_r=\cP/W_{-r-1}\cP.
\end{equation}
By Proposition~\ref{prop:completion-graded-section-comparison},
$\mathcal S_r$ is naturally identified with the set of
$R$-equivariant graded sections of
\[
 \Gr^W_\bullet(\g_{\cX}/W_{-r-1}\p)
 \longrightarrow\Gr^W_\bullet\g_{\M}.
\]
Write $[s_{j,r}]$ for the tautological classes and
$\rho_r:\mathcal S_{r+1}\to\mathcal S_r$ for truncation.

Hain's obstruction construction
\cite[\textcolor{black}{Thm.}~14.3 and \textcolor{black}{Sec.}~14.2]{HainRational},
applied to these section sets, gives
\[
 H^1(\g_{\M},V_{r+1})\curvearrowright\mathcal S_{r+1}
 \xrightarrow{\rho_r}\mathcal S_r
 \xrightarrow{\delta_r^{\mathrm{Lie}}}
 H^2(\g_{\M},V_{r+1}).
\]
Here a class lifts precisely when its obstruction vanishes,
and each nonempty fiber of $\rho_r$ is a torsor under
$H^1(\g_{\M},V_{r+1})$.

By \cite[\textcolor{black}{Lem.}~3.10]{HainNab} and
\eqref{eq:weighted-cohomology-comparison}, we have
\[
 H^1(\g_{\M},V_{r+1})
 \cong H^1(\u_{\M},V_{r+1})^R
 \xrightarrow{\sim}H^1(\Gamma,V_{r+1}).
\]
Transport the torsor action through these isomorphisms.

In degree two, we use the injections
\[
 H^2(\g_{\M},V_{r+1})
 \hookleftarrow
 H^2(\u_{\M},V_{r+1})^R
 \lhook\joinrel\longrightarrow
 H^2(\Gamma,V_{r+1}).
\]
The right arrow is the comparison in
\eqref{eq:weighted-cohomology-comparison}.
Under the weight-splitting identifications, the left arrow
is induced by extending $R$-invariant cochains by zero
whenever an argument lies in $\Lie R$; restriction is
a left inverse.
In Hain's obstruction construction, the $R$-equivariant
graded lift is the identity on $\Lie R$, so its bracket
defect vanishes whenever an argument lies in $\Lie R$
\cite[proof of \textcolor{black}{Prop.}~4.8]{HainNab}.
Each class $\delta_r^{\mathrm{Lie}}(s)$ therefore has a
unique preimage in $H^2(\u_{\M},V_{r+1})^R$.
Define $\delta_r(s)$ to be the image of this preimage in
$H^2(\Gamma,V_{r+1})$.
Since both arrows are injective,
$\delta_r(s)=0$ if and only if
$\delta_r^{\mathrm{Lie}}(s)=0$.
We therefore obtain
\begin{equation}\label{eq:moduli-obstruction-sequence}
 H^1(\Gamma,V_{r+1})\curvearrowright\mathcal S_{r+1}
 \xrightarrow{\rho_r}\mathcal S_r
 \xrightarrow{\delta_r}H^2(\Gamma,V_{r+1}),
\end{equation}
with
\[
 \operatorname{im}\rho_r=\delta_r^{-1}(0),
\]
and each nonempty fiber of $\rho_r$ a torsor under
\[
 H^1(\Gamma,V_{r+1})
 \cong H^1(G_k,T_{r+1})
\]
by Lemma~\ref{lem:moduli-cohomology}.
Only these assertions about section sets are needed. The geometric classification and the vanishing of
$H^1(\Gamma,V_r)$ for $2\leq r\leq5$ give the initial
cases for the lifting argument.

\begin{lemma}\label{lem:initial-unipotent-sections}
{\color{black}
For $r=4,5$,
\[
 \mathcal S_r=\{[s_{1,r}],\ldots,[s_{n,r}]\},
\]
and these classes are distinct.
}
\end{lemma}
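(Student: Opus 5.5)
The plan is to identify $\mathcal S_r$, via Proposition~\ref{prop:completion-graded-section-comparison}, with the set of $R$-equivariant graded Lie algebra sections of $\Gr^W_\bullet(\g_{\cX}/W_{-r-1}\p)\to\Gr^W_\bullet\g_{\M}$. We then reduce these arithmetic graded sections to the geometric ones classified in Proposition~\ref{prop:geometric-graded-sections}. The argument has three steps: establish the case $r=4$, deduce $r=5$ by the obstruction sequence, and prove distinctness.

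\emph{Step 1 ($r=4$).} Let $\sigma$ be an $R$-equivariant graded section for $r=4$. Since the weight-zero part is $\Lie R$ on both sides and $\sigma$ is $R$-equivariant, $\sigma$ is the identity there. It therefore restricts to a graded section on the negative-weight parts, which map to $\Gr^W_\bullet\u_{\M}$ and $\Gr^W_\bullet\u_{\cX}/W_{-5}\p$. The image of $\Gr^W_\bullet\u^{\geom}_{\M}$ in $\Gr^W_\bullet\u_{\M}$ is a graded ideal (end of Subsection~\ref{subsec:weighted-curve-sequences}), and the geometric sequence is a fiber product of the arithmetic one. Pulling $\sigma$ back along $\u^{\geom}_{\M}\to\u_{\M}$ gives an $R$-equivariant graded section of $\pi_{\leq4}$; the truncation to weights $\geq -4$ is harmless, since $\p/W_{-5}\p$ only sees these weights. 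By Proposition~\ref{prop:geometric-graded-sections}, this pullback is one of the $\sigma_j$.

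It remains to recover $\sigma$ from its geometric restriction. In weights $-1,\dots,-4$, the cokernel of $\Gr^W_\bullet\u^{\geom}_{\M}\to\Gr^W_\bullet\u_{\M}$ is the graded Lie algebra of $\cA_k$, by the exact sequence $\cG^{\geom}_{\M}\to\cG_{\M}\to\cA_k\to1$. The Lie algebra of the prounipotent radical of $\cA_k$ has weights $\leq -2$ and is pure Tate, i.e.\ generated by copies of $\Ql(m)$.

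Two arithmetic sections that agree on the geometric part differ by an equivariant map from these Tate pieces $\Ql(m)$, $m\geq1$, into $\Gr^W_{-2m}\p$. Such a map lands in $T_{2m}$. Since $T_2=T_4=0$ by \eqref{eq:low-weight-tate-vanishing}, and weights $\geq -4$ only involve $m\leq 2$, the difference vanishes. Equivalently, the fibers of $\rho_r$ are torsors under $H^1(\Gamma,V_{r+1})\cong H^1(G_k,T_{r+1})=0$ for $r+1\leq5$. So I would run the argument as an induction from $r=1$ through the obstruction sequence \eqref{eq:moduli-obstruction-sequence}, with all torsor groups zero. This yields $|\mathcal S_4|\le|\mathcal S_1|$, with the classes in $\mathcal S_4$ determined by their images.

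The induction needs an input at the bottom, and this is the main obstacle. The weight-$-1$ part of $\mathcal S_1$ is not determined by the graded-section description at level $1$ alone. The bracket relations that force $a_i\in\{0,1\}$ live in weights $-2$ to $-4$. I would therefore argue directly at $r=4$ using the geometric pullback above: the pullback forces the weight-$-1$ map, hence the geometric part, to be $\sigma_j$. The $\cA_k$ directions are then forced by $T_2=T_4=0$. Concretely, $\sigma$ on a lift of an arithmetic generator of weight $-2m$ is determined modulo $\Gr^W_{-2m}\p$, and the ambiguity is an $R$-map $\Ql(m)\to V_{2m}$, which is zero. Care is needed to check that equivariance plus bracket compatibility pins down $\sigma$ on the whole weight-$-2m$ piece.

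\emph{Step 2 ($r=5$).} Apply \eqref{eq:moduli-obstruction-sequence} with $r=4$. Each tautological class $[s_{j,4}]$ lifts, since $[s_{j,5}]$ exists. The fiber is a torsor under $H^1(\Gamma,V_5)=0$, which vanishes because $5$ is odd (Lemma~\ref{lem:moduli-cohomology}(i)). Hence $\rho_4$ is a bijection, and $\mathcal S_5$ consists of the $[s_{j,5}]$.

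\emph{Step 3 (distinctness).} The $[s_{j,r}]$ have distinct weight-$-1$ maps in \eqref{eq:geometric-section-weight-one}, namely $a_i=\delta_{ij}$. Since $[\sigma]\mapsto\Gr^W_\bullet d\sigma$ is a bijection, the classes are distinct for $r=4$. They are then distinct for $r=5$ as well, because $\rho_4$ maps them to distinct classes.
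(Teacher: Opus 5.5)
Your proof is correct, but Step 1 takes a different route from the paper. The paper uses the geometric classification only to fix the index $j$. It then shows $\xi_1=[s_{j,1}]$: their difference is a class in $H^1(\Gamma,H)$ with trivial geometric restriction, hence zero by the restriction isomorphism $H^1(\Gamma,H)\cong H^1(\Gamma^{\geom},H)$ of Lemma~\ref{lem:moduli-cohomology}. Finally, the vanishing of $H^1(\Gamma,V_r)\cong H^1(G_k,T_r)$ for $2\leq r\leq5$ makes lifts unique, giving $\xi=[s_{j,4}]$ and the description of $\mathcal S_5$ in one stroke. That level-one comparison is exactly the ``input at the bottom'' you flagged as the obstacle.

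You instead compare $\sigma$ directly with the arithmetic tautological section $\sigma_{j,4}$ at level four. They agree on the image of $\Gr^W_\bullet\u_{\M}^{\geom}$. Their difference is therefore an $R$-equivariant linear map from the cokernel, which is the associated graded of the Lie algebra of the unipotent radical of $\cA_k$ (even weights, $\Sp(H)$-trivial), into $\Gr^W_\bullet(\p/W_{-5}\p)$. Such a map lands in $T_2\oplus T_4=0$. This argument is complete as it stands. It is purely linear, so no bracket compatibility needs checking, and your closing caveat is unnecessary. Since the cokernel vanishes in weight $-1$, it also supplies the bottom input you were worried about.

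The paper's route reuses the cohomological framework (torsors and $H^1(\Gamma,V_r)\cong H^1(G_k,T_r)$) that drives Proposition~\ref{prop:finite-unipotent-section-classes}, and treats all $r\leq5$ uniformly. Yours is more hands-on: it needs only the structure of $\cA_k$ and $T_2=T_4=0$, not the Hochschild--Serre computations, and it shows directly why no arithmetic ambiguity appears before weight $-6$. Your Steps 2 and 3 match the paper.
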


\begin{proof}
Let $\xi\in\mathcal S_4$, and write $\xi_r$ for its
truncation to $\mathcal S_r$.
The Cartesian square in
\eqref{eq:geometric-arithmetic-comparison} identifies
the geometric extension with the pullback of the arithmetic
extension along $\g_{\M}^{\geom}\to\g_{\M}$.
This remains true after quotienting the common kernel by
$W_{-5}\p$ and taking associated gradeds, by strictness.
Thus the graded section corresponding to $\xi$ pulls back
to a section over $\Gr^W_\bullet\g_{\M}^{\geom}$.
Restricting to negative weights and truncating through
weight $-4$ gives a geometric section of $\pi_{\leq4}$.
By Proposition~\ref{prop:geometric-graded-sections},
this section is induced by $\sigma_j$ for a unique $j$.

Since $\cP_1\cong H$ is abelian, the difference between
$\xi_1$ and $[s_{j,1}]$ defines a class in $H^1(\Gamma,H)$
whose geometric restriction vanishes, as their geometric
weight-$-1$ maps agree.
Since restriction gives
$H^1(\Gamma,H)\cong H^1(\textcolor{black}{\Gamma^{\geom}},H)$
by Lemma~\ref{lem:moduli-cohomology},
we conclude that $\xi_1=[s_{j,1}]$.
For $2\leq r\leq5$, each nonempty fiber of
$\mathcal S_r\to\mathcal S_{r-1}$ is a torsor under
\[
 H^1(\Gamma,V_r)\cong H^1(G_k,T_r)=0,
\]
by Lemma~\ref{lem:moduli-cohomology} and
\eqref{eq:low-weight-tate-vanishing}.
Thus lifts are unique whenever they exist.
The tautological sections supply these lifts, giving
$\xi=[s_{j,4}]$ and the unique lift $[s_{j,5}]$.
Consequently,
\[
 \mathcal S_r=\{[s_{1,r}],\ldots,[s_{n,r}]\}
 \qquad(r=4,5).
\]
Their distinct weight-$-1$ maps distinguish these classes.
\end{proof}
To describe the possible lifts at higher truncations,
we use the tautological lift to parametrize each torsor
fiber.
For $r\geq1$, put
\[
 \mathcal F_{j,r+1}
 :=\rho_r^{-1}([s_{j,r}]).
\]
Via the isomorphism
\[
 H^1(\Gamma,V_{r+1})\cong H^1(G_k,T_{r+1}),
\]
write $t\cdot\xi$ for the torsor action on
$\mathcal F_{j,r+1}$.
The tautological lift $[s_{j,r+1}]$ determines a bijection
\[
 \vartheta_{j,r+1}:H^1(G_k,T_{r+1})
 \xrightarrow{\sim}\mathcal F_{j,r+1},
 \qquad
 t\longmapsto t\cdot[s_{j,r+1}].
\]

Put
\[
 c_j=\frac{\kappa_j}{2g-2}\in H^1(\Gamma,H).
\]
{\color{black}
Since $H$ has weight $-1$, the degree-one comparison
\eqref{eq:weighted-cohomology-comparison} and
\cite[Prop.~8.8 and Thm.~9.11]{HainRational} give
\[
 \begin{aligned}
 H^1(\Gamma,H)
 &\cong \Hom_R(\Gr^W_{-1}\u_{\M},H)\\
 &\cong \Hom_R\left(
   \Lambda_0^3H\oplus\bigoplus_{i=1}^n H_i,H
   \right),
 \end{aligned}
\]
where $H_i\cong H$ is the summand associated with
the $i$th marking.
Under these identifications, $c_j$ corresponds to
the projection
\[
 (v;u_1,\ldots,u_n)\longmapsto u_j
\]
onto $H_j$, identified with $H$
\cite[Cor.~12.6]{HainRational}.
}
We next study whether these classes lift to the next
truncation. The following lemma expresses a projection
of the obstruction as a cup product, allowing us to
apply the injectivity in
Lemma~\ref{lem:moduli-cohomology}.

\begin{lemma}[The projected obstruction]
\label{lem:projected-cup-obstruction}
{\color{black}
Let $r\geq6$ be even, and choose an $R$-equivariant retraction
\[
 \pr_r:V_{r+1}\longrightarrow H\otimes T_r
\]
of the bracket inclusion
$b_r:H\otimes T_r\hookrightarrow V_{r+1}$.
For every $t\in H^1(G_k,T_r)$,
\begin{equation}\label{eq:projected-cup-obstruction}
 \pr_{r,*}\delta_r\bigl(\vartheta_{j,r}(t)\bigr)
 =c_j\smile\inf(t),
\end{equation}
where $\inf(t)\in H^1(\Gamma,T_r)$ is the inflation of $t$.
Equivalently, the diagram
\[
\begin{tikzcd}[column sep=large,row sep=large]
H^1(G_k,T_r)
 \arrow[r,"{t\mapsto c_j\otimes\inf(t)}"]
 \arrow[d,"\vartheta_{j,r}"',"\sim"] &
H^1(\Gamma,H)\otimes H^1(\Gamma,T_r)
 \arrow[d,"\smile"]\\
\mathcal F_{j,r}
 \arrow[r,"\pr_{r,*}\delta_r"'] &
H^2(\Gamma,H\otimes T_r)
\end{tikzcd}
\]
commutes.
}
\end{lemma}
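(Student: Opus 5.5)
We will compute the obstruction by comparing two cocycle representatives on the level of continuous cochains for $\Gamma$, using the tautological lift as a base point. First we fix the reference: the tautological section $s_j$ gives, at every truncation, a genuine homomorphism $s_{j,r+1}:\Gamma\to E_{r+1}$ lifting $s_{j,r}$, so $\delta_r([s_{j,r}])=0$. Let $\tau$ be a continuous $1$-cocycle $\Gamma\to T_r\subseteq V_r$ representing $\inf(t)$; it is pulled back from $G_k$. By the torsor description, $\vartheta_{j,r}(t)$ is represented by the homomorphism $s'(\gamma)=\exp(\tau(\gamma))\,s_{j,r}(\gamma)$ into $E_r$. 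Here $V_r=W_{-r}P/W_{-r-1}P$ is central in $P_r$, and the $\Gamma$-action on it through $\alpha_j$ factors through $R$. We then choose the set-theoretic lift $\tilde s(\gamma)=\exp(\tilde\tau(\gamma))\,s_{j,r+1}(\gamma)\in E_{r+1}$, where $\tilde\tau:\Gamma\to W_{-r}P/W_{-r-1}P$ is a continuous lift of $\tau$ into $P_{r+1}$. The obstruction $\delta_r(s')$ is the class of the $2$-cocycle $(\gamma,\gamma')\mapsto\tilde s(\gamma)\tilde s(\gamma')\tilde s(\gamma\gamma')^{-1}\in V_{r+1}$. The first step is to check that this group-theoretic class agrees with the class $\delta_r$ defined via Hain's Lie obstruction and the injection \eqref{eq:weighted-cohomology-comparison}. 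This follows from the first bijection of Proposition~\ref{prop:completion-graded-section-comparison} and the standard compatibility of Hain's obstruction with the group extension $1\to V_{r+1}\to P_{r+1}\rtimes\cdots\to P_r\to1$ \cite[Sec.~14]{HainRational}.

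The next step is a direct computation. Write $w(\gamma)=\log$ of the $W_{-1}$-component of $s_{j,r+1}(\gamma)$ relative to a Levi-normalized reference. Since the weight $-r$ and weight $-r-1$ parts commute with everything in $W_{-r}$ modulo $W_{-r-1}$, the BCH formula reduces the defect to
\[
 \tilde\tau(\gamma)+\alpha_j(\gamma)\tilde\tau(\gamma')-\tilde\tau(\gamma\gamma')
 \]
plus the bracket of $\alpha_j(\gamma)\tilde\tau(\gamma')$ with the weight-$-1$ difference between $s_{j,r+1}$ and the Levi lift. The first three terms form the coboundary of $\tilde\tau$ viewed in $W_{-r}\p/W_{-r-1}\p$ and lie in $V_{r+1}$ only through the lift, so they contribute a coboundary. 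The remaining term is $[\,z_j(\gamma),\gamma\cdot\tau(\gamma')\,]$, where $z_j:\Gamma\to H$ is the weight-$-1$ cocycle measuring $s_{j}$ against the Levi splitting. Up to the sign and ordering conventions of the bracket $b_r$, this is exactly the cup-product cocycle $z_j\smile\tau$ pushed through $b_r$. Applying $\pr_{r,*}$ and using $\pr_r\circ b_r=\mathrm{id}$ gives $[z_j]\smile\inf(t)$.

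Finally we must identify $[z_j]$ with $c_j$. Its geometric weight-$-1$ map is the projection $(v;u_1,\ldots,u_n)\mapsto u_j$ read off from \eqref{eq:geometric-section-weight-one} with $a_j=1$ and $a_i=0$ otherwise. Under $H^1(\Gamma,H)\cong\Hom_R(\Gr^W_{-1}\u_{\M},H)$, that projection is $c_j$ by \cite[Cor.~12.6]{HainRational}. The main obstacle is the bookkeeping in the second step. There we must ensure that the lift $\tilde\tau$ and the Levi normalization make all weight $\le -2$ cross terms vanish modulo $W_{-r-2}$, and that the signs match the conventions for $\smile$ and $b_r$. To control this we will work entirely in Hain's graded-Lie formulation, where the defect of $\sigma+\tau$ is $[\sigma,\tau]+\tfrac12[\tau,\tau]$ and $[\tau,\tau]$ has weight $-2r<-r-1$. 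Only $[\sigma_{-1},\tau]$ survives, and $\sigma_{-1}$ restricted to the $H$-isotypic part is $c_j$.
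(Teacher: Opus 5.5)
\textbf{There is a genuine gap, in the one step that carries the content.} Your skeleton matches the paper's proof. You twist the tautological lift $\widehat\sigma_j$ by the weight-$(-r)$ cocycle $\alpha$ representing $\inf(t)$. Modulo $W_{-r-2}\p$, only the cross terms of weights $(-1,-r)$ and $(-r,-1)$ survive in the bracket defect. You then project by $\pr_r$ and read off the alternating cup-product formula.

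The problem is the surviving term $[\widehat\sigma_j(x),v]$, with $x=(w;u_1,\ldots,u_n)\in\Gr^W_{-1}\u_{\M}$ and $v\in T_r$. Here $\widehat\sigma_j(x)$ lies in $\Gr^W_{-1}\u_{\cX}$, not in the fiber $H_0=\Gr^W_{-1}\p$. Besides $H_0$, this space contains a copy of $\Lambda_0^3H$ and the marked summands $H_1,\ldots,H_n$. So your $z_j$ is not $H$-valued, and the bracket is the derivation action of $\Gr^W_{-1}\u_{\cX}$ on $\Gr^W_\bullet\p$, not $b_r$.

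Reading $a_j=1$ off \eqref{eq:geometric-section-weight-one} only tells you that the fiber coordinate of $\widehat\sigma_j(x)$ is $c_j(x)=u_j$; that coordinate acts by $\operatorname{ad}$. The other coordinates behave differently:
\begin{enumerate}
\item The $\Lambda_0^3H$-coordinate dies after $\pr_r$ by Schur's lemma, since $T_r$ is $\Sp(H)$-trivial and $\Hom_{\Sp(H)}(\Lambda_0^3H,H)=0$. You do not say this, but it is easy.
\item The marked coordinates are not handled by representation theory, because $\Hom_R(H_i\otimes T_r,H\otimes T_r)\neq0$. A priori
\[
\pr_r[\widehat\sigma_j(x),v]=c_j(x)\otimes v+\sum_i u_i\otimes M_i(v)
\]
with unknown $M_i\in\operatorname{End}(T_r)$, and your formula needs every $M_i=0$. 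Your final sentence, that $\sigma_{-1}$ restricted to the $H$-isotypic part is $c_j$, simply assumes this.
\end{enumerate}

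That vanishing is geometric input, and the paper supplies it as follows:
\begin{enumerate}
\item The family equipped with $s_j$ is pulled back along $f_j:\M\to\cM_{g,1/k}$. Hence the action of $\widehat\sigma_j(x)$ on $\Gr^W_\bullet\p$ factors through $\Gr^W_{-1}(f_j)_*$.
\item $\Gr^W_{-1}(f_j)_*$ kills $H_i$ for $i\neq j$ and sends $u_j$ to $u_j\wedge\check\theta$.
\item By Hain's Lemma~9.6(ii) and Theorem~9.7, $u_j\wedge\check\theta$ acts as $\operatorname{ad}_{u_j}$, because the correction term $\langle u_j,-\rangle\check\theta$ vanishes in $\Gr^W_\bullet\p$.
\end{enumerate}
This gives $\pr_r[\widehat\sigma_j(x),v]=c_j(x)\otimes v$ for every $R$-equivariant retraction $\pr_r$. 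Without this or an equivalent argument, the identification with $c_j\smile\inf(t)$ is not justified.

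Two lesser points:
\begin{enumerate}
\item Your group-theoretic obstruction class must be identified with $\delta_r$ as the paper defines it: Hain's Lie obstruction, pushed through the degree-two injection. You only cite this compatibility.
\item If you stay in the graded Lie setting, as you finally propose, you still need the comparison \eqref{eq:weighted-cohomology-comparison} to be compatible with cup products and coefficient maps. That is what places the Lie cup product in $H^2(\Gamma,H\otimes T_r)$.
\end{enumerate}
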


\begin{proof}
We use Hain's construction of the obstruction
\cite[\textcolor{black}{Sec.}~14.2]{HainRational}, as in the proof of
\cite[\textcolor{black}{Prop.}~18.5]{HainRational}.
Via the degree-one comparison
\[
 H^1(\Gamma,T_r)
 \cong
 \Hom_R\bigl(H_1(\Gr^W_\bullet\u_{\M}),T_r\bigr)
\]
and composition with abelianization, we identify $\inf(t)$
with the corresponding $R$-equivariant Lie algebra
$1$-cocycle
\[
 \alpha:\Gr^W_\bullet\u_{\M}\longrightarrow T_r\subseteq V_r.
\]
By construction, $\alpha$ vanishes on commutators.
Since $T_r$ has weight $-r$, it is supported in weight $-r$.
Let $\sigma_{j,r}$ be the tautological graded section
representing $[s_{j,r}]$, and let $\widehat\sigma_j$ be its
tautological lift representing $[s_{j,r+1}]$.
The section $\sigma_{j,r}+\alpha$ represents
$\vartheta_{j,r}(t)$.
Since $\alpha$ takes values in $T_r\subseteq V_r$, we use
the composite
\[
 \Gr^W_\bullet\u_{\M}
 \xrightarrow{\alpha}T_r
 \hookrightarrow V_r
 \hookrightarrow\Gr^W_\bullet(\p/W_{-r-2}\p),
\]
where the last arrow is the canonical inclusion of the
weight-$-r$ summand. Since $\Gr^W_\bullet\u_{\M}$ is the negative-weight part of
$\Gr^W_\bullet\g_{\M}$, this composite extends uniquely as
a graded map by zero on
$\Gr^W_0\g_{\M}=\operatorname{Lie}R$.
Denote the extension again by $\alpha$, and put
\[
 \widetilde\sigma=\widehat\sigma_j+\alpha.
\]
This is an $R$-equivariant graded linear section whose
reduction modulo $W_{-r-1}\p$ is $\sigma_{j,r}+\alpha$,
the section representing $\vartheta_{j,r}(t)$.
Its failure to preserve brackets determines the
obstruction to lifting $\vartheta_{j,r}(t)$ to
$\mathcal S_{r+1}$. 
For $x,y\in\Gr^W_\bullet\u_{\M}$, the bracket defect is
\begin{equation}\label{eq:graded-obstruction-defect}
 \begin{aligned}
 h_\alpha(x,y)
 &:=[\widetilde\sigma(x),\widetilde\sigma(y)]
     -\widetilde\sigma([x,y])\\
 &=[\widehat\sigma_j(x),\alpha(y)]
   -[\widehat\sigma_j(y),\alpha(x)].
 \end{aligned}
\end{equation}
Indeed, $\widehat\sigma_j$ preserves brackets,
$\alpha([x,y])=0$ because $\alpha$ factors through
abelianization, and $[\alpha(x),\alpha(y)]$ has
weight $-2r$, so vanishes modulo $W_{-r-2}\p$.
The defect vanishes when either argument has weight $0$,
by $R$-equivariance.
Since the reduction of $\widetilde\sigma$ is a Lie algebra
section, $h_\alpha$ takes values in the kernel $V_{r+1}$,
which has weight $-(r+1)$.
Since $\alpha$ is supported in weight $-r$,
\eqref{eq:graded-obstruction-defect} shows that only
pairs of weights $(-1,-r)$ and $(-r,-1)$ can contribute. 
By \cite[\textcolor{black}{Sec.}~14.2]{HainRational}, $h_\alpha$ is an
$R$-invariant $2$-cocycle whose extension by zero on
$\Lie R$ represents
$\delta_r^{\mathrm{Lie}}(\vartheta_{j,r}(t))$.
Under the comparison
\[
 H^2(\Gr^W_\bullet\u_{\M},V_{r+1})^R
 \cong H^2(\u_{\M},V_{r+1})^R
 \lhook\joinrel\longrightarrow H^2(\Gamma,V_{r+1}),
\]
whose last arrow is the degree-two injection in
\eqref{eq:weighted-cohomology-comparison}, its class maps
to $\delta_r(\vartheta_{j,r}(t))$.

We compute the projection of this cocycle.
\textcolor{black}{We also write $c_j$ for the corresponding
$R$-equivariant graded Lie algebra $1$-cocycle
$\Gr^W_\bullet\u_{\M}\to H$.
It vanishes outside weight $-1$, where it is the
projection onto $H_j\cong H$ described above.}

Let $f_j:\M\to\mathcal M_{g,1/k}$ forget all markings
except the $j$th. Since the family equipped with $s_j$
is pulled back along $f_j$, the action on
$\Gr^W_\bullet\p$ induced by $s_j$ factors through
$\Gr^W_\bullet(f_j)_*$.
In weight $-1$, this map kills $H_i$ for $i\ne j$
and sends $u_j\in H_j$ to
$u_j\wedge\check\theta\in(\Lambda^3H)(-1)$.

By \cite[\textcolor{black}{Lem.}~9.6(ii) and \textcolor{black}{Thm.}~9.7]{HainRational},
$u_j\wedge\check\theta$ acts as the inner derivation
$\operatorname{ad}_{u_j}$: the correction term
$\textcolor{black}{\langle u_j,-\rangle}\check\theta$ in Lemma~9.6(ii) vanishes
in $\Gr^W_\bullet\p$.
Evaluating this action on $v\in T_r$ gives
\[
 [\widehat\sigma_j(u_i),v]
 =\delta_{ij}[u_i,v]
 =\delta_{ij}\,b_r(u_i\otimes v)
 \qquad(u_i\in H_i,\ v\in T_r),
\]
where $u_i$ on the right is identified with an element
of $H$.
The action of the remaining summand gives an
$R$-equivariant map
\[
 \Lambda_0^3H\otimes T_r
 \longrightarrow V_{r+1}
 \xrightarrow{\pr_r}H\otimes T_r.
\]
This composite vanishes because $T_r$ is $\Sp(H)$-trivial
and $\Hom_{\Sp(H)}(\Lambda_0^3H,H)=0$.
Also, $c_j(x)=0$ for $x\in\Lambda_0^3H$, since $c_j$
is projection onto the marked summand $H_j$.
Combining this with the calculation on the marked summands
and $\pr_r\circ b_r=\operatorname{id}$, we obtain
\[
 \pr_r[\widehat\sigma_j(x),v]=c_j(x)\otimes v
 \qquad
 (x\in\Gr^W_{-1}\u_{\M},\ v\in T_r).
\]
This identity holds for every $R$-equivariant retraction
of $b_r$.

Applying this identity to \eqref{eq:graded-obstruction-defect}
gives
\[
 \pr_r h_\alpha(x,y)
 =c_j(x)\otimes\alpha(y)-c_j(y)\otimes\alpha(x).
\]
The right-hand side is the standard alternating formula
for the cup product of the Lie algebra $1$-cocycles
$c_j$ and $\alpha$, with values in $H\otimes T_r$.
Thus $\pr_r h_\alpha$ represents $c_j\smile\alpha$ in
Lie algebra cohomology.
Applying the comparison
\eqref{eq:weighted-cohomology-comparison}
with coefficients $H\otimes T_r$, and using its compatibility
with cup products and coefficient maps, gives
\[
 \pr_{r,*}\delta_r\bigl(\vartheta_{j,r}(t)\bigr)
 =c_j\smile\inf(t),
\]
since $\alpha$ corresponds to $\inf(t)$.
This proves the assertion.
\end{proof}

The projected obstruction formula and cup-product
injectivity now allow us to extend the initial
classification to all higher truncations by induction.
\begin{proposition}\label{prop:finite-unipotent-section-classes}
For every $r\geq4$, the maps $\vartheta_{j,r}$ induce
a natural bijection
\begin{equation}\label{eq:finite-unipotent-section-classes}
 \coprod_{j=1}^n H^1(G_k,T_r)
 \xrightarrow{\sim}\mathcal S_r,
 \qquad
 (j,t)\longmapsto\vartheta_{j,r}(t).
\end{equation}
If $r\geq6$ is even, a class lifts to $\mathcal S_{r+1}$
if and only if its parameter $t$ is zero, and this lift
is unique.
\end{proposition}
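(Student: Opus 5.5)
Argue by induction on $r\geq4$, proving the bijection \eqref{eq:finite-unipotent-section-classes} and the lifting criterion together. For $r=4,5$, the groups $H^1(G_k,T_r)$ vanish by \eqref{eq:low-weight-tate-vanishing}. The source of \eqref{eq:finite-unipotent-section-classes} is then $n$ points $(j,0)\mapsto[s_{j,r}]$, and Lemma~\ref{lem:initial-unipotent-sections} gives the bijection. The inductive step from $r$ to $r+1$ splits by parity, using the exact sequence \eqref{eq:moduli-obstruction-sequence}. Nonempty fibers of $\rho_r$ are torsors under $H^1(\Gamma,V_{r+1})\cong H^1(G_k,T_{r+1})$ by Lemma~\ref{lem:moduli-cohomology}, and a class lies in $\operatorname{im}\rho_r$ exactly when $\delta_r$ vanishes on it.

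\emph{Case $r$ odd} (so $r\geq5$). Assume $\mathcal S_r=\{[s_{1,r}],\ldots,[s_{n,r}]\}$ with distinct classes; this is part of the inductive hypothesis, obtained from the preceding even step. Each $[s_{j,r}]$ lifts, namely to $[s_{j,r+1}]$. Hence $\mathcal S_{r+1}=\coprod_j\mathcal F_{j,r+1}$, the union being disjoint because the $[s_{j,r}]$ are distinct. Each $\mathcal F_{j,r+1}$ is identified with $H^1(G_k,T_{r+1})$ by $\vartheta_{j,r+1}$, which gives the bijection at level $r+1$.

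\emph{Case $r\geq6$ even.} By induction $\mathcal S_r=\coprod_j\vartheta_{j,r}(H^1(G_k,T_r))$. Take $\xi=\vartheta_{j,r}(t)$. Choose an $R$-equivariant retraction $\pr_r$ of the injective bracket map $b_r:H\otimes T_r\to V_{r+1}$ from the proof of Lemma~\ref{lem:moduli-cohomology}(ii); it exists by complete reducibility. Lemma~\ref{lem:projected-cup-obstruction} gives $\pr_{r,*}\delta_r(\xi)=c_j\smile\inf(t)$. Now $c_j$ is a nonzero multiple of the basis vector $\kappa_j$, so $c_j\otimes\inf(t)$ vanishes only when $t=0$. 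Since $\inf$ is injective, and the cup product into $H^2(\Gamma,H\otimes T_r)$ is injective by the argument of Lemma~\ref{lem:moduli-cohomology}(ii) (the composite with $b_{r,*}$ is injective, hence so is the cup product itself), $t\neq0$ gives $\delta_r(\xi)\neq0$, and $\xi$ does not lift. If $t=0$, then $\xi=[s_{j,r}]$ lifts to $[s_{j,r+1}]$. The lift is unique because $r+1$ is odd, so $H^1(\Gamma,V_{r+1})=0$ by Lemma~\ref{lem:moduli-cohomology}(i) and the fiber is a single point. It follows that $\mathcal S_{r+1}=\{[s_{1,r+1}],\ldots,[s_{n,r+1}]\}$, with distinct classes since their truncations are distinct. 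This matches \eqref{eq:finite-unipotent-section-classes} at level $r+1$, because $T_{r+1}=0$, and it supplies the hypothesis for the next odd step.

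The main obstacle is the step that turns $c_j\smile\inf(t)\neq0$ into $\delta_r(\xi)\neq0$. This needs the injectivity of $H^1(\Gamma,H)\otimes H^1(\Gamma,T_r)\to H^2(\Gamma,H\otimes T_r)$ on pure tensors $c_j\otimes\inf(t)$, and that $c_j\neq0$. Both come from Lemma~\ref{lem:moduli-cohomology}, so the remaining points are bookkeeping. First, $\vartheta_{j,r}$ is defined using the tautological lift, so the torsor parametrizations at consecutive levels must be set up consistently. Second, naturality in $r$ should hold: truncation and the maps $\vartheta$ commute with the inclusion of parameters.
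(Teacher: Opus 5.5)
Your proposal is correct and follows the paper's proof essentially verbatim: the base cases $r=4,5$ come from Lemma~\ref{lem:initial-unipotent-sections}, and the fibers are parametrized as torsors via the tautological lifts. The projected obstruction formula plus cup-product injectivity forces $t=0$, and the vanishing of $H^1(\Gamma,V_{r+1})$ for odd $r+1$ gives uniqueness of the lift. Your split into odd and even steps only repackages the paper's single inductive step, which starts from the hypothesis that $\mathcal S_{r-1}$ consists of the tautological classes for even $r\geq 6$.
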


\begin{proof}
We follow the induction of
\cite[\textcolor{black}{Prop.}~18.5]{HainRational}, as in
\cite[\textcolor{black}{Prop.}~10.7]{WatanabeHyper2019}.
The cases $r=4,5$ follow from
Lemma~\ref{lem:initial-unipotent-sections}, since
$T_4=T_5=0$.

Suppose that $r\geq6$ is even and that
$\mathcal S_{r-1}$ consists of the tautological classes.
Each fiber
\[
 \mathcal F_{j,r}=\rho_{r-1}^{-1}([s_{j,r-1}])
\]
contains the tautological lift $[s_{j,r}]$ and is a torsor
under $H^1(\Gamma,V_r)\cong H^1(G_k,T_r)$.
Thus the maps $\vartheta_{j,r}$ give
{\color{black}
\[
 \mathcal S_r=\coprod_{j=1}^n\mathcal F_{j,r}
 \cong\coprod_{j=1}^n H^1(G_k,T_r).
\]
}

A class $\vartheta_{j,r}(t)$ lifts to $\mathcal S_{r+1}$
if and only if its obstruction vanishes.
By Lemma~\ref{lem:projected-cup-obstruction} and the
cup-product injectivity in Lemma~\ref{lem:moduli-cohomology},
\[
 \delta_r(\vartheta_{j,r}(t))=0
 \quad\Longrightarrow\quad
 c_j\smile\inf(t)=0
 \quad\Longrightarrow\quad
 c_j\otimes t=0.
\]
Since $c_j=\kappa_j/(2g-2)\ne0$, tensoring with $c_j$
is injective over $\Ql$, so $t=0$.

Conversely, $t=0$ gives the tautological class $[s_{j,r}]$,
which lifts to $[s_{j,r+1}]$.
This lift is unique because its fiber is a torsor under
\[
 H^1(\Gamma,V_{r+1})
 \cong H^1(G_k,T_{r+1})=0,
\]
as $r+1$ is odd.
Hence $\mathcal S_{r+1}$ consists precisely of the
tautological classes, completing the induction.

\end{proof}

The lifting criterion shows that only the tautological
classes can occur in a compatible system.
Passing to the inverse limit therefore gives the
classification for the full kernel $\cP$.
\begin{proposition}\label{prop:unipotent-section-classes}
The unipotent section classes over $\M$ are exactly
the $n$ distinct tautological classes:
\begin{equation}\label{eq:unipotent-section-classification}
 H^1_{\nab}(\Gamma,\cP)(\Ql)
 =\{[s_{1,P}],\ldots,[s_{n,P}]\}.
\end{equation}
\end{proposition}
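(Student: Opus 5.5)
The plan is to pass to the inverse limit using \eqref{eq:inverse-limit-section-classes} of Proposition~\ref{prop:completion-graded-section-comparison}, which identifies $H^1_{\nab}(\Gamma,\cP)(\Ql)$ with $\varprojlim_r\mathcal S_r$. By Proposition~\ref{prop:finite-unipotent-section-classes}, for every odd $r\geq5$ the set $\mathcal S_r$ consists exactly of the tautological classes $[s_{1,r}],\ldots,[s_{n,r}]$. The odd case $r=5$ is Lemma~\ref{lem:initial-unipotent-sections}. For larger odd $r$, the even level $r-1\geq6$ lifts only at parameter $t=0$, and the lift is unique.

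First, I will treat a compatible system $(\xi_r)_r\in\varprojlim_r\mathcal S_r$. For each odd $r\geq5$, the class $\xi_r$ equals $[s_{j(r),r}]$ for a unique index $j(r)$. The classes $[s_{j,r}]$ are distinct for different $j$, because their weight-$-1$ graded maps differ: the graded section $\sigma_j$ has a different weight-$-1$ component. Since the truncation maps send $[s_{j,r+2}]$ to $[s_{j,r}]$, the index $j(r)$ is independent of $r$; call it $j$. Every even-level $\xi_r$ is then the truncation of $\xi_{r+1}=[s_{j,r+1}]$, so it equals $[s_{j,r}]$. Hence $\xi_r=[s_{j,r}]$ for all $r\geq4$, and for smaller $r$ by truncation. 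Consequently $(\xi_r)$ is the image of $[s_{j,P}]$, and injectivity in \eqref{eq:inverse-limit-section-classes} gives $\xi=[s_{j,P}]$.

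Conversely, each tautological section $s_{j,P}=\Phi\circ s_j$ defines an element of $H^1_{\nab}(\Gamma,\cP)(\Ql)$, so all $n$ classes occur. Their distinctness follows because their images in $\mathcal S_1$, or already in $\mathcal S_4$ by Lemma~\ref{lem:initial-unipotent-sections}, are distinct.

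The main obstacle is a compatibility point rather than a new computation. The classes $[s_{j,r}]$ used in Section~\ref{subsec:unipotent-section-classes} must be exactly the truncations of $[s_{j,P}]$ under the bijection of Proposition~\ref{prop:completion-graded-section-comparison}. I would check this using the naturality of that bijection with respect to truncation: $E_r\cong E_P/W_{-r-1}P$, and the tautological section of $E_r$ is the image of $s_{j,P}$. With this in place, the argument above is formal.
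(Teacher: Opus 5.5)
Your proposal is correct and follows essentially the paper's argument: you identify $H^1_{\nab}(\Gamma,\cP)(\Ql)$ with $\varprojlim_r\mathcal S_r$ and then apply the finite-level classification of Proposition~\ref{prop:finite-unipotent-section-classes}. The only cosmetic difference is how you organize the levels: you read off the odd levels, where $\mathcal S_r$ is already exactly tautological, and recover the even levels by truncation, whereas the paper fixes $j$ at level $4$ and uses the lifting criterion to force the even-level parameters $t_r$ to vanish.
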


\begin{proof}
By \eqref{eq:inverse-limit-section-classes},
\[
 H^1_{\nab}(\Gamma,\cP)(\Ql)
 \cong\varprojlim_r\mathcal S_r.
\]
Let $(\xi_r)_{r\ge1}$ be a compatible system.
By Lemma~\ref{lem:initial-unipotent-sections},
$\xi_4=[s_{j,4}]$ for a unique $j$.
For $r\ge4$, a class $\vartheta_{i,r}(t)$ truncates to
$[s_{i,4}]$, so compatibility and distinctness of the
tautological classes in $\mathcal S_4$ force $i=j$.

For even $r\ge6$, write
$\xi_r=\vartheta_{j,r}(t_r)$.
Since $\rho_r(\xi_{r+1})=\xi_r$,
Proposition~\ref{prop:finite-unipotent-section-classes}
forces $t_r=0$.
The finite classification therefore gives
$\xi_r=[s_{j,r}]$ for every $r\ge4$.
Compatibility gives the same equality for $r<4$.
Hence the compatible systems are exactly those induced
by $s_1,\ldots,s_n$.
\end{proof}

\section{Descent from the generic point}
\label{sec:descent}

Retain the notation of Section~\ref{sec:conventions}.
We compare sections over $\M$ with sections over its generic
point, for both the full profinite and geometrically pro-$\ell$
extensions. We first establish descent over a smooth base
using the characteristic-zero case of Stix's specialization
result \cite[\textcolor{black}{Prop.}~91, pp.~88--89]{StixSections}
and purity, and then pass to $\M$ by a finite \'etale level cover.
For the corresponding geometrically pro-$\ell$ descent
argument in positive characteristic, see
\cite[\textcolor{black}{Prop.~10.1 and Cor.~10.2}]{WatanabePositive}.

\subsection{Descent over a smooth base}
\label{subsec:smooth-base-descent}

\begin{proposition}\label{prop:scheme-descent}
Let $S$ be a smooth integral variety over $k$, and let
$\textcolor{black}{f:}Y\to S$ be a smooth proper geometrically connected family
of curves of genus at least two. Put $F=k(S)$.
Restriction to $\Spec F$ induces a bijection between
geometric-conjugacy classes of sections over $S$ and over $F$,
for both the full profinite and geometrically pro-$\ell$
extensions.
\end{proposition}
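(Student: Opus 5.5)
We show separately that the restriction map is injective and surjective. Write $\pi_1(S)$ for the base group and $G_F\to\pi_1(S)$ for the map induced by the generic point. This map is surjective because $S$ is normal and integral. For both extensions, $\Sec_F$ is obtained by pulling back along $G_F\to\pi_1(S)$, with the same geometric kernel ($\Delta$, or its pro-$\ell$ quotient $N$).

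\emph{Injectivity.} Suppose two sections $s,s'$ over $S$ have restrictions to $G_F$ that are conjugate by some $v$ in the kernel. Since $G_F\to\pi_1(S)$ is surjective, the composites $s\circ(G_F\to\pi_1(S))$ and $s'\circ(G_F\to\pi_1(S))$ have the same images as $s$ and $s'$. The identity $s'(\gamma)=v^{-1}s(\gamma)v$ therefore holds for every $\gamma\in\pi_1(S)$. So injectivity is formal and needs no geometric input.

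\emph{Surjectivity.} This is the substantive part. Let $\sigma$ be a section over $F$. By Zariski--Nagata purity on the regular scheme $S$, a continuous homomorphism out of $G_F$ factors through $\pi_1(S)$ once it is unramified at every codimension-one point. It therefore suffices to show the following: for each prime divisor $D\subset S$, with local ring $\mathcal O_{S,D}$ a DVR and $F_D^h$ the henselization, the restriction of $\sigma$ to the inertia $I_D\subset G_{F_D^h}$ is trivial, after composing with $\pi_1(Y_{F})\to\pi_1(Y)$.

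The family has good reduction at $D$, because $Y\to S$ is smooth and proper. Over the henselian DVR with residue field of characteristic zero, Stix's specialization result \cite[Prop.~91]{StixSections} applies in both the profinite and pro-$\ell$ settings. The argument runs as follows. Inertia is procyclic, $\widehat{\Z}(1)$, and acts trivially on the geometric fundamental group of the generic fiber, because that group is the specialization of $\pi_1$ of the special fiber. The restricted extension over the local field then splits as a product over inertia. The image $\sigma(\tau)$ of a generator $\tau$ of inertia is therefore a central element of $\Delta$ (or $N$) satisfying a Kummer-type compatibility: conjugating $\sigma(\tau)$ by a Frobenius-like lift gives the cyclotomic twist $\sigma(\tau)^{\chi}$. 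Triviality of the center of $\Delta$ (respectively $N$, whose graded Lie algebra is centerless by Lemma~\ref{lem:lie-center}) forces $\sigma(\tau)=1$. Hence $\sigma$ is unramified along $D$, so it extends over $\Spec\mathcal O_{S,D}$.

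Purity then glues these local extensions. The factorization of $\sigma$ through $G_F\to E_Y$ shows that its kernel contains the normal closure of all inertia groups. That normal closure is exactly the kernel of $G_F\to\pi_1(S)$, because a homomorphism from $G_F$ unramified in codimension one factors through $\pi_1(S)$. This step needs care, since the target $E_Y$ is an extension rather than $\pi_1(S)$ itself. We handle it by working with the fiber product $G_F\times_{\pi_1(S)}E_Y\cong E_{Y_F}$, noting that $\sigma$ lands in the image of $E_{Y_F}\to E_Y$. The descended map is then automatically a section, because its composite with the projection agrees with the identity on the dense image of $G_F$.

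The main obstacle is the local statement that inertia acts trivially through $\sigma$. For the pro-$\ell$ quotient $N$ this requires checking that Stix's characteristic-zero argument uses only centerlessness and the action of inertia on the kernel, not the full profinite group. We therefore verify centerlessness of $N$ first, via Proposition~\ref{prop:intersection} together with Lemma~\ref{lem:lie-center}, before invoking the argument.
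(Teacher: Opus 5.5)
Your overall structure matches the paper's. Injectivity comes from surjectivity of $G_F\to\pi_1(S)$. Surjectivity comes from showing that the composite $G_F\to\pi_1(Y_F)\to\pi_1(Y)$ kills every inertia group at codimension-one points, and then using purity to identify $\ker(G_F\to\pi_1(S))$ with the closed normal subgroup those inertia groups generate.

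The gap is in the local step, where the mechanism you give is wrong. You correctly observe that the local extension restricted to inertia is $\Delta\times I_v$. But then a section restricted to $I_v$ is just the graph of a homomorphism $a\colon I_v\to\Delta$, the ramification homomorphism. Nothing forces $a(\tau)$ to be central; and if it were, your Kummer compatibility would play no role.

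In fact centerlessness cannot be what kills $a$. Take the constant family $C\times\Spec R$, where $R$ is the henselization of $\mathbb{C}[t]$ at $t=0$. Then $G_L\cong\widehat{\Z}(1)$ is all inertia and $\pi_1(C_L)\cong\Delta\times\widehat{\Z}(1)$. Every $x\in\Delta$ gives a section $\tau\mapsto(x,\tau)$, which is ramified whenever $x\neq1$, even though $\Delta$ and $N$ are centerless. So your argument proves too much. It also shows that ``residue field of characteristic zero'' is not a sufficient hypothesis for the local vanishing.

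The missing input is arithmetic. Here $\kappa(v)$ is finitely generated over $\Q$, because $S$ is a variety over $k$. This is exactly the hypothesis under which Stix's Prop.~91 rules out tame ramification. As far as I recall, that argument uses equivariance of the ramification homomorphism for $G_{\kappa(v)}$, acting through the specialized section, together with a weight/cyclotomic argument excluding nontrivial equivariant maps out of $\widehat{\Z}(1)$. In any case, it is not a centerlessness argument.

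For the profinite case, invoke that result with this hypothesis instead of your sketch. For the geometrically pro-$\ell$ case, your plan to check that Stix's argument ``uses only centerlessness'' would fail, because it does not. You need the pro-$\ell$ form of the local vanishing, which the paper takes from Stix's discussion in Sec.~14.2; it uses that $\ell$ is invertible in $R_v$ and that $\kappa(v)$ is finitely generated over $\Q$. With that local input supplied, the rest of your argument, including the purity step and the factorization through $\pi_1(S)$, goes through as in the paper.
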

\begin{proof}
Choose compatible geometric basepoints. Since $S$ is normal,
the natural homomorphism
\[
 q_S:G_F\twoheadrightarrow\textcolor{black}{\pi_1}(S)
\]
is surjective \cite[Tag~0BQM]{Stacks}.
The homotopy exact sequences for $Y_F/F$ and $Y/S$
have the same geometric kernel $\textcolor{black}{\pi_1}(Y_{\bar F})$;
injectivity of the fiber maps follows from
\cite[\textcolor{black}{Lem.}~4.1, p.~716]{HainRational}.
They fit into the commutative diagram with exact rows
\[
\begin{tikzcd}[column sep=small]
1 \arrow[r] &
\textcolor{black}{\pi_1}(Y_{\bar F}) \arrow[r] \arrow[d,equal] &
\textcolor{black}{\pi_1}(Y_F) \arrow[r] \arrow[d] &
G_F \arrow[r] \arrow[d,"q_S"] & 1 \\
1 \arrow[r] &
\textcolor{black}{\pi_1}(Y_{\bar F}) \arrow[r] &
\textcolor{black}{\pi_1}(Y) \arrow[r,"\textcolor{black}{f_*}"'] &
\textcolor{black}{\pi_1}(S) \arrow[r] & 1.
\end{tikzcd}
\]
The short five lemma applied to the pullback of the bottom
row gives
\begin{equation}\label{eq:descent-generic-pullback}
 \textcolor{black}{\pi_1}(Y_F)
 \cong\textcolor{black}{\pi_1}(Y)\times_{\textcolor{black}{\pi_1}(S)}G_F.
\end{equation}
Quotienting both extensions by the kernel of the maximal
pro-$\ell$ quotient of the common geometric group gives
the analogous identification for the geometrically
pro-$\ell$ extensions.

In the full profinite case, let
\[
 s:G_F\longrightarrow\textcolor{black}{\pi_1}(Y_F)
\]
be a section, and define $\tau$ to be the composite
\[
 \tau:G_F\xrightarrow{s}\textcolor{black}{\pi_1}(Y_F)
 \longrightarrow\textcolor{black}{\pi_1}(Y).
\]
Then $\textcolor{black}{f_*}\circ\tau=q_S$, and under
\eqref{eq:descent-generic-pullback}, the section $s$ is
given by
\[
 \gamma\longmapsto(\tau(\gamma),\gamma).
\]
To show that $s$ descends to a section over $S$, it
suffices to prove that $\tau$ kills $\ker q_S$.
Indeed, $\tau$ then factors uniquely as
\[
 G_F\xrightarrow{q_S}\textcolor{black}{\pi_1}(S)
 \xrightarrow{\bar s}\textcolor{black}{\pi_1}(Y).
\]
The homomorphism $\bar s$ is continuous because $q_S$
is a quotient map of profinite groups. Moreover,
$\textcolor{black}{f_*}\circ\tau=q_S$ and the surjectivity of $q_S$
imply that $\textcolor{black}{f_*}\circ\bar s=\operatorname{id}$.
Thus $\bar s$ is a section whose pullback to $G_F$
is the original section $s$.
The same construction applies in the geometrically
pro-$\ell$ case, with the fundamental groups of $Y_F$
and $Y$ replaced by their geometrically pro-$\ell$
quotients.

For each codimension-one point $v\in S$, put
\[
 R_v=\mathcal O_{S,v}^{\mathrm h},
 \qquad L=\operatorname{Frac}(R_v).
\]
Since $S$ is smooth, $R_v$ is an excellent henselian
discrete valuation ring of equal characteristic zero,
with residue field $\kappa(v)$ finitely generated over
$\Q$. 
Choose an extension $\bar v$ of $v$ to $\bar F$,
and use it to regard $L$ as a subfield of $\bar F$.
Then $G_L$ identifies with the decomposition subgroup
$D_{\bar v}\subseteq G_F$.
Write $\iota:G_L\hookrightarrow G_F$ for the inclusion
and $I_v\subseteq G_L$ for the corresponding inertia subgroup.
The base change
\[
 Y_{R_v}:=Y\times_S\Spec R_v\longrightarrow\Spec R_v
\]
is smooth and proper, so the family has good reduction
at $v$.

First consider the full profinite case. The natural square
\[
\begin{tikzcd}[column sep=large]
\textcolor{black}{\pi_1}(Y_L) \arrow[r] \arrow[d] &
G_L \arrow[d,hook,"\iota"] \\
\textcolor{black}{\pi_1}(Y_F) \arrow[r] & G_F
\end{tikzcd}
\]
is a pullback square:
\[
 \textcolor{black}{\pi_1}(Y_L)
 \cong\textcolor{black}{\pi_1}(Y_F)\times_{G_F}G_L.
\]
Indeed, the two arithmetic fundamental-group extensions
have the same geometric kernel $\textcolor{black}{\pi_1}(Y_{\bar F})$.
Let
\[
 s_v:G_L\longrightarrow\textcolor{black}{\pi_1}(Y_L)
\]
be the pullback of $s$ along $\iota$. Thus, under this
identification,
\[
 s_v(\gamma)=(s(\iota(\gamma)),\gamma).
\]
The special-fiber inclusion
$j:Y_{\kappa(v)}\hookrightarrow Y_{R_v}$ induces an
isomorphism on fundamental groups, and the specialization
map is the composite
\[
 \mathrm{sp}:\textcolor{black}{\pi_1}(Y_L)
 \longrightarrow\textcolor{black}{\pi_1}(Y_{R_v})
 \xrightarrow{j_*^{-1}}\textcolor{black}{\pi_1}(Y_{\kappa(v)});
\]
see \cite[\textcolor{black}{Sec.}~8.1]{StixSections}.
The family $Y_{R_v}\to\Spec R_v$ satisfies the hypotheses
of \cite[\textcolor{black}{Prop.}~91, pp.~88--89]{StixSections}.
In Stix's terminology, the ramification homomorphism
of $s_v$ is
\[
 \operatorname{ram}_{s_v}
 :=(\mathrm{sp}\circ s_v)|_{I_v}.
\]
That proposition rules out nontrivial tame ramification
when the residue field is finitely generated over its
prime field. Since $\kappa(v)$ has characteristic zero,
there is no wild inertia, and hence
\[
 (\mathrm{sp}\circ s_v)(I_v)=1.
\]

The compatibility with $\tau$ is expressed by the following
commutative diagram, whose unlabeled arrows are the
natural homomorphisms:
\[
\begin{tikzcd}[column sep=1.527em,row sep=1.2em]
\textcolor{black}{\pi_1}(Y_L)
  \arrow[rr]
  \arrow[dd]
  \arrow[dr]
&& G_L
  \arrow[ll,bend right=18,"s_v"']
  \arrow[dd]
  \arrow[dr,hook,"\iota"]
& \\
& \textcolor{black}{\pi_1}(Y_F)
  \arrow[rr]
  \arrow[dd]
&& G_F
  \arrow[ll,bend right=18,"s"']
  \arrow[dd,"q_S"] \\
\textcolor{black}{\pi_1}(Y_{R_v})
  \arrow[rr]
  \arrow[dr]
&& G_{\kappa(v)}
  \arrow[dr]
& \\
& \textcolor{black}{\pi_1}(Y)
  \arrow[rr,"\textcolor{black}{f_*}"']
&& \textcolor{black}{\pi_1}(S).
\end{tikzcd}
\]
The top face is the pullback square considered above.
Here the lower back row uses the canonical identification
$\textcolor{black}{\pi_1}(\Spec R_v)\cong G_{\kappa(v)}$.
Since $j_*$ is an isomorphism, the vanishing above implies
that the composite
\[
 G_L\xrightarrow{s_v}\textcolor{black}{\pi_1}(Y_L)
 \longrightarrow\textcolor{black}{\pi_1}(Y_{R_v})
\]
is trivial on $I_v$.
Composing further with the natural map to
$\textcolor{black}{\pi_1}(Y)$ gives $\tau\circ\iota$, by commutativity
of the diagram. Therefore
\[
 \tau(I_v)=1.
\]

For a section given directly in the geometrically
pro-$\ell$ quotient, apply the local ramification argument
in \cite[\textcolor{black}{Sec.}~14.2, pp.~178--179, discussion preceding
\textcolor{black}{Prop.}~199]{StixSections} to $Y_{R_v}\to\Spec R_v$.
Since $\ell$ is invertible in $R_v$ and $\kappa(v)$ is
finitely generated over $\Q$, the composite of the local
section with specialization is again trivial on $I_v$.
The same compatibility with the map to $Y$ therefore gives
$\tau(I_v)=1$ in the geometrically pro-$\ell$ quotient
of $\textcolor{black}{\pi_1}(Y)$.

Since $S$ is regular and excellent, purity of the branch
locus gives
\begin{equation}\label{eq:descent-inertia-kernel}
 \ker q_S=
 \overline{\langle\!\langle I_v:
 v\in S^{(1)}\rangle\!\rangle}_{G_F},
\end{equation}
where $S^{(1)}$ denotes the set of codimension-one points
of $S$, and the right-hand side is the closed normal
subgroup of $G_F$ generated by their inertia groups
\cite[Tag~0BMB]{Stacks}.
In either case, $\ker\tau$ is a closed normal subgroup
containing every $I_v$, so it contains $\ker q_S$.
Thus $s$ descends to a continuous section $\bar s$
over $S$, as explained above.

Finally, the geometric kernel is unchanged by restriction.
If two sections over $S$ become geometrically conjugate
over $F$, surjectivity of $q_S$ shows that the same
geometric element already conjugates them over $S$.
Conversely, geometric conjugacy over $S$ is preserved
by restriction. This proves both bijections.
\end{proof}

\subsection{Passage to the moduli stack and the generic curve}
\label{subsec:stack-descent}

A finite \'etale level cover reduces descent over $\M$
to Proposition~\ref{prop:scheme-descent}.

\begin{proposition}\label{prop:stack-descent}
Let $g\geq3$, $n\geq0$, and $K=k(\M)$.
Restriction to the generic point induces bijections
\[
 \Sec_{\M}(\Delta)\xrightarrow{\sim}\Sec_K(\Delta),
 \qquad
 \Sec_{\M}(N)\xrightarrow{\sim}\Sec_K(N).
\]
\end{proposition}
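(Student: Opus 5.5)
The plan is to pass to the symplectic level-$3$ cover $T=\cM_{g,n/k'}[3]\to\M$ used in Lemma~\ref{lem:universal-homotopy}, with $k'=k(\mu_3)$. This cover is a smooth geometrically connected scheme over $k'$ and is finite \'etale over $\M$. We apply Proposition~\ref{prop:scheme-descent} to $X_T\to T$. We then descend along the finite Galois quotient $\Gamma/\Gamma_T$, where $\Gamma_T=\pi_1(T)$ is an open normal subgroup of $\Gamma$ (after enlarging to a Galois cover if needed). Put $K_T=k'(T)$. Since $g\geq3$ and the generic curve has no automorphisms, $K=k(\M)$ and $G_K\to\Gamma$ is surjective. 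Also, $G_{K_T}$ is the preimage of $\Gamma_T$ in $G_K$, and the kernel of $G_K\to\Gamma$ equals the kernel of $G_{K_T}\to\Gamma_T$. The geometric kernel $\Delta$ (resp.\ $N$) is the same in all four extensions, and $\pi_1(X_K)\cong E\times_\Gamma G_K$ by the same short five lemma argument as in \eqref{eq:descent-generic-pullback}.

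\emph{Surjectivity.} Let $s:G_K\to\pi_1(X_K)$ be a section, and let $\tau:G_K\to E$ be its composite with the projection. Restrict $s$ to $G_{K_T}$. By Proposition~\ref{prop:scheme-descent}, $\tau|_{G_{K_T}}$ factors through $\Gamma_T$, i.e.\ it kills $\ker(G_{K_T}\to\Gamma_T)=\ker(G_K\to\Gamma)$. Hence $\tau$ itself kills $\ker(G_K\to\Gamma)$. Since $G_K\to\Gamma$ is a quotient map of profinite groups, $\tau$ factors continuously through $\Gamma$, and this gives a section over $\M$ restricting to $s$. The geometrically pro-$\ell$ case is identical, using $E_\ell$ and the pro-$\ell$ version of Proposition~\ref{prop:scheme-descent}. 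This argument needs only the kernel comparison, not a separate descent of conjugacy classes.

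\emph{Injectivity.} Suppose two sections over $\M$ become $\Delta$-conjugate (resp.\ $N$-conjugate) after pullback to $G_K$. Because $G_K\to\Gamma$ is surjective, the same conjugating element conjugates them over $\Gamma$. This is exactly the last paragraph of the proof of Proposition~\ref{prop:scheme-descent}.

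The main obstacle is the generic-point bookkeeping for the stack. One must justify that $G_K\to\Gamma$ is surjective, and that the kernels over $K$ and over $K_T$ coincide. Surjectivity needs normality of $\M$ together with the fact that the generic point of $\M$ is a genuine $\Spec K$ because the generic automorphism group is trivial for $g\geq3$ \cite[Prop.~5.1]{HainRational}. For the kernels, note that $T\times_{\M}\Spec K=\Spec K_T$ is connected, which gives the identification of the open subgroups. I would verify both points through the Galois-cover description $\Gamma/\Gamma_T\cong G_K/G_{K_T}$, with compatible basepoints. The case $n=0$ is covered the same way, since $g\geq3$.
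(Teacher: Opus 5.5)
Your proposal is correct and follows the paper's proof essentially step for step. You pass to the level-$3$ cover $\cM_{g,n/k'}[3]$ and use the Cartesian generic square to get $\ker(G_{K_T}\to\Gamma_T)=\ker(G_K\to\Gamma)$. Proposition~\ref{prop:scheme-descent} then shows that $\tau|_{G_{K_T}}$ kills this kernel, so $\tau$ factors through $\Gamma$, and injectivity follows from surjectivity of $G_K\to\Gamma$. As you note yourself, enlarging to a Galois cover is unnecessary, since only the kernel comparison is used.
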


\begin{proof}
The generic stabilizer of $\M$ is trivial, so its generic
point is $\Spec K$. The map $q:G_K\to\Gamma$ is surjective,
since a connected finite \'etale cover of the normal integral
stack $\M$ remains connected over the generic point.
The generic extensions are the pullbacks of
\eqref{eq:profinite-extension} and \eqref{eq:proell-extension},
as in \eqref{eq:descent-generic-pullback}.

Put $k'=k(\mu_3)$ and \textcolor{black}{write
$\M[3]:=\cM_{g,n/k'}[3]$. Consider} the connected finite
\'etale level-$3$ cover
\[
 \textcolor{black}{\M[3]}\longrightarrow \M
\]
of Subsection~\ref{subsec:universal-extensions}. \textcolor{black}{Here $\M[3]$ is a smooth, geometrically
connected fine moduli scheme over $k'$.}
Set $K'=k'(\textcolor{black}{\M[3]})$.
\textcolor{black}{Put $\Gamma[3]:=\pi_1(\M[3])$.}
Let
\[
 q':G_{K'}\longrightarrow \textcolor{black}{\Gamma[3]}
\]
be the homomorphism induced by the generic point of
the level cover.
With compatible basepoints, we have the commutative
diagram with exact rows
\[
\begin{tikzcd}[column sep=small]
1 \arrow[r] &
\ker q \arrow[r] \arrow[d,equal] &
G_{K'} \arrow[r,"q'"] \arrow[d,hook] &
\textcolor{black}{\Gamma[3]}
  \arrow[r] \arrow[d,hook] & 1 \\
1 \arrow[r] &
\ker q \arrow[r] &
G_K \arrow[r,"q"'] &
\Gamma \arrow[r] & 1.
\end{tikzcd}
\]
Indeed, the right-hand square is Cartesian, so
\[
 G_{K'}
 =q^{-1}\bigl(\textcolor{black}{\Gamma[3]}\bigr).
\]
Consequently, $\ker q'=\ker q$ as subgroups of $G_K$.

For the full profinite extension, let
\[
 s:G_K\longrightarrow\textcolor{black}{\pi_1}(X_K)
\]
be a section, and define $\tau$ to be the composite
\[
 \tau:G_K\xrightarrow{s}\textcolor{black}{\pi_1}(X_K)
 \longrightarrow\textcolor{black}{\pi_1}(\cX).
\]
As in Proposition~\ref{prop:scheme-descent}, it suffices
to show that $\tau$ kills $\ker q$.

Write
\[
 \textcolor{black}{\cX[3]}
 :=\cX\times_{\M}\textcolor{black}{\M[3]}
\]
for the universal complete curve over the level cover.
Let $s'$ be the pullback of $s$ to $G_{K'}$.
Proposition~\ref{prop:scheme-descent} gives a section
\[
 \bar s':\textcolor{black}{\Gamma[3]}
 \longrightarrow\textcolor{black}{\pi_1}(
   \textcolor{black}{\cX[3]})
\]
whose pullback is $s'$.
These sections fit into the commutative diagram
\[
\begin{tikzcd}[column sep=1.5em,row sep=1.5em]
\textcolor{black}{\pi_1}(X_{K'})
  \arrow[rr]
  \arrow[dd]
  \arrow[dr,hook]
&& G_{K'}
  \arrow[ll,bend right=18,"s'"']
  \arrow[dd,"q'" near end]
  \arrow[dr,hook]
& \\
& \textcolor{black}{\pi_1}(X_K)
  \arrow[rr]
  \arrow[dd]
&& G_K
  \arrow[ll,bend right=18,"s"'{pos=0.25}]
  \arrow[dd,"q"] \\
\textcolor{black}{\pi_1}(\textcolor{black}{\cX[3]})
  \arrow[rr]
  \arrow[dr,hook]
&& \textcolor{black}{\Gamma[3]}
  \arrow[ll,bend right=18,"\bar s'"'{pos=0.25}]
  \arrow[dr,hook]
& \\
& \textcolor{black}{\pi_1}(\cX)
  \arrow[rr,"\pi_*"']
&& \Gamma.
\end{tikzcd}
\]
Here $q'$ is the homomorphism introduced above, and
$\ker q'=\ker q$ by the earlier Cartesian-square argument.
The top and back faces are pullback squares:
$s'$ is the pullback of $s$ along $G_{K'}\hookrightarrow G_K$
and of $\bar s'$ along $q'$.

By commutativity, the composite
\[
 G_{K'}\xrightarrow{q'}\textcolor{black}{\Gamma[3]}
 \xrightarrow{\bar s'}
 \textcolor{black}{\pi_1}(\textcolor{black}{\cX[3]})
 \longrightarrow\textcolor{black}{\pi_1}(\cX)
\]
equals $\tau|_{G_{K'}}$.
It follows that $\tau$ kills $\ker q'=\ker q$.
Thus $\tau$ factors through a continuous section
\[
 \bar s:\Gamma\longrightarrow\textcolor{black}{\pi_1}(\cX)
\]
whose pullback to $G_K$ is $s$.

The same argument applies to the geometrically pro-$\ell$
extensions, using the corresponding descent statement
in Proposition~\ref{prop:scheme-descent}.
In both cases, surjectivity of $q$ shows that two sections
over $\M$ are geometrically conjugate if and only if
their restrictions to $G_K$ are geometrically conjugate.
\end{proof}

\textcolor{black}{Proposition~\ref{prop:stack-descent}
allows us to pass from section classifications over $\M$
to those over its generic point. After the preparations
in Section~\ref{sec:full}, we prove the pro-$\ell$ and
full profinite classifications in
Section~\ref{sec:main-proofs}.}
\section{Full profinite sections}
\label{sec:full}

Throughout this section, $k/\Q$ is finitely generated,
$g\geq3$, and $n\geq1$.
Propositions~\ref{prop:unipotent-section-classes}
and~\ref{prop:singleton} classify the geometrically
pro-$\ell$ sections over $\M$ for every prime $\ell$. We now pass to the
full profinite kernel. The argument uses simultaneous motion
of the markings, the descent of Section~\ref{sec:descent}, and
Mochizuki's open Hom theorem.

\subsection{Profinite surface groups}
\label{subsec:profinite-surface-groups}

\textcolor{black}{We recall that profinite surface groups have trivial center
and establish a cohomological criterion for their endomorphisms.}

\begin{lemma}\label{lem:profinite-center}
The full profinite surface group $\Delta$ of genus at least
two has trivial center.
\end{lemma}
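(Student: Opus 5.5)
The plan is to reduce the claim to the pro-$\ell$ case, which the paper has already handled: the maximal pro-$\ell$ quotient $N=\Delta^{(\ell)}$ has trivial center, and $\Delta$ is residually pro-$\ell$ on every finite-index subgroup.

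Let $z\in Z(\Delta)$. The argument has three steps.

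\emph{Step 1: the center of $N$ is trivial.} By Proposition~\ref{prop:intersection}, $N$ embeds in $P$ and satisfies $N\cap L^rP=L^rN$. Suppose $x\in N$ is nontrivial and central. Since $\bigcap_r L^rN=\{1\}$, there is an $r$ with $x\in L^rN\setminus L^{r+1}N$. Its class in $\Gr^r_LN\cong\Lambda_r$ is then central in $\Lambda$. Choose the largest $a$ with $x\in\ell^a\Lambda_r$; this is possible because $\Lambda_r$ is free of finite rank. Dividing by $\ell^a$ gives an element whose reduction mod $\ell$ is nonzero and central in $\Lambda\otimes\Fl$, since the bracket of $\ell^{-a}x$ with $\Lambda_1$ is zero. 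This contradicts Lemma~\ref{lem:lie-center}. Alternatively, one can work in the torsion-free $\Lambda$ directly: centrality in $\Lambda$ already gives centrality in $\Lambda\otimes\Q$, which contradicts Lemma~\ref{lem:lie-center} with $F=\Q$. Either way, $Z(N)=1$, and so the image of $z$ in $N$ is trivial.

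\emph{Step 2: pass to open subgroups.} Let $U\subseteq\Delta$ be open. Then $U$ is the profinite completion of the fundamental group of a finite cover, which is a surface of genus $\geq2$. So $U$ is itself a profinite surface group of genus at least two, and Step 1 applies to it. Take $U$ containing $z$, for instance the closed subgroup generated by $z$ together with any open normal subgroup. Since $z$ centralizes $\Delta\supseteq U$, $z$ lies in $Z(U)$, so $z$ dies in $U^{(\ell)}$.

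\emph{Step 3: conclude $z=1$.} Suppose $z\neq1$. Choose an open normal $V\subseteq\Delta$ with $z\notin V$, and put $U=\langle V,z\rangle$, which is open. The image of $z$ in $U/V$ is a nontrivial element of a finite group. I want to arrange that this image survives in some pro-$\ell$ quotient of $U$. Replace $z$ by a suitable power $z^m$ whose image in $U/V$ has prime order $\ell$. (If $z^m$ happens to lie in $V$, redo the choice of $V$ for the element $z^m$ itself; note $z^m$ is still central.) Now shrink to $U=\langle V,z^m\rangle$. Then $U/V$ is cyclic of order $\ell$, hence an $\ell$-group, and $z^m$ survives in $U^{(\ell)}$. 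This contradicts Step 2 applied to $U$ and $z^m$.

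There is a subtle point in that last step: $z$ may have no nontrivial powers outside every $V$. But $\Delta$ is torsion-free, so the closure $\overline{\langle z\rangle}\cong\prod_p\Z_p^{e_p}$ is a nontrivial procyclic group. It therefore has a nontrivial $\Z_\ell$-part for some prime $\ell$, and this is the $\ell$ to use.

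The main obstacle is Step 3, specifically choosing the open subgroup and the prime so that the central element survives in a pro-$\ell$ quotient. If torsion-freeness of $\Delta$ is awkward to invoke, there is a fallback. One can cite the standard result that profinite surface groups have trivial center: Anderson, or Nakamura via the fact that the centralizer of any open subgroup is trivial. One could also argue that a nontrivial central element would make $\Delta$ virtually have an infinite cyclic normal subgroup. That is impossible because the Euler characteristic, computed via $H^2$ with $\Fl$-coefficients of open subgroups, is nonzero, whereas a central $\Z_\ell$ would force $\chi=0$ through the Hochschild--Serre spectral sequence.
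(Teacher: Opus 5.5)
Your proof is correct, but it takes a different route from the paper.

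The paper proves the lemma in one line. It cites the slimness of $\Delta$ (Mochizuki--Tamagawa, Prop.~1.4: the centralizer in $\Delta$ of every open subgroup is trivial) and takes the open subgroup to be $\Delta$ itself.

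You instead derive centerlessness from the pro-$\ell$ results of Section~\ref{sec:integral-conjugation}, and the argument holds up:
\begin{itemize}
\item Proposition~\ref{prop:intersection} and Lemma~\ref{lem:lie-center} give $Z(U^{(\ell)})=1$ for every open $U\subseteq\Delta$. Each such $U$ is a profinite surface group of genus $1+[\Delta:U](g-1)\geq2$.
\item Given a central $z\neq1$, take an open normal $V$ with $z\notin V$. Let $d$ be the order of $z$ modulo $V$, let $\ell$ be a prime dividing $d$, and put $m=d/\ell$ and $U=V\langle z^m\rangle$.
\item Then $U/V\cong\Z/\ell$, so $U\to U/V$ factors through $U^{(\ell)}$ and detects the central element $z^m$. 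This contradicts $Z(U^{(\ell)})=1$.
\end{itemize}

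Your approach reuses machinery the paper builds anyway and avoids the external citation. Its cost is the standard identification of open subgroups of $\Delta$ with profinite surface groups via finite covers and Riemann--Hurwitz. The citation is shorter and gives full slimness. Your argument also gives slimness if you choose $V$ inside an open subgroup centralized by $z$.

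Two clean-ups are needed:
\begin{itemize}
\item With $m=d/\ell$, the element $z^m$ automatically lies outside $V$. So the parenthetical ``redo the choice of $V$'' is vacuous. The ``subtle point'' paragraph invoking torsion-freeness of $\Delta$ and $\overline{\langle z\rangle}\cong\prod_p\Z_p^{e_p}$ is also unnecessary, so you never need to prove torsion-freeness.
\item In the alternative version of Step~1, $\Lambda\otimes_{\Z}\Q=\Lambda\otimes_{\Zl}\Ql\cong\mathfrak s_{\Ql}$. The relevant case of Lemma~\ref{lem:lie-center} is therefore $F=\Ql$, not $F=\Q$.
\end{itemize}
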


\begin{proof}
By \cite[\textcolor{black}{Prop.}~1.4, p.~82]{MochizukiTamagawaConfiguration},
$\Delta$ is slim: the centralizer in $\Delta$ of every open
subgroup is trivial. Taking the open subgroup to be $\Delta$
itself gives $Z(\Delta)=1$.
\end{proof}

The following criterion follows from the cohomology of
proper curves and the Hopfian property of finitely generated
profinite groups. We include the proof to make explicit
the use of all primes.

\begin{lemma}\label{lem:all-prime-auto}
Let $F:\Delta\to\Delta$ be a continuous endomorphism.
If $F^*$ is the identity on $H^1_{\cts}(\Delta,\Fl)$
for every prime $\ell$, then $F$ is an automorphism.
\end{lemma}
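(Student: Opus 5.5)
The plan is to show that $F$ is surjective and then invoke the Hopfian property of topologically finitely generated profinite groups: a continuous surjective endomorphism of such a group is an automorphism. The group $\Delta$ is the profinite completion of $\Pi_g$, hence topologically finitely generated, so only surjectivity needs proof.

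Surjectivity comes from a Frattini argument, one prime at a time. Put $D=F(\Delta)$, a closed subgroup, and suppose $D\neq\Delta$. Then $D$ lies in a proper open subgroup, and hence in some maximal open subgroup $M$. For a pronilpotent group every maximal open subgroup is normal of prime index, but $\Delta$ is not pronilpotent, so this step needs care. We handle it by working with the maximal pro-$\ell$ quotients $N_\ell=\Delta^{(\ell)}$. Since $F$ is continuous, it induces endomorphisms $F_\ell:N_\ell\to N_\ell$. The hypothesis says $F_\ell$ induces the identity, in particular a surjection, on $H_1(N_\ell,\Fl)=N_\ell/\Phi(N_\ell)$, which is dual to $H^1_{\cts}(\Delta,\Fl)$. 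By the Frattini property of finitely generated pro-$\ell$ groups, $F_\ell$ is surjective, and since $N_\ell$ is Hopfian, $F_\ell$ is an automorphism. Consequently $F$ is surjective onto every $\ell$-group quotient, and hence onto every nilpotent quotient of $\Delta$.

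To pass from the nilpotent quotients to all of $\Delta$, we use the cohomology of proper curves as the paper suggests. The idea is to replace $\Delta$ by open subgroups and apply the same argument to them. Let $\Delta'=F^{-1}(U)$ for an open normal subgroup $U$. Then $F$ restricts to a map $\Delta'\to U$. The intended step is to show that $F$ has the same index behaviour, so that $[\Delta:F^{-1}(U)]=[\Delta:U]$, and to apply the pro-$\ell$ argument to $F|:\Delta'\to U$. Both groups are surface groups, of genera determined by their indices through Riemann--Hurwitz. Surjectivity on every $\ell$-quotient of every open subgroup would then show that the image $D$ meets every coset of every open normal subgroup, so $D$ is dense, and therefore $D=\Delta$ because $D$ is closed.

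The main obstacle is the step of controlling $F$ on open subgroups. The hypothesis only concerns $H^1(\Delta,\Fl)$, so a different route is probably needed. The fallback is a solvable-quotient argument. Any finite quotient $\Delta\to G$ with $F(\Delta)$ mapping onto a proper subgroup of $G$ yields a maximal subgroup $M<G$ containing that image. One would then use a minimal counterexample $G$, chosen so that its minimal normal subgroup is either elementary abelian, which is handled by the $\Fl$-cohomology, or nonabelian. The nonabelian case is where I expect genuine difficulty. If it cannot be excluded, the lemma may instead be intended to rely on an additional fact: that $F$ has nonzero degree on $H^2(\Delta,\Fl)\cong\Fl$, which follows from the cup-product pairing, and hence $F(\Delta)$ has finite index, by the characterization of closed infinite-index subgroups of surface groups as free profinite groups with vanishing $H^2$. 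If $D$ has finite index $d$, then $D$ is a surface group whose genus is fixed by $d$, and $D$ surjects onto an image of $\Delta$. Comparing ranks of $H^1(-,\Fl)$ gives $2g\ge 2(d(g-1)+1)$, which forces $d=1$. I would take this finite-index route first.
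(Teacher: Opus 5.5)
Your overall architecture matches the paper's: prove surjectivity, then use the Hopfian property of the topologically finitely generated group $\Delta$. Two of your observations are also correct: $F^*$ is the identity on $H^2_{\cts}(\Delta,\Fl)\cong\Fl$ via cup products, and the pro-$\ell$ Frattini argument alone only gives surjectivity onto nilpotent quotients.

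The gap is in the finite-index route you say you would take first. You assert that closed subgroups of infinite index in a profinite surface group are free with vanishing $H^2$. That is the pro-$\ell$ statement, and it is false for the full profinite group $\Delta$. For example, $R_p=\ker(\Delta\to\Delta^{(p)})$ has infinite index, yet for $\ell\neq p$ one has $H^2_{\cts}(R_p,\Fl)=\varinjlim_U H^2_{\cts}(U,\Fl)\cong\Fl$. The reason is that the open subgroups $U\supseteq R_p$ have $p$-power index, and restriction between them is multiplication by that index, which is invertible mod $\ell$. So nonvanishing of restriction on $H^2_{\cts}(-,\Fl)$ to a closed subgroup, for a single prime, does not force finite index. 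Your deduction $[\Delta:F(\Delta)]<\infty$ is therefore unsupported as written.

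The missing idea is to choose the prime according to the index; this is exactly where the all-primes hypothesis is needed. Suppose $F(\Delta)\neq\Delta$. Since $F(\Delta)$ is closed, it lies in a proper open subgroup $U$ of some index $m>1$; choose $\ell\mid m$. Under the fundamental-class identifications, $\operatorname{res}^\Delta_U:H^2_{\cts}(\Delta,\Fl)\to H^2_{\cts}(U,\Fl)$ is multiplication by $m$, hence zero. Since $F^*=F_U^*\circ\operatorname{res}^\Delta_U$, it follows that $F^*=0$ on the nonzero group $H^2_{\cts}(\Delta,\Fl)$, contradicting $F^*=\mathrm{id}$ there.

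This gives surjectivity directly, so your finite-index and Riemann--Hurwitz rank comparison becomes unnecessary, although that comparison is itself valid. The Hopfian step then finishes as you planned. This is precisely the paper's proof.
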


\begin{proof}
Let $C$ be the smooth proper geometric curve defining $\Delta$.
For every prime $\ell$, the standard comparison
\[
 H^\bullet_{\cts}(\Delta,\Fl)
 \cong H^\bullet_{\et}(C,\Fl)
\]
is compatible with cup products and pullback; see
\cite[\textcolor{black}{Prop.}~15]{Schmidt1996}.
By \cite[\textcolor{black}{Prop.}~14.2 and \textcolor{black}{Thm.}~14.7]{MilneEtale},
we have
\[
 H^2_{\cts}(\Delta,\Fl)\cong\Fl,
\]
and cup products of degree-one classes span this group.

Since $F^*$ is the identity on $H^1_{\cts}(\Delta,\Fl)$,
naturality of the cup product gives
\[
 F^*(\xi\smile\eta)=F^*\xi\smile F^*\eta=\xi\smile\eta
 \qquad
 \bigl(\xi,\eta\in H^1_{\cts}(\Delta,\Fl)\bigr).
\]
Thus $F^*$ is the identity on
$H^2_{\cts}(\Delta,\Fl)$ for every prime $\ell$.

Since $F$ is continuous and $\Delta$ is compact Hausdorff,
$F(\Delta)$ is closed. If it were proper, choose
$x\in\Delta\setminus F(\Delta)$. This complement is open,
and cosets of open normal subgroups form a neighborhood
basis in $\Delta$. Thus there is an open normal subgroup
$V\triangleleft\Delta$ such that
$xV\cap F(\Delta)=\varnothing$.
For the finite quotient $q:\Delta\to\Delta/V$, we have
$q(x)\notin q(F(\Delta))$, since equality $q(x)=q(h)$
with $h\in F(\Delta)$ would imply $h\in xV$.
Consequently,
\[
 U=q^{-1}\bigl(q(F(\Delta))\bigr)
\]
is a subgroup containing $F(\Delta)$. It is open because
$\Delta/V$ is discrete, and proper because $x\notin U$.
Put $m=[\Delta:U]>1$ and choose a prime $\ell\mid m$.
The inclusion $U\hookrightarrow\Delta$ corresponds to a
connected finite \'etale cover of degree $m$.
Under the fundamental-class identifications, restriction
\[
 \operatorname{res}^{\Delta}_{U}:
 H^2_{\cts}(\Delta,\Fl)
 \longrightarrow H^2_{\cts}(U,\Fl)
\]
is multiplication by $m$, hence zero.
This follows from the degree formula
\cite[\textcolor{black}{Rem.}~24.2(f)]{MilneEtale}, after choosing a
trivialization $\mu_\ell\cong\Fl$ over the algebraically
closed base field and using its pullback on the cover.

Since $F(\Delta)\subset U$, we may regard $F$ as a continuous
homomorphism
\[
 F_U:\Delta\longrightarrow U,\qquad \textcolor{black}{d}\longmapsto F(\textcolor{black}{d}).
\]
The resulting factorization
\[
 F:\Delta\xrightarrow{F_U}U\hookrightarrow\Delta
\]
gives
\[
 F^*=F_U^*\circ\operatorname{res}^{\Delta}_{U}=0
 \quad\text{on }H^2_{\cts}(\Delta,\Fl),
\]
contradicting the fact that $F^*$ is the identity on this
nonzero group. Hence $F$ is surjective.

Finally, $\Delta$ is topologically finitely generated, so
there are only finitely many open normal subgroups of index
at most $r$ for each $r\geq1$. Indeed, a normal subgroup of
index $m$ is the kernel of a continuous homomorphism
$\Delta\to S_m$, and such a homomorphism is determined by
the images of finitely many topological generators.

For every open normal subgroup $W$ of $\Delta$, surjectivity
of $F$ gives
$\Delta/F^{-1}(W)\cong\Delta/W$ and
$F(F^{-1}(W))=W$. Thus inverse image preserves indices
and is injective on the finite set of open normal
subgroups of index at most $r$, so it permutes that set.
Putting
\[
 \textcolor{black}{K_r}
 =\bigcap_{\substack{W\triangleleft_{\mathrm{open}}\Delta\\
                     [\Delta:W]\leq r}}W,
\]
we obtain
$F^{-1}(\textcolor{black}{K_r})=\textcolor{black}{K_r}$,
since inverse images commute with intersections.
Hence $\ker F\subseteq\textcolor{black}{K_r}$ for every $r$.

For any $x\ne1$, a finite quotient separates $x$ from the
identity. Its kernel is an open normal subgroup $W$ with
$x\notin W$. Taking $r=[\Delta:W]$ gives
$\textcolor{black}{K_r}\subseteq W$, so
$x\notin\textcolor{black}{K_r}$. Consequently,
\[
 \ker F\subseteq\bigcap_{r\geq1}\textcolor{black}{K_r}=\{1\}.
\]
Hence $F$ is bijective. Since $\Delta$ is compact Hausdorff,
its inverse is continuous, so $F$ is a topological automorphism.
\end{proof}

\subsection{Simultaneous motion and arithmetic rigidity}
\label{subsec:simultaneous-markings}

We pull a section back to the configuration space of a
fixed curve and extend it across the collision diagonals.
Restriction to the diagonal will then give the surjectivity
needed for Mochizuki's arithmetic comparison.

Choose a smooth proper geometrically connected genus-$g$
curve $C/k$; such curves exist by the hyperelliptic
construction in \cite[\textcolor{black}{Sec.}~2.8.1]{PoonenCurves}.
Put
\[
 V=\Conf_n(C)
   =\{(y_1,\ldots,y_n)\in C^n:
       y_i\ne y_j\text{ for }i\ne j\},
 \qquad
 \iota:V\hookrightarrow C^n.
\]
The constant family $\pr_2:C\times V\to V$ has the
pairwise disjoint sections
\[
 \textcolor{black}{s_{i,V}}:V\longrightarrow C\times V,\qquad
 (y_1,\ldots,y_n)\longmapsto
 (y_i,(y_1,\ldots,y_n)),
 \qquad 1\leq i\leq n.
\]
It therefore defines the classifying morphism
\[
 \varphi: V\longrightarrow \M,\qquad
 (y_1,\ldots,y_n)\longmapsto(C;y_1,\ldots,y_n),
\]
and a Cartesian diagram
\[
\begin{tikzcd}
 C\times V \arrow[r] \arrow[d,"\pr_2"'] &
 \cX \arrow[d,"\pi"] \\
 V \arrow[r, "\varphi"] & \M.
\end{tikzcd}
\]
Under this identification, the tautological sections
pull back to the sections $\textcolor{black}{s_{i,V}}$.

Choose compatible geometric basepoints. Transporting the
basepoints of the universal extension along geometric paths,
we identify $\Delta$ with $\textcolor{black}{\pi_1}(C_{\bar k})$.
Since $C$ is proper, the geometric product formula
\cite[Expos\'e~X, \textcolor{black}{Cor.}~1.7]{SGA1}, together with the
arithmetic exact sequences, gives
\[
 \textcolor{black}{\pi_1}(C\times V)
 \xrightarrow{\sim}
 \textcolor{black}{\pi_1}(C)\times_{G_k}\textcolor{black}{\pi_1}(V).
\]
The Cartesian square induces a homomorphism from this group
to $E\times_\Gamma\textcolor{black}{\pi_1}(V)$. It fits into the
commutative diagram with exact rows
\begin{equation}\label{eq:full-configuration-pullback}
\begin{tikzcd}[column sep=small]
1 \arrow[r] & \Delta \arrow[r] \arrow[d,equal] &
\textcolor{black}{\pi_1}(C)\times_{G_k}\textcolor{black}{\pi_1}(V)
  \arrow[r] \arrow[d,"\sim"] &
\textcolor{black}{\pi_1}(V) \arrow[r] \arrow[d,equal] & 1 \\
1 \arrow[r] & \Delta \arrow[r] &
E\times_\Gamma\textcolor{black}{\pi_1}(V) \arrow[r] &
\textcolor{black}{\pi_1}(V) \arrow[r] & 1.
\end{tikzcd}
\end{equation}
The maps on the kernel and quotient are identities, so the
middle map is an isomorphism by the short five lemma.

Fix a section $s:\Gamma\to E$. Its pullback $s_V$ fits into
the Cartesian diagram
\[
\begin{tikzcd}[column sep=large, row sep=large]
 \textcolor{black}{\pi_1}(C)\times_{G_k}\textcolor{black}{\pi_1}(V)
   \arrow[r]
   \arrow[d,"\operatorname{pr}_2"]
 &
 E \arrow[d]
 \\
 \textcolor{black}{\pi_1}(V)
   \arrow[r, "\varphi_\ast"]
   \arrow[u,bend left=35,"s_V"]
 &
 \Gamma \arrow[u,bend left=35,"s"]
\end{tikzcd}
\]
where the horizontal maps are induced by the classifying
morphism $V\to \M$. Since
$\operatorname{pr}_2\circ s_V=\mathrm{id}$, the section
$s_V$ is the graph
\[
 \textcolor{black}{\gamma_V}\longmapsto(f_V(\textcolor{black}{\gamma_V}),\textcolor{black}{\gamma_V})
\]
of the continuous homomorphism
\[
 f_V:=\operatorname{pr}_1\circ s_V:
 \textcolor{black}{\pi_1}(V)\longrightarrow\textcolor{black}{\pi_1}(C)
\]
over $G_k$.
For $s=s_i$, this homomorphism is induced by
the $i$th coordinate projection.

\begin{lemma}\label{lem:collision-extension}
The homomorphism $f_V$ factors uniquely through a continuous
homomorphism
\[
 f:\textcolor{black}{\pi_1}(C^n)\longrightarrow\textcolor{black}{\pi_1}(C)
\]
over $G_k$.
\end{lemma}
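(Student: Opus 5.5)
The plan is to prove this by purity, running the ramification argument from the proof of Proposition~\ref{prop:scheme-descent} at the generic points of the collision diagonals. The complement $C^n\setminus V$ is the union of the diagonals $D_{ab}=\{y_a=y_b\}$, $a<b$; these are smooth divisors on the smooth variety $C^n$. Put $F=k(C^n)=k(V)$, and let
\[
 q_V:G_F\twoheadrightarrow\pi_1(V),\qquad
 q_{C^n}:G_F\twoheadrightarrow\pi_1(C^n)
\]
be the surjections of \cite[Tag~0BQM]{Stacks}. The second factors through the first via $\iota_*$, so $\iota_*$ is surjective. This gives uniqueness of $f$ immediately. It also shows that $f$ is automatically a map over $G_k$, because $\iota_*$ is compatible with the projections to $G_k$.

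For existence, it suffices to show that $f_V\circ q_V$ kills $\ker q_{C^n}$. By purity \cite[Tag~0BMB]{Stacks}, $\ker q_{C^n}$ is the closed normal subgroup generated by the inertia groups $I_v$ at codimension-one points $v$ of $C^n$. If $v\in V$, then $I_v\subseteq\ker q_V$ by the same purity statement for $V$. It therefore remains to treat the generic points $v$ of the diagonals $D_{ab}$.

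Fix such a $v$, and put $R_v=\mathcal O_{C^n,v}^{\mathrm h}$ and $L=\operatorname{Frac}(R_v)$. Then $R_v$ is a henselian discrete valuation ring of characteristic zero, and its residue field $k(D_{ab})$ is finitely generated over $\Q$. Pull the section $s_V$ back along $G_L\hookrightarrow G_F\to\pi_1(V)$. By the identification \eqref{eq:full-configuration-pullback}, this gives a section $s_v$ of the extension for the constant curve $C_L/L$, namely $\gamma\mapsto(f_V(q_V(\gamma)),\gamma)$ in $\pi_1(C)\times_{G_k}G_L$. The curve $C\times\Spec R_v$ is smooth and proper over $R_v$, so \cite[Prop.~91]{StixSections} applies exactly as in Proposition~\ref{prop:scheme-descent}: tame ramification vanishes, and there is no wild inertia. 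Hence $(\mathrm{sp}\circ s_v)(I_v)=1$.

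Because the family is constant, I expect
\[
 \pi_1(C\times R_v)\cong\pi_1(C)\times_{G_k}\pi_1(\Spec R_v).
\]
Under this identification, specialization followed by $\mathrm{pr}_1$ should be simply the first projection of $\pi_1(C_L)=\pi_1(C)\times_{G_k}G_L$ to $\pi_1(C)$. Granting this, $f_V(q_V(I_v))=\mathrm{pr}_1(s_v(I_v))=1$.

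Since $\ker(f_V\circ q_V)$ is a closed normal subgroup containing every $I_v$, it contains $\ker q_{C^n}$. Thus $f_V\circ q_V$ factors as $f\circ q_{C^n}$. Continuity of $f$ holds because $q_{C^n}$ is a quotient map of profinite groups, and $f\circ\iota_*=f_V$ follows from the surjectivity of $q_V$.

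The step I expect to be the main obstacle is the compatibility in the constant-family step: checking that Stix's specialization map, composed with the projection to $\pi_1(C)$, really is $\mathrm{pr}_1$. This amounts to verifying the product decomposition of $\pi_1(C\times R_v)$ over $G_k$ and its compatibility with the chosen basepoints and with the decomposition group $G_L\subseteq G_F$. I would handle it by a commutative cube analogous to the one in the proof of Proposition~\ref{prop:scheme-descent}, with $\pi_1(C)$ in place of $\pi_1(Y)$.
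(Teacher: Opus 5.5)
Your argument is correct and is essentially the paper's proof. The paper applies Proposition~\ref{prop:scheme-descent} to both constant families, over $V$ and over $C^n$, which share the generic fiber: it restricts the section over $V$ to $k(V)=k(C^n)$ and descends it to $C^n$. You have instead unrolled that proposition's purity-plus-Stix argument, restricting attention to the generic points of the diagonals. The compatibility you flag as the main obstacle is exactly the commutative cube in that proposition's proof, and it also follows directly from functoriality of the projection $C\times\Spec R_v\to C$. Citing the proposition therefore disposes of it.
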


\begin{proof}
The constant families over $V$ and $C^n$ have the same
generic fiber. Applying Proposition~\ref{prop:scheme-descent}
to both bases extends the section over $V$ to one over
$C^n$, up to geometric conjugacy. Conjugating the extending
representative makes its restriction equal to the given
section. Under the arithmetic product identification for $C\times C^n$,
the extending section is given by
\[
 \textcolor{black}{b}\longmapsto(f(\textcolor{black}{b}),\textcolor{black}{b})
 \in\textcolor{black}{\pi_1}(C)\times_{G_k}\textcolor{black}{\pi_1}(C^n).
\]
The two components have the same image in $G_k$, so $f$
is over $G_k$. Its agreement with the given section over
$V$ gives the commutative diagram
\[
\begin{tikzcd}[column sep=large]
 \textcolor{black}{\pi_1}(V) \arrow[r,"\iota_*"] \arrow[dr,"f_V"'] &
 \textcolor{black}{\pi_1}(C^n) \arrow[d,"f"] \\
 & \textcolor{black}{\pi_1}(C)
\end{tikzcd}
\]

The map $\iota_*$ is surjective: a connected finite
\'etale cover of the normal integral variety $C^n$
is integral and remains connected over the dense open
subset $V$. This also proves uniqueness.
\end{proof}

Only the section of the constant complete-curve extension
is extended here; the classifying map $V\to \M$ need not
extend across the collision diagonals.

Choose a geometric point $\bar x\in C(\bar k)$ and use
the basepoints $\bar x$ and $(\bar x,\ldots,\bar x)$,
transporting the preceding maps along a geometric path
in $C^n$ and its coordinate paths. For the diagonal
morphism
\[
 \delta:C\longrightarrow C^n,\qquad
 y\longmapsto(y,\ldots,y),
\]
these choices give
\[
 (\pr_i)_*\circ\delta_*
 =\operatorname{id}_{\textcolor{black}{\pi_1}(C)}
 \qquad(1\leq i\leq n).
\]

For each prime $\ell$, we retain the notation $N$ and
$R_\ell$ of Subsection~\ref{subsec:geometric-kernels},
with the dependence of $N$ on $\ell$ understood.
Following \cite[Expos\'e~XIII, \S~4.0]{SGA1}, write
$\textcolor{black}{\pi_1}(Y)'$ for the geometrically pro-$\ell$ quotient
of the arithmetic fundamental group of a geometrically
connected $k$-variety $Y$. Thus we quotient by the kernel of
\[
 \textcolor{black}{\pi_1}(Y_{\bar k})
 \longrightarrow\textcolor{black}{\pi_1}(Y_{\bar k})^{(\ell)}
\]
and retain the full quotient $G_k$. The prime notation
always refers to the chosen prime $\ell$.
Under the geometric product identification
$\textcolor{black}{\pi_1}(C_{\bar k}^{\,n})\cong\Delta^n$, we have
\[
 \textcolor{black}{\pi_1}(C)'=\textcolor{black}{\pi_1}(C)/R_\ell,\qquad
 \textcolor{black}{\pi_1}(C^n)'=\textcolor{black}{\pi_1}(C^n)/R_\ell^n.
\]
The corresponding arithmetic exact sequences are
\[
\begin{aligned}
 1&\longrightarrow N
 \longrightarrow\textcolor{black}{\pi_1}(C)'
 \longrightarrow G_k\longrightarrow1,\\
 1&\longrightarrow N^n
 \longrightarrow\textcolor{black}{\pi_1}(C^n)'
 \longrightarrow G_k\longrightarrow1.
\end{aligned}
\]

A subscript $\ell$ on an arithmetic homomorphism denotes
its induced homomorphism on these quotients.
Write $f_\ell$, $\iota_{*,\ell}$, and $(\pr_i)_{*,\ell}$
for the homomorphisms induced by $f$, $\iota_*$, and
$(\pr_i)_*$, respectively. The quotient maps are compatible
with these homomorphisms:
\[
\begin{tikzcd}[column sep=large, row sep=large]
 \textcolor{black}{\pi_1}(V)
   \arrow[r,"\iota_*"] \arrow[d,two heads] &
 \textcolor{black}{\pi_1}(C^n)
   \arrow[r,shift left=0.8ex,"f"]
   \arrow[r,shift right=0.8ex,"(\pr_i)_*"']
   \arrow[d,two heads] &
 \textcolor{black}{\pi_1}(C) \arrow[d,two heads] \\
 \textcolor{black}{\pi_1}(V)'
   \arrow[r,"\iota_{*,\ell}"'] &
 \textcolor{black}{\pi_1}(C^n)'
   \arrow[r,shift left=0.8ex,"f_\ell"]
   \arrow[r,shift right=0.8ex,"(\pr_i)_{*,\ell}"'] &
 \textcolor{black}{\pi_1}(C)'.
\end{tikzcd}
\]
Here $f\circ\iota_*=f_V$, while
$(\pr_i)_*\circ\iota_*$ is induced by the $i$th coordinate
map on $V$.

\begin{lemma}\label{lem:diagonal-auto}
For each prime $\ell$, there are an index $j(\ell)$ and
an element $\textcolor{black}{v_\ell}\in N$ such that
\begin{equation}\label{eq:coordinate-proell}
 f_\ell(\textcolor{black}{b_\ell})
 =\textcolor{black}{v_\ell}^{-1}(\pr_{j(\ell)})_{*,\ell}(\textcolor{black}{b_\ell})\textcolor{black}{v_\ell}
 \qquad\bigl(\textcolor{black}{b_\ell}\in\textcolor{black}{\pi_1}(C^n)'\bigr).
\end{equation}
The homomorphism
\[
 h=f\circ\delta_*:
 \textcolor{black}{\pi_1}(C)\longrightarrow\textcolor{black}{\pi_1}(C)
\]
is an automorphism. In particular, $f$ is surjective.
\end{lemma}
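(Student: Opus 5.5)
The plan has two parts. First, classify the pro-$\ell$ shadow of $f$ using the geometrically pro-$\ell$ classification over $\M$. Second, feed the resulting information on $\ell$-cohomology, for all primes, into Lemma~\ref{lem:all-prime-auto}.

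\textbf{Step 1: the pro-$\ell$ identity.} Fix $\ell$ and push $s$ to $E_\ell$. Propositions~\ref{prop:unipotent-section-classes} and~\ref{prop:singleton} show that the image of $s$ in $E_\ell$ is $N$-conjugate to some $s_{j(\ell)}$. So there is $v_\ell\in N$ with $s(\gamma)\equiv v_\ell^{-1}s_{j(\ell)}(\gamma)v_\ell$ modulo $R_\ell$ for all $\gamma\in\Gamma$.

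Pull this back along $\varphi_*$ and use the geometrically pro-$\ell$ quotient of the isomorphism \eqref{eq:full-configuration-pullback}. The section $s_V$ is the graph of $f_V$, and $s_{j,V}$ is the graph of the $j$th coordinate map. Comparing first components in $\pi_1(C)'$ gives, after reduction,
\[
 f_{V}(b)\equiv v_\ell^{-1}(\pr_{j(\ell)})_*\iota_*(b)\,v_\ell \pmod{R_\ell}
 \qquad\bigl(b\in\pi_1(V)\bigr).
\]
In terms of $f_\ell$ and $(\pr_{j(\ell)})_{*,\ell}$, this says that the two sides of \eqref{eq:coordinate-proell} agree after precomposition with $\iota_{*,\ell}$. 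Surjectivity of $\iota_*$, proved in Lemma~\ref{lem:collision-extension}, passes to the quotients. This yields \eqref{eq:coordinate-proell} on all of $\pi_1(C^n)'$.

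\textbf{Step 2: the automorphism.} Restrict $h=f\circ\delta_*$ to the geometric kernel. This gives a continuous endomorphism $h|_\Delta$ of $\Delta$; it lies over $G_k$, so it preserves $\Delta$. Precompose \eqref{eq:coordinate-proell} with $\delta_{*,\ell}$ and use $(\pr_i)_*\circ\delta_*=\mathrm{id}$. Then $h|_\Delta$ modulo $R_\ell$ equals $\Ad_{v_\ell^{-1}}$ on $N$.

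Every continuous homomorphism $\Delta\to\Fl$ factors through the maximal pro-$\ell$ quotient $N$. Inner automorphisms act trivially on $H^1(N,\Fl)$. Hence $(h|_\Delta)^*$ is the identity on $H^1_{\cts}(\Delta,\Fl)$ for every prime $\ell$. Lemma~\ref{lem:all-prime-auto} then shows that $h|_\Delta$ is an automorphism of $\Delta$. Since $h$ also induces the identity on $G_k$, the five lemma on the arithmetic exact sequence of $C$ shows that $h$ is an automorphism of $\pi_1(C)$. Finally, $f\circ\delta_*$ is surjective, so $f$ is surjective.

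\textbf{Expected obstacle.} The main obstacle is Step 1, specifically basepoint bookkeeping. The conjugator $v_\ell$ lives in the $N$ attached to the universal basepoint, while $f_\ell$ uses basepoints transported to $(\bar x,\ldots,\bar x)$ along chosen paths. One must check that this transport only changes $v_\ell$ by an element of $N$ and changes the maps by a common conjugation, so the form of \eqref{eq:coordinate-proell} is preserved. This is harmless but needs care. The cohomological argument in Step 2 is insensitive to these conjugations.
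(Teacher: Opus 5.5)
Your proposal is correct and follows the paper's proof essentially step for step. The paper also takes the pro-$\ell$ conjugacy from Propositions~\ref{prop:unipotent-section-classes} and~\ref{prop:singleton}, pulls it back to $V$ through the pro-$\ell$ quotient of \eqref{eq:full-configuration-pullback}, and extends it to $\pi_1(C^n)'$ by surjectivity of $\iota_{*,\ell}$. Restricting to the diagonal then shows that $h|_\Delta$ acts trivially on $H^1_{\cts}(\Delta,\Fl)$ for every prime $\ell$. Lemma~\ref{lem:all-prime-auto} and the five lemma finish the argument. The basepoint bookkeeping you flag is absorbed in the paper by fixing the transported basepoints in the setup, so it needs no separate argument.
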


\begin{proof}
For each prime $\ell$,
Propositions~\ref{prop:unipotent-section-classes}
and~\ref{prop:singleton} provide an index
\textcolor{black}{$j(\ell)$ and an element $\textcolor{black}{v_\ell}\in N$ such that the
images of the sections in $E_\ell$ satisfy}
\[
 s(\gamma)
 =\textcolor{black}{v_\ell}^{-1}s_{j(\ell)}(\gamma)\textcolor{black}{v_\ell}
 \qquad(\gamma\in\Gamma).
\]
Their pullbacks to $V$ remain conjugate by the same element.

Quotienting the middle vertical isomorphism in
\eqref{eq:full-configuration-pullback} by
$R_\ell\subset\Delta$, embedded in the common geometric
kernel, gives
\[
 E_\ell\times_\Gamma\textcolor{black}{\pi_1}(V)
 \cong
 \textcolor{black}{\pi_1}(C)'\times_{G_k}\textcolor{black}{\pi_1}(V).
\]
The pulled-back sections are graphs, and the kernel is
embedded as $N\times\{1\}$. Conjugation by $(\textcolor{black}{v_\ell},1)$
therefore conjugates their first components by $\textcolor{black}{v_\ell}$
and fixes the second components. Under the product identification, the pullbacks of
$s_{j(\ell)}$ and $s$ are given respectively by
\[
 \textcolor{black}{\gamma_V}\longmapsto
 \bigl((\pr_{j(\ell)})_*(\iota_*(\textcolor{black}{\gamma_V})),\textcolor{black}{\gamma_V}\bigr),
 \qquad
 \textcolor{black}{\gamma_V}\longmapsto
 \bigl(f(\iota_*(\textcolor{black}{\gamma_V})),\textcolor{black}{\gamma_V}\bigr),
\]
where $\textcolor{black}{\gamma_V}\in\textcolor{black}{\pi_1}(V)$ and we have used
$f_V=f\circ\iota_*$. Their conjugacy in
the geometrically pro-$\ell$ quotient therefore gives
\[
 f_\ell\bigl(\iota_{*,\ell}(\textcolor{black}{\gamma_{V,\ell}})\bigr)
 =
 \textcolor{black}{v_\ell}^{-1}(\pr_{j(\ell)})_{*,\ell}
 \bigl(\iota_{*,\ell}(\textcolor{black}{\gamma_{V,\ell}})\bigr)\textcolor{black}{v_\ell}
 \qquad\bigl(\textcolor{black}{\gamma_{V,\ell}}\in\textcolor{black}{\pi_1}(V)'\bigr).
\]
Surjectivity of $\iota_*$ implies that of $\iota_{*,\ell}$,
which proves \eqref{eq:coordinate-proell}.

Since every coordinate projection restricts to the identity
on the diagonal, this equation gives
\[
 h_\ell(t)=\textcolor{black}{v_\ell}^{-1}t\textcolor{black}{v_\ell}
 \qquad\bigl(t\in\textcolor{black}{\pi_1}(C)'\bigr).
\]
In particular, $h_\ell|_N$ is conjugation by $\textcolor{black}{v_\ell}\in N$,
so it acts trivially on
$H^1_{\cts}(N,\Fl)=\Hom_{\cts}(N,(\Fl,+))$.
Every continuous homomorphism $\Delta\to\Fl$ factors
uniquely through the maximal pro-$\ell$ quotient $N$, giving
\[
 H^1_{\cts}(N,\Fl)\xrightarrow{\sim}
 H^1_{\cts}(\Delta,\Fl).
\]
Since $F=h|_\Delta$ induces $h_\ell|_N$ on $N$, their
pullback actions correspond under the isomorphism above.
As $(h_\ell|_N)^*$ is the identity on $H^1_{\cts}(N,\Fl)$,
so is $F^*$ on $H^1_{\cts}(\Delta,\Fl)$ for every prime $\ell$. Lemma~\ref{lem:all-prime-auto} shows that $F$
is an automorphism.

Finally, the diagram
\[
\begin{tikzcd}[column sep=small]
1 \arrow[r] & \Delta \arrow[r] \arrow[d,"F"'] &
\textcolor{black}{\pi_1}(C) \arrow[r] \arrow[d,"h"] &
G_k \arrow[r] \arrow[d,equal] & 1 \\
1 \arrow[r] & \Delta \arrow[r] &
\textcolor{black}{\pi_1}(C) \arrow[r] &
G_k \arrow[r] & 1
\end{tikzcd}
\]
and the short five lemma show that $h$ is bijective.
Its inverse is continuous because these groups are
profinite. Hence $h$ is an automorphism, and
$h=f\circ\delta_*$ implies that $f$ is surjective.
\end{proof}

The preceding lemma shows that $f$ is surjective, hence
open. We now compare it with a coordinate projection
using Mochizuki's open Hom theorem for a single prime
\cite[\textcolor{black}{Thm.}~A]{MochizukiLocal}.
Recall that a field is sub-$\ell$-adic if it embeds in
a finitely generated extension of $\Ql$.

\begin{theorem*}[Mochizuki]
Let $\ell$ be a prime and let $k$ be a sub-$\ell$-adic
field. Let $Y$ be a smooth geometrically connected
$k$-variety and let $Z$ be a hyperbolic curve over $k$.
The natural maps
\[
\begin{tikzcd}[row sep=small]
 \Hom_k^{\mathrm{dom}}(Y,Z)
   \arrow[d,"\sim"] \\
 \Hom_{G_k}^{\mathrm{open}}
   \bigl(\textcolor{black}{\pi_1}(Y),\textcolor{black}{\pi_1}(Z)\bigr)
   /\textcolor{black}{\pi_1}(Z_{\bar k})
   \arrow[d,"\sim"] \\
 \Hom_{G_k}^{\mathrm{open}}
   \bigl(\textcolor{black}{\pi_1}(Y)',\textcolor{black}{\pi_1}(Z)'\bigr)
   /\textcolor{black}{\pi_1}(Z_{\bar k})^{(\ell)}
\end{tikzcd}
\]
are bijections. Here $\Hom_k^{\mathrm{dom}}$ denotes
dominant $k$-morphisms, $\Hom_{G_k}^{\mathrm{open}}$
denotes continuous open homomorphisms over $G_k$, and
the quotients are taken under conjugation by the
indicated target geometric groups.
\end{theorem*}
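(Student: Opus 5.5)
The plan is not to reprove this from scratch. The argument reduces to the Hodge--Tate comparison machinery of \cite[Thm.~A]{MochizukiLocal}, and we recall its structure only to the extent needed to see that both displayed maps are bijections. Write $\alpha$ for the upper map, $\beta$ for the lower map, and $\beta\circ\alpha$ for their composite. Both maps are well defined. A dominant morphism $Y\to Z$ of a smooth variety to a curve induces an open homomorphism on arithmetic fundamental groups, since its geometric image has finite index. Reduction modulo the kernels of the maximal pro-$\ell$ geometric quotients preserves openness and is compatible with the conjugation actions.

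The core input is that $\beta\circ\alpha$ is a bijection. This is the pro-$\ell$ case of Mochizuki's theorem, applied with the set of primes $\{\ell\}$ for a sub-$\ell$-adic base field. We would recall its proof in three steps.

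\begin{enumerate}
\item Reduce to the case where $Y$ is a curve. Restrict to sufficiently general curves in $Y$ and use that a morphism is determined by its restrictions to a covering family of curves. Openness of the homomorphism ensures that the restricted homomorphisms remain open.
\item For curves, embed $k$ into a finitely generated extension of $\Ql$. Pass to $\ell$-adic completions and apply Mochizuki's $p$-adic Hodge-theoretic argument. An open pro-$\ell$ homomorphism over $G_k$ preserves the Hodge--Tate filtration on $H^1$. It therefore induces a map of Jacobians that respects the Abel--Jacobi images of points. This yields a dominant morphism realizing the given homomorphism.
\item Injectivity follows because a dominant morphism to a hyperbolic curve is recovered from its action on the geometric pro-$\ell$ fundamental group. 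For this step one uses the slimness and centerlessness of pro-$\ell$ surface groups, the pro-$\ell$ analogue of Lemma~\ref{lem:profinite-center}.
\end{enumerate}

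Granting that $\beta\circ\alpha$ is bijective, $\alpha$ is injective and $\beta$ is surjective. It remains to show that $\alpha$ is surjective, which then forces $\beta$ to be injective. Let $\psi$ be an open profinite homomorphism over $G_k$. Its pro-$\ell$ reduction comes from a unique dominant morphism $\phi$, so $\psi$ and $\phi_*$ agree in the pro-$\ell$ quotient up to geometric conjugacy. The same conclusion holds for every prime $\ell'$, because $k$ is sub-$\ell'$-adic for all $\ell'$ when $k$ is finitely generated over $\Q$. By injectivity of the composite for each prime, the morphism $\phi$ is independent of $\ell'$.

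Passing from agreement in every pro-$\ell'$ quotient to agreement in the full profinite group up to conjugacy is the main obstacle. Termwise information prime by prime does not directly control profinite conjugacy. Here we would invoke Mochizuki's theorem for an arbitrary set of primes $\Sigma$ containing $\ell$, taking $\Sigma$ to be all primes. His Theorem~A is stated for maximal pro-$\Sigma$ geometric quotients, with $\Sigma\ni\ell$. The injectivity part of that statement, applied with the full profinite group and the morphism $\phi$ just obtained, identifies $\psi$ with $\phi_*$ modulo $\pi_1(Z_{\bar k})$. This gives surjectivity of $\alpha$, and hence bijectivity of $\beta$.
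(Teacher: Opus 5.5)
The paper does not prove this statement. It quotes it from \cite[Thm.~A]{MochizukiLocal}, reading both the profinite and the geometrically pro-$\ell$ bijections off that theorem, and uses it only in Proposition~\ref{prop:coordinate-full}. Your proposal is in substance the same citation (Theorem~A for $\Sigma=\{\ell\}$ and for $\Sigma$ the set of all primes), but the deduction you build around it has a wrong step.

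In your last paragraph you invoke the \emph{injectivity} part of the all-primes statement to identify $\psi$ with $\phi_*$. Injectivity of $\alpha$ only says that distinct dominant morphisms have distinct classes. It says nothing about an open $\psi$ that is not yet known to lie in the image of $\alpha$, and that is exactly the surjectivity you are trying to prove. What you need is the \emph{surjectivity} half of the profinite case: $\psi$ is geometrically conjugate to $\phi'_*$ for some dominant $\phi'$, and then injectivity of $\beta\circ\alpha$ forces $\phi'=\phi$. Once the all-primes case is granted, $\alpha$ is bijective outright, and your formal two-out-of-three argument gives bijectivity of $\beta$.

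The prime-by-prime paragraph is then superfluous. It is also not valid in the generality of the statement. You justify it only for $k$ finitely generated over $\Q$, but a sub-$\ell$-adic field need not be sub-$\ell'$-adic for $\ell'\neq\ell$. For example, the rational number $1+\ell$ has $\ell'$-power roots of every order in $\Q_\ell$, but not in any finitely generated extension of $\Q_{\ell'}$, so $\Q_\ell$ is not sub-$\ell'$-adic.

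Your steps (1)--(3) also should not be presented as the structure of Mochizuki's proof. Over a sub-$\ell$-adic field there is no Tate-type theorem turning a Galois-equivariant map of Tate modules into a homomorphism of Jacobians; this already fails over finite extensions of $\Ql$. Mochizuki instead constructs the morphism using Faltings's $p$-adic Hodge theory (Hodge--Tate decompositions) at $p=\ell$. Likewise, slimness concerns centralizers of open subgroups and does not by itself separate distinct dominant morphisms.

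Neither the paper nor your deduction actually uses these steps. The clean version is to cite Theorem~A for geometrically pro-$\Sigma$ quotients with $\Sigma=\{\ell\}$ and with $\Sigma$ the set of all primes. Then deduce bijectivity of $\beta$ formally from that of $\alpha$ and $\beta\circ\alpha$.
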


In particular, two continuous open arithmetic homomorphisms
of the full fundamental groups are geometrically conjugate
if their geometrically pro-$\ell$ quotients are geometrically
conjugate. We apply this comparison to $C^n$ and $C$.

\begin{proposition}\label{prop:coordinate-full}
There are an index $j$ and an element $\textcolor{black}{w}\in\Delta$ such that
\begin{equation}\label{eq:coordinate-full}
 f(b)=\textcolor{black}{w}^{-1}(\pr_j)_*(b)\textcolor{black}{w}
 \qquad\bigl(b\in\textcolor{black}{\pi_1}(C^n)\bigr).
\end{equation}
Moreover, $j(\ell)=j$ for every prime $\ell$.
\end{proposition}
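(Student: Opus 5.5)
Fix one prime $\ell$ and apply Mochizuki's theorem with $Y=C^n$ and $Z=C$. The field $k$ is finitely generated over $\Q$, so it embeds in a finitely generated extension of $\Ql$ and is therefore sub-$\ell$-adic. The variety $C^n$ is smooth and geometrically connected, and $C$ is a proper hyperbolic curve since $g\geq3$. Both $f$ and $(\pr_{j(\ell)})_*$ are continuous homomorphisms over $G_k$. Each is surjective: for $f$ this is Lemma~\ref{lem:diagonal-auto}, and for the projection it follows from $(\pr_i)_*\circ\delta_*=\operatorname{id}$. A surjective homomorphism of profinite groups is open, so both lie in $\Hom_{G_k}^{\mathrm{open}}(\pi_1(C^n),\pi_1(C))$.

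Next I would check that their images in the pro-$\ell$ Hom set agree. The induced maps on geometrically pro-$\ell$ quotients are $f_\ell$ and $(\pr_{j(\ell)})_{*,\ell}$. By \eqref{eq:coordinate-proell} these are conjugate by $v_\ell\in N=\pi_1(C_{\bar k})^{(\ell)}$. The open-Hom condition on the quotients holds because surjectivity passes to them. Mochizuki's second bijection is injective, so $f$ and $(\pr_{j(\ell)})_*$ are conjugate by some $w\in\Delta=\pi_1(C_{\bar k})$. Setting $j=j(\ell)$ for this fixed $\ell$ gives \eqref{eq:coordinate-full}.

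For the independence of $\ell$, take any other prime $\ell'$. Reducing \eqref{eq:coordinate-full} modulo $R_{\ell'}^n$ and $R_{\ell'}$ shows that $f_{\ell'}$ is conjugate to $(\pr_j)_{*,\ell'}$. Comparing with \eqref{eq:coordinate-proell} at $\ell'$ shows that $(\pr_j)_{*,\ell'}$ and $(\pr_{j(\ell')})_{*,\ell'}$ are conjugate by an element of $\Delta^{(\ell')}$. Suppose $j\neq j(\ell')$. Restrict both maps to the geometric factor $N'^{\,n}$, where $N'=\Delta^{(\ell')}$. The projection onto the $j$th factor kills the $j(\ell')$th copy of $N'$, while the projection onto the $j(\ell')$th factor maps that copy isomorphically onto $N'$. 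Conjugation cannot turn a map with nontrivial image on that copy into one with trivial image, and $N'\neq1$. This contradiction forces $j(\ell')=j$.

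The step I expect to need the most care is matching the hypotheses of Mochizuki's theorem. This means checking openness on both the full and the pro-$\ell$ quotients, and confirming that the basepoint and path conventions make $(\pr_j)_*$ the homomorphism induced by the morphism $\pr_j$. The conjugacy ambiguity from changing paths is absorbed into $w$, so this should not obstruct the argument. Note that only injectivity of the comparison map is used, not the identification with dominant morphisms.
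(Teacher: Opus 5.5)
Your proposal is correct and follows the paper's proof. You fix one prime, obtain openness of $f$ from Lemma~\ref{lem:diagonal-auto} and of $(\pr_j)_*$ from the right inverse $\delta_*$, and use injectivity of Mochizuki's pro-$\ell$ comparison to upgrade \eqref{eq:coordinate-proell} to a conjugacy by some $w\in\Delta$. The only difference is in the independence of $\ell$: the paper separates distinct coordinate projections on the geometric abelianization $(N^{\mathrm{ab}})^{\oplus n}\to N^{\mathrm{ab}}$, whereas you use the fact that only one of them kills the $j(\ell')$th factor of $N'^{\,n}$. Both arguments work equally well.
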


\begin{proof}
Fix a prime $\ell_0$ and put $j=j(\ell_0)$.
By Lemma~\ref{lem:diagonal-auto}, $f$ is surjective.
The homomorphism $(\pr_j)_*$ is also surjective because
it has the right inverse $\delta_*$. Both are therefore
continuous open homomorphisms over $G_k$.

The field $k$ is sub-$\ell_0$-adic. Indeed, an embedding
of a transcendence-basis field for $k/\Q$ into the
corresponding rational function field over $\Q_{\ell_0}$
extends to an embedding of $k$ into a finite extension
of that field. Thus $k$ embeds in a finitely generated
extension of $\Q_{\ell_0}$, as required by
\cite[\textcolor{black}{Def.}~15.4(i)]{MochizukiLocal}.
Moreover, $C^n$ is smooth and geometrically connected,
and $C$ is hyperbolic because $g\geq3$.

\textcolor{black}{Mochizuki's theorem therefore applies. Equation
\eqref{eq:coordinate-proell} identifies the geometrically
pro-$\ell_0$ conjugacy classes of $f$ and $(\pr_j)_*$.
The second bijection in the theorem then gives a conjugator}
$\textcolor{black}{w}\in\Delta$, proving \eqref{eq:coordinate-full}.

For any other prime $\ell$, equations
\eqref{eq:coordinate-proell} and \eqref{eq:coordinate-full}
show that $(\pr_j)_{*,\ell}$ and
$(\pr_{j(\ell)})_{*,\ell}$ are conjugate by an element
of the geometric kernel $N$.
On geometric abelianizations, these maps induce the
corresponding coordinate projections
\[
 (N^{\mathrm{ab}})^{\oplus n}
 \longrightarrow N^{\mathrm{ab}},
 \qquad
 N^{\mathrm{ab}}=N/\overline{[N,N]}
 \cong\Zl^{2g}.
\]
Distinct coordinates give distinct projections, whereas
inner conjugation acts trivially on abelianization.
Hence $j(\ell)=j$.
\end{proof}

All primes were used to establish surjectivity of $f$;
one prime then sufficed for the arithmetic comparison.

\subsection{The configuration subgroup}
\label{subsec:full-section-classes}
\textcolor{black}{We identify the image of the geometric
configuration group as a normal subgroup of $\Gamma$
and describe its action on $\Delta$.}

\textcolor{black}{Recall the configuration family $C\times_k V\to V$
and its sections $s_{j,V}$ from
Subsection~\ref{subsec:simultaneous-markings}.}
Let $c:\Spec k\to\cM_{g/k}$ classify $C/k$. The square
\begin{equation}\label{eq:configuration-fiber-square}
\begin{tikzcd}[column sep=large]
 V \arrow[r,"\varphi"] \arrow[d] &
 \M=\cM_{g,n/k} \arrow[d,"\mathrm{forg}"] \\
 \Spec k \arrow[r,"c"'] &
 \cM_{g/k}
\end{tikzcd}
\end{equation}
is Cartesian as a diagram of stacks.
To see this, let $T$ be a $k$-scheme and put
$C_T=C\times_kT$. An object of the fiber product over
$T$ consists of a marked curve $(Y/T;x_1,\ldots,x_n)$
and an isomorphism $\textcolor{black}{\psi}:Y\xrightarrow{\sim}C_T$.
Transporting the markings by $\textcolor{black}{\psi}$ gives $n$
pairwise disjoint sections of $C_T\to T$.
Their projections to $C$ define a morphism $T\to V$.
Conversely, a morphism $T\to V$ gives such markings
on $C_T$, with $\textcolor{black}{\psi}=\mathrm{id}_{C_T}$.
These constructions identify the fiber product with
$V$, and its projection to $\M$ with $\varphi$.

Consequently, $V_{\bar k}$ is the geometric fiber of
the forgetful map over the point defined by $C_{\bar k}$,
and its structural map to $\M$ is
$V_{\bar k}\to V\xrightarrow{\varphi}\M$.

Return to the geometric basepoint of $V$ and its image
in $\M$, and put
\[
 D=\operatorname{im}\bigl(
   \textcolor{black}{\pi_1}(V_{\bar k})\longrightarrow\Gamma
 \bigr).
\]
We use the notation $\alpha_j$ also for the conjugation
action on $\Delta=\ker(E\to\Gamma)$:
\[
 \alpha_j(\gamma)(d)=s_j(\gamma)d\,s_j(\gamma)^{-1}
 \qquad(\gamma\in\Gamma,\ d\in\Delta).
\]
It induces the actions used in Section~\ref{sec:moving}.
\begin{lemma}\label{lem:configuration-kernel}
The subgroup $D$ is the kernel of the forgetful homomorphism
\begin{equation}\label{eq:configuration-kernel}
 \Gamma\longrightarrow\textcolor{black}{\pi_1}(\cM_{g/k}).
\end{equation}
In particular, it is closed and normal.
Moreover, $\alpha_j(D)$ contains every inner automorphism
of $\Delta$.
\end{lemma}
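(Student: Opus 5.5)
The plan has three steps: identify $D$ with the kernel via the homotopy sequence of the forgetful morphism, deduce closedness and normality, and then realize inner automorphisms by moving one marking inside the fiber.

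\emph{Step 1: $D$ is the kernel.} Apply the homotopy exact sequence to the forgetful morphism $\mathrm{forg}:\M\to\cM_{g/k}$. This morphism is representable, smooth, and of finite type, with geometrically connected fibers. The Cartesian square \eqref{eq:configuration-fiber-square} identifies its geometric fiber over $C_{\bar k}$ with $V_{\bar k}$. By the same reference used in Lemma~\ref{lem:universal-homotopy} \cite[Secs.~A.1 and~A.4]{NoohiFundamental}, the sequence
\[
 \pi_1(V_{\bar k})\longrightarrow\Gamma\longrightarrow\pi_1(\cM_{g/k})\longrightarrow1
\]
is exact at $\Gamma$. Hence $D$ equals the kernel of \eqref{eq:configuration-kernel}.

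The main obstacle is that forg is not proper, so exactness at the middle term is not automatic. For the non-proper case I would factor forg as a sequence of forgetting one point at a time, $\cM_{g,m+1}\to\cM_{g,m}$. Each step is the complement of $m$ disjoint sections in the universal proper curve, so it is a hyperbolic curve fibration in characteristic zero. For such fibrations the homotopy sequence is exact in the middle by the standard argument for tame divisors: the geometric fiber of each step is a punctured curve, and characteristic zero makes all covers tame.

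If that route is awkward, I would instead pass to the level-$3$ cover and use Hain's exactness for $\cM_{g,n}[3]\to\cM_g[3]$. Closedness of $D$ is automatic either way, since $\pi_1(V_{\bar k})$ is compact and $D$ is a continuous image of it. Normality then follows because $D$ is a kernel.

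\emph{Step 2: inner automorphisms.} Fix $d\in\Delta$ and use the moving-mark family of Section~\ref{sec:moving}, taking the base curve to be $C_{\bar k}$ with its markings $x_i$. The map $U_j\to V_{\bar k}$, $y\mapsto(x_1,\ldots,y,\ldots,x_n)$, composed with $\varphi$, is the classifying map $i_j$. Hence $\phi_j(\Gamma_j)\subseteq D$.

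Repeat the proof of Lemma~\ref{lem:moving-extension} for the full profinite kernel, using the product isomorphism $\psi_j$ constructed there. The pullback of $s_j$ is $\beta\mapsto(\lambda_j(\beta),\beta)$ in $\Delta\times\Gamma_j$. Conjugating the kernel $\Delta\times\{1\}$ gives
\[
 \alpha_j(\phi_j(\beta))=\Ad_{\lambda_j(\beta)}\quad\text{on }\Delta,
\]
exactly as in Lemma~\ref{lem:inner-action}. Since $\lambda_j$ is surjective (Subsection~\ref{subsec:moving-family}), we can choose $\beta$ with $\lambda_j(\beta)=d$. Then $\alpha_j(\phi_j(\beta))=\Ad_d$, and $\phi_j(\beta)\in D$, so $\alpha_j(D)$ contains every inner automorphism of $\Delta$.

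Throughout, basepoints are handled by transporting along geometric paths. Changing the chosen representative $s_j$ by $\Delta$-conjugacy only composes $\alpha_j$ with an inner automorphism, so the conclusion does not depend on these choices.
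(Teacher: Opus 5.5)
Your proposal is correct and takes essentially the paper's route. For Step 1, the paper cites Hain's profinite Birman exact sequence, which is your second fallback; as you note yourself, the Noohi reference alone would not cover the non-proper forgetful map. Step 2 is the same moving-one-marking computation: the paper phrases it through the surjection $(\pr_j)_*:\pi_1(V_{\bar k})\to\Delta$, obtained from the surjectivity of $r_j$, rather than through $\Gamma_j$ and $\lambda_j$.
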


\begin{proof}
Using \eqref{eq:configuration-fiber-square}, the
profinite Birman exact sequence takes the form
\[
 1\longrightarrow
 \textcolor{black}{\pi_1}(V_{\bar k})
 \longrightarrow
 \Gamma
 \longrightarrow
 \textcolor{black}{\pi_1}(\cM_{g/k})
 \longrightarrow1;
\]
see \cite[\textcolor{black}{Sec.}~8.3, first diagram and its accompanying
footnote]{HainRational}.
With our compatible basepoints, the first map is
\[
 \textcolor{black}{\pi_1}(V_{\bar k})
 \longrightarrow\textcolor{black}{\pi_1}(V)
 \xrightarrow{\varphi_*}\Gamma.
\]
Its image is $D$, so exactness identifies $D$ with
the kernel of the forgetful homomorphism.
In particular, $D$ is closed and normal.

Choose distinct points $x_1,\ldots,x_n\in C(\bar k)$.
With compatible basepoints, varying only the $j$th
marking gives the commutative diagram
\[
\begin{tikzcd}[column sep=large]
 \textcolor{black}{\pi_1}\bigl(C_{\bar k}\setminus\{x_i:i\ne j\}\bigr)
 \arrow[r] \arrow[dr,two heads,"r_j"'] &
 \textcolor{black}{\pi_1}(V_{\bar k})
 \arrow[d,"(\pr_j)_*"] \\
 & \Delta.
\end{tikzcd}
\]
The map $r_j$ is induced by the open inclusion into
$C_{\bar k}$ and is surjective. Hence $(\pr_j)_*$ is
surjective. 

Under the geometric product identification, the pullback
of $s_j$ restricts to the section
\[
 s_{j,V}^{\mathrm{geom}}:
 \textcolor{black}{\pi_1}(V_{\bar k})
 \longrightarrow
 \Delta\times\textcolor{black}{\pi_1}(V_{\bar k}),
 \qquad
 \textcolor{black}{\gamma_{V,\mathrm{geom}}}\longmapsto\bigl((\pr_j)_*(\textcolor{black}{\gamma_{V,\mathrm{geom}}}),\textcolor{black}{\gamma_{V,\mathrm{geom}}}\bigr)
\]
of the second projection.
Its conjugation action on the kernel $\Delta$ is therefore
\[
 d\longmapsto
 (\pr_j)_*(\textcolor{black}{\gamma_{V,\mathrm{geom}}})\,d\,(\pr_j)_*(\textcolor{black}{\gamma_{V,\mathrm{geom}}})^{-1}
 \qquad(d\in\Delta).
\]
Since $(\pr_j)_*$ is surjective and $D$ is the image of
$\textcolor{black}{\pi_1}(V_{\bar k})$ in $\Gamma$, the group
$\alpha_j(D)$ contains every inner automorphism of $\Delta$.
\end{proof}

\section{Proofs of the main theorems and consequences}
\label{sec:main-proofs}

\textcolor{black}{\textcolor{black}{We prove the main section classifications
and discuss their consequences for the generic curve.} Let $k/\Q$ be finitely generated,
$g\geq3$, and $n\geq1$, and retain the section conventions
of Section~\ref{sec:conventions}. Put $K=k(\M)$ and
$X_K=\cX\times_{\M}\Spec K$, and write $s_{j,K}$ for the
restriction of the $j$th tautological section.}

\subsection{Geometrically pro-\texorpdfstring{$\ell$}{ell} sections}
\label{subsec:main-proell}

\begin{theorem}\label{thm:proell-classification}
For every prime $\ell$, put $N=\Delta^{(\ell)}$.
Then
\[
 \Sec_{\M}(N)=\{[s_1],\ldots,[s_n]\},
 \qquad
 \Sec_K(N)=\{[s_{1,K}],\ldots,[s_{n,K}]\}.
\]
Each set consists of $n$ distinct classes, and restriction
to the generic point is a bijection carrying $[s_j]$ to
$[s_{j,K}]$.
\end{theorem}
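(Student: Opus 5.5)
The plan is to combine the unipotent classification with the singleton comparison fiber, and then descend to the generic point. Let $s:\Gamma\to E_\ell$ be a continuous section. Its image $\Phi_{\M}([s])$ lies in $H^1_{\nab}(\Gamma,\cP)(\Ql)=\Sec_{\M}(P)$. By Proposition~\ref{prop:unipotent-section-classes}, this set is $\{[s_{1,P}],\ldots,[s_{n,P}]\}$, so $\Phi_{\M}([s])=[s_{j,P}]$ for some $j$. Proposition~\ref{prop:singleton} then shows that $s$ is $N$-conjugate to $s_j$. Hence $\Sec_{\M}(N)=\{[s_1],\ldots,[s_n]\}$.

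Next we check that these classes are distinct. If $[s_i]=[s_j]$ in $\Sec_{\M}(N)$, applying $\Phi_{\M}$ gives $[s_{i,P}]=[s_{j,P}]$. Proposition~\ref{prop:unipotent-section-classes} asserts that these $n$ unipotent classes are distinct, so $i=j$. As a cross-check, distinctness is already visible at truncation level $4$ by Lemma~\ref{lem:initial-unipotent-sections}, where the weight-$-1$ maps differ. So $\Sec_{\M}(N)$ has exactly $n$ elements.

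For the generic point, Proposition~\ref{prop:stack-descent} (with $g\ge 3$) shows that restriction $\Sec_{\M}(N)\to\Sec_K(N)$ is a bijection. By construction, restriction sends $[s_j]$ to $[s_{j,K}]$, the class of the section pulled back along $G_K\to\Gamma$ induced by the $j$th marked point. Therefore $\Sec_K(N)=\{[s_{1,K}],\ldots,[s_{n,K}]\}$, and these are $n$ distinct classes because the restriction map is injective.

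There is no real obstacle left at this stage, since all the substantive input has been established earlier. The only points needing care are bookkeeping. First, the reference sections $s_j$ used in Proposition~\ref{prop:singleton} must be the same representatives whose $\Phi$-images are the $s_{j,P}$ of Proposition~\ref{prop:unipotent-section-classes}; this holds by the definition $s_{j,P}=\Phi\circ s_j$. Second, the basepoint conventions used in descent must match those of Section~\ref{sec:conventions}, so that the restricted tautological sections are exactly the $s_{j,K}$.
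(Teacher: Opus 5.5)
Your proposal is correct and takes essentially the same route as the paper's proof. Proposition~\ref{prop:unipotent-section-classes} together with the singleton fiber of Proposition~\ref{prop:singleton} gives the classification over $\M$, with distinctness detected through $\Phi_{\M}$, and Proposition~\ref{prop:stack-descent} transports that classification to the generic point $K$.
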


\begin{proof}
\textcolor{black}{
Proposition~\ref{prop:unipotent-section-classes} gives
exactly $n$ distinct unipotent section classes, and
Proposition~\ref{prop:singleton} shows that the fiber
over each consists of the corresponding tautological
pro-$\ell$ section class. This proves the classification
over $\M$.
By Proposition~\ref{prop:stack-descent}, restriction
gives a bijection
$\Sec_{\M}(N)\xrightarrow{\sim}\Sec_K(N)$.
Since it sends $[s_j]$ to $[s_{j,K}]$, the generic
classification and the asserted compatibility follow.
}
\end{proof}

\subsection{Full profinite sections}
\label{subsec:main-profinite}

\begingroup\color{black}
\begin{theorem}\label{thm:profinite-classification}
We have
\[
 \Sec_{\M}(\Delta)=\{[s_1],\ldots,[s_n]\},
 \qquad
 \Sec_K(\Delta)=\{[s_{1,K}],\ldots,[s_{n,K}]\}.
\]
Each set consists of $n$ distinct classes, and restriction
to the generic point is a bijection carrying $[s_j]$ to
$[s_{j,K}]$.
\end{theorem}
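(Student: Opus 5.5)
Fix a section $s:\Gamma\to E$. By Lemma~\ref{lem:collision-extension} and Proposition~\ref{prop:coordinate-full}, the pullback $s_V$ along $\varphi$ is described by $f=\Ad_{w^{-1}}\circ(\pr_j)_*$ for a single index $j$ and an element $w\in\Delta$. The plan is to show that $s':=w\,s\,w^{-1}$ agrees with $s_j$ on all of $\Gamma$, and not merely after pullback to $V$. Replacing $s$ by $s'$, we may assume that $s$ and $s_j$ agree on $\varphi_*(\pi_1(V))$. In particular they agree on $D$, since $D$ is the image of $\pi_1(V_{\bar k})$.

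Next, form the difference cocycle $z(\gamma)=s(\gamma)s_j(\gamma)^{-1}\in\Delta$. It satisfies $z(\gamma\gamma')=z(\gamma)\alpha_j(\gamma)(z(\gamma'))$ and $z|_D=1$. For $d\in D$ and $\gamma\in\Gamma$, normality of $D$ (Lemma~\ref{lem:configuration-kernel}) gives $\gamma d\gamma^{-1}\in D$. Expanding $z(\gamma d)=z(\gamma d\gamma^{-1}\cdot\gamma)$ in two ways gives
\[
 z(\gamma)=z(\gamma d)=z(\gamma)\,\alpha_j(\gamma)\bigl(z(d)\bigr)=z(\gamma),
\]
which is consistent but not informative by itself. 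The informative computation compares $z(d\gamma)=z(\gamma)$ with $z(d\gamma)=\alpha_j(d)(z(\gamma))$. This yields $\alpha_j(d)(z(\gamma))=z(\gamma)$ for every $d\in D$. By Lemma~\ref{lem:configuration-kernel}, $\alpha_j(D)$ contains every inner automorphism of $\Delta$, so $z(\gamma)$ is central in $\Delta$. Lemma~\ref{lem:profinite-center} then forces $z(\gamma)=1$. Hence $s=s_j$ after conjugation by $w$, and every class in $\Sec_{\M}(\Delta)$ is tautological.

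Distinctness of the $n$ classes follows by mapping to $\Sec_{\M}(N)$, where Theorem~\ref{thm:proell-classification} shows that the images are distinct. Proposition~\ref{prop:stack-descent} then transfers both the classification and the distinctness to $\Sec_K(\Delta)$, and restriction carries $[s_j]$ to $[s_{j,K}]$.

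The main obstacle is bookkeeping of basepoints. One must check that the conjugator $w$ from Proposition~\ref{prop:coordinate-full}, obtained on $\pi_1(C^n)$ through the product identification \eqref{eq:full-configuration-pullback}, really conjugates the graph of $s_V$ into that of $s_{j,V}$ inside $E\times_\Gamma\pi_1(V)$. This amounts to conjugation by $(w,1)$ in the kernel $\Delta\times\{1\}$. One must also check that this is the same $\Delta$-conjugation in $E$, so that $z$ genuinely vanishes on $\varphi_*(\pi_1(V))\supseteq D$. I would verify this directly from the compatible-path choices of Subsection~\ref{subsec:simultaneous-markings}, exactly as was done for the pro-$\ell$ quotient in the proof of Lemma~\ref{lem:diagonal-auto}.
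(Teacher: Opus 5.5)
Your proposal is correct and follows the paper's proof essentially step for step. You conjugate by the $w$ from Proposition~\ref{prop:coordinate-full} so that $s$ and $s_j$ agree on $\operatorname{im}(\varphi_*)\supseteq D$, then use normality of $D$ and the cocycle identity to show that each $z(\gamma)$ is fixed by $\alpha_j(D)$, hence central and trivial, and finally obtain distinctness from the pro-$\ell$ theorem and transfer to $K$ by Proposition~\ref{prop:stack-descent}. Your equality $z(d\gamma)=z(\gamma)$ needs exactly the paper's justification: write $d\gamma=\gamma(\gamma^{-1}d\gamma)$ with $\gamma^{-1}d\gamma\in D$.
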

\begin{proof}
Let $s:\Gamma\to E$ be a section, and apply the construction
of Subsection~\ref{subsec:simultaneous-markings} to $s$.
Retain the notation $C,V,\varphi,\iota,f$ from that
construction and $D$ from Lemma~\ref{lem:configuration-kernel}.
Write $\varphi_*:\pi_1(V)\to\Gamma$ for the homomorphism
induced by $\varphi:V\to \M$.
The kernel of the pulled-back extension
$E\times_\Gamma\textcolor{black}{\pi_1}(V)\to\textcolor{black}{\pi_1}(V)$
is canonically identified with $\Delta$ via
$d\mapsto(d,1)$.
Under the identification
\eqref{eq:full-configuration-pullback}, the pullbacks
of $s$ and $s_j$ are given by
\[
 s_V(\textcolor{black}{\gamma_V})=\bigl(f(\iota_*(\textcolor{black}{\gamma_V})),\textcolor{black}{\gamma_V}\bigr),
 \qquad
 s_{j,V}(\textcolor{black}{\gamma_V})
 =\bigl((\pr_j)_*(\iota_*(\textcolor{black}{\gamma_V})),\textcolor{black}{\gamma_V}\bigr),
\]
where $\iota:V\hookrightarrow C^n$.
Applying Proposition~\ref{prop:coordinate-full} to
$\iota_*(\textcolor{black}{\gamma_V})$ gives an index $j$ and an element
$\textcolor{black}{w}\in\Delta$ such that
\[
 s_V(\textcolor{black}{\gamma_V})=(\textcolor{black}{w},1)^{-1}s_{j,V}(\textcolor{black}{\gamma_V})(\textcolor{black}{w},1)
 \qquad\bigl(\textcolor{black}{\gamma_V}\in\textcolor{black}{\pi_1}(V)\bigr).
\]
Since \eqref{eq:full-configuration-pullback} is the
identity on the common kernel $\Delta$, projection
from $E\times_\Gamma\textcolor{black}{\pi_1}(V)$ to $E$ gives
\[
 s(\varphi_*(\textcolor{black}{\gamma_V}))
 =\textcolor{black}{w}^{-1}s_j(\varphi_*(\textcolor{black}{\gamma_V}))\textcolor{black}{w}
 \qquad\bigl(\textcolor{black}{\gamma_V}\in\textcolor{black}{\pi_1}(V)\bigr).
\]
Replacing $s$ by the conjugate section
$\gamma\mapsto \textcolor{black}{w}\,s(\gamma)\textcolor{black}{w}^{-1}$, we may therefore
assume that $s$ and $s_j$ agree on
$\operatorname{im}(\varphi_*)$, and hence on $D$.

The difference
\[
 z(\gamma)=s(\gamma)s_j(\gamma)^{-1}\in\Delta
\]
is a cocycle for $\alpha_j$ and is trivial on $D$.
For $\gamma\in\Gamma$ and $\textcolor{black}{\gamma_0}\in D$, the cocycle identity
and $z(\textcolor{black}{\gamma_0})=1$ give
\[
 z(\gamma\textcolor{black}{\gamma_0})
 =z(\gamma)\,\alpha_j(\gamma)\bigl(z(\textcolor{black}{\gamma_0})\bigr)
 =z(\gamma).
\]
Put $\textcolor{black}{\gamma'_0}=\gamma\textcolor{black}{\gamma_0}\gamma^{-1}$.
By normality, $\textcolor{black}{\gamma'_0}\in D$, so $z(\textcolor{black}{\gamma'_0})=1$.
Since $\textcolor{black}{\gamma'_0}\gamma=\gamma\textcolor{black}{\gamma_0}$, we obtain
\[
 z(\gamma)
 =z(\textcolor{black}{\gamma'_0}\gamma)
 =z(\textcolor{black}{\gamma'_0})\,\alpha_j(\textcolor{black}{\gamma'_0})\bigl(z(\gamma)\bigr)
 =\alpha_j(\textcolor{black}{\gamma'_0})\bigl(z(\gamma)\bigr).
\]
As $\textcolor{black}{\gamma_0}$ varies over $D$, so does $\textcolor{black}{\gamma'_0}$.
Thus $z(\gamma)$
is fixed by every automorphism in $\alpha_j(D)$.
By Lemma~\ref{lem:configuration-kernel}, this includes
every inner automorphism of $\Delta$, so
\[
 \textcolor{black}{d}\,z(\gamma)\textcolor{black}{d}^{-1}=z(\gamma)
 \qquad(\textcolor{black}{d}\in\Delta).
\]
Hence $z(\gamma)$ commutes with every element of $\Delta$.
By Lemma~\ref{lem:profinite-center}, $z(\gamma)=1$.
Hence the conjugated section is $s_j$.

The tautological classes are distinct because their images
in any geometrically pro-$\ell$ quotient are distinct by
Theorem~\ref{thm:proell-classification}.
This proves the classification over $\M$.
By Proposition~\ref{prop:stack-descent}, restriction gives
a bijection
$\Sec_{\M}(\Delta)\xrightarrow{\sim}\Sec_K(\Delta)$
carrying $[s_j]$ to $[s_{j,K}]$.
The classification over $K$ and the asserted compatibility
follow.
\end{proof}
\endgroup
For every prime $\ell$, restriction and passage to the
geometrically pro-$\ell$ quotient preserve the tautological
index, so Theorems~\ref{thm:proell-classification}
and~\ref{thm:profinite-classification} show that the diagram
in the introduction is a commutative square of bijections.

\subsection{The section conjecture for the generic curve}
\label{subsec:generic-section-conjecture}

\begin{corollary}\label{cor:generic-section-conjecture}
For every prime $\ell$, put $N=\Delta^{(\ell)}$.
The natural maps
\[
 X_K(K)\longrightarrow\Sec_K(\Delta)
 \longrightarrow\Sec_K(N)
\]
are bijections. In particular, Grothendieck's section
conjecture holds for $X_K/K$.
\end{corollary}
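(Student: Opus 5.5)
The plan is to combine Hain's determination of the rational points with the two classification theorems just proved. First, by Hain's theorem \cite[Thm.~1, p.~711]{HainRational}, since $g\geq3$ we have $X_K(K)=\{x_1,\ldots,x_n\}$, and these $n$ points are distinct because the markings are pairwise disjoint. The map $X_K(K)\to\Sec_K(\Delta)$ sends a rational point to the class of the section it induces, as described in Subsection~\ref{subsec:section-classes}. The point $x_j$ is the restriction to $\Spec K$ of the tautological section $s_j$, so its induced section is $s_{j,K}$ up to $\Delta$-conjugacy; here one only has to check that restricting the induced section of $\cX\to\M$ along $G_K\to\Gamma$ agrees with the section induced by the point $x_j$ of $X_K$, which holds by functoriality of the homotopy sequence with compatible basepoints. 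Hence the first map is $x_j\mapsto[s_{j,K}]$.

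Next, Theorem~\ref{thm:profinite-classification} says $\Sec_K(\Delta)$ consists of exactly the $n$ distinct classes $[s_{1,K}],\ldots,[s_{n,K}]$. So the first map is a surjection from an $n$-element set onto an $n$-element set, hence a bijection. Similarly, the second map is induced by $E\to E_\ell$ and sends $[s_{j,K}]$ in $\Sec_K(\Delta)$ to the pro-$\ell$ class $[s_{j,K}]$. Theorem~\ref{thm:proell-classification} says $\Sec_K(N)$ consists of the $n$ distinct classes $[s_{j,K}]$, so this map is a bijection as well. The composite is then a bijection for every $\ell$, and the first bijection is precisely the statement of Grothendieck's section conjecture for $X_K/K$, since $K$ is finitely generated over $\Q$.

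There is no real obstacle; the only delicate point is the bookkeeping in the first paragraph. One has to make sure that the section induced by the rational point $x_j$ over $K$, with basepoints chosen as in Section~\ref{sec:conventions}, is $\Delta$-conjugate to the pullback of $s_j$, and that the pulled-back extension over $G_K$ really is the arithmetic extension of $X_K/K$, as in \eqref{eq:descent-generic-pullback}. I would dispatch both by the path-change remark in Subsection~\ref{subsec:section-classes}: changing paths only alters sections by geometric conjugacy.
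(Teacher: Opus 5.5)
Your proposal is correct and follows the paper's proof essentially verbatim: you use Hain's theorem to get $X_K(K)=\{x_1,\ldots,x_n\}$, observe that both maps send $x_j$ to $[s_{j,K}]$, and conclude from Theorems~\ref{thm:profinite-classification} and~\ref{thm:proell-classification} that each section set consists of exactly these $n$ distinct classes. Your extra remarks on distinctness of the markings and on compatibility of induced sections with restriction (via the path-change convention and \eqref{eq:descent-generic-pullback}) make explicit bookkeeping that the paper leaves implicit.
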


\begin{proof}
\textcolor{black}{Hain's theorem
\textcolor{black}{\cite[\textcolor{black}{Thm.}~1, p.~711]{HainRational}}
gives $X_K(K)=\{x_1,\ldots,x_n\}$,
where $x_j$ is the $j$th marking. The displayed maps send
$x_j$ to $[s_{j,K}]$ in each section set.
Theorems~\ref{thm:profinite-classification}
and~\ref{thm:proell-classification} show that these are exactly
the $n$ distinct classes, proving both bijections.}
\end{proof}

\begin{remark}[\textcolor{black}{Topologically ample families}]\label{rem:ample-families}
\textcolor{black}{Assume $g\geq3$ and $n\geq1$.
Let $\cM_{g,n/k}[m]$ be the geometrically connected
full symplectic level-$m$ cover, where
$m\geq1$ and $\mu_m\subset k$, with
$\cM_{g,n/k}[1]=\M$.
The section classifications above, including their generic-fiber
forms, extend to its universal curve and to its pullback along
a morphism $\phi:T\longrightarrow\cM_{g,n/k}[m]$ from a smooth geometrically connected
$k$-variety inducing an isomorphism}
{\color{black}
\[
 \phi_*:\pi_1(T_{\bar k})
 \xrightarrow{\sim}
 \pi_1(\cM_{g,n/\bar k}[m]).
\]
}
\textcolor{black}{In particular, this applies to the generic ample linear
sections of dimension at least two in the automorphism-free
locus of $\cM_{g,n/k}[m]$ \cite[\textcolor{black}{Prop.}~11.3]{HainRational}.
The families over these bases are examples of Hain's topologically
ample families \cite[\textcolor{black}{Def.}~11.1]{HainRational}.}

\textcolor{black}{Indeed, Hain's level-independence results
\cite[\textcolor{black}{Prop.}~6.1, \textcolor{black}{Thm.}~6.3, and \textcolor{black}{Prop.}~8.2]{HainRational}
identify the geometric and arithmetic completions; the curve
extensions have the same kernel $\cP$, giving the same unipotent
section classes. Moving markings preserves the level structure,
so Sections~\ref{sec:moving} and~\ref{sec:full} apply to $\cM_{g,n/k}[m]$.
The fixed curve with level structure may be chosen after a finite
extension of $k$; normality of its geometric configuration subgroup
\textcolor{black}{and the cocycle argument in the proof of
Theorem~\ref{thm:profinite-classification} give}
conjugacy over $k$. For $T$, the arithmetic and homotopy exact
sequences identify the curve extensions with those over $\cM_{g,n/k}[m]$.
The descent results of Section~\ref{sec:descent} give the generic-fiber
classifications. This assertion concerns the families specified
here, rather than every topologically ample family.}
\end{remark}

\subsection*{AI disclosure}
The authors used OpenAI's ChatGPT to assist with literature
searches, aid their mathematical review,
refine notation, and improve the exposition of their original
mathematical ideas and arguments.
The authors independently reviewed all AI-assisted suggestions
and take full responsibility for the content of the paper.
\bibliographystyle{abbrvnat}
\bibliography{references}
\end{document}